\theoremstyle{plain}
\newtheorem{thm}{Theorem}[section]
\newtheorem{lem}{Lemma}[section]
\newtheorem*{prob}{Problem C}
\newtheorem*{probI}{Problem~A}
\newtheorem*{probII}{Problem~B}
\theoremstyle{remark}
\newtheorem{rem}{Remark}[section]
\numberwithin{equation}{section}
\newcommand\<{\langle}
\renewcommand\>{\rangle}
\begin{document}

\title[Generalized Logan's problems]
{
Uncertainty principles for eventually constant sign bandlimited functions
}

\author{D.~V.~Gorbachev}
\address{D.~Gorbachev, Tula State University,
Department of Applied Mathematics and Computer Science,
300012 Tula, Russia}
\email{dvgmail@mail.ru}

\author{V.~I.~Ivanov}
\address{V.~Ivanov, Tula State University,
Department of Applied Mathematics and Computer Science,
300012 Tula, Russia}
\email{ivaleryi@mail.ru}

\author{S.~Yu.~Tikhonov}
\address{S.~Tikhonov,
Centre de Recerca Matem\`atica, Campus de Bellaterra, Edifici C 08193
Bellaterra, Barcelona, Spain; ICREA, Pg. Llu\'is Companys 23, 08010 Barcelona,
Spain, and Universitat Aut\'onoma de Barcelona}
\email{stikhonov@crm.cat}

\date{\today}

\keywords{Logan's problem, positive definite functions, bandlimited
functions, the uncertainty principle, Hankel transform}

\subjclass{42A82, 42A38}

\thanks{The work of D.\,V.~Gorbachev and V.\,I.~Ivanov is  supported  by  the  Russian  Science  Foundation  under  grant 18-11-00199 and performed  in Tula State University.
S.\,Yu.~Tikhonov was partially supported by  MTM 2017-87409-P, 2017 SGR 358, and
 the CERCA Programme of the Generalitat de Catalunya.}

\begin{abstract}
%
%
%
We study the uncertainty principles related to the generalized Logan problem in $\mathbb{R}^{d}$.
Our main result provides the complete solution of the following problem:
for a fixed $m\in \mathbb{Z}_{+}$,  find
\[
\sup\{|x|\colon (-1)^{m}f(x)>0\}\cdot
\sup \{|x|\colon x\in \mathrm{supp}\,\widehat{f}\,\}\to \inf,
\]
where the infimum is taken over all nontrivial positive definite
 bandlimited functions such that $\int_{\mathbb{R}^d}|x|^{2k}f(x)\,dx=0$ for
$k=0,\dots,m-1$ if $m\ge 1$.

We also obtain the uncertainty principle for bandlimited functions related to
the recent result by Bourgain, Clozel, and Kahane.



\end{abstract}

\maketitle
\section{Introduction}

\subsection{Logan's problems.}
Logan stated and proved \cite{Lo83a,Lo83b} the following two extremal problems for real-valued positive definite
bandlimited
functions on $\mathbb{R}$. Since such functions are even, we state these problems {for functions on $\mathbb{R}_+=[0,\infty)$}.

\begin{probI}
Find the smallest $\lambda_0>0$ 
 such that $$f(x)\le 0,\quad x>\lambda_0,$$
where $f$ is a positive definite function {of exponential type at most}~$1$ satisfying
\begin{equation}\label{eq1}
f(x)=\int_{0}^{1}\cos xt\,d\mu(t),\quad d\mu\ge 0,\quad f(0)=1.
\end{equation}

\end{probI}

Logan showed that admissible functions are integrable (even if the measure
$d\mu$ is nonnegative in a neighborhood of the origin), $\lambda_0=\pi$, and
the unique extremizer is
\[
f_0(x)=\frac{\cos^2(x/2)}{1-x^2/\pi^2}=
\frac{\pi}{2}\int_{0}^{1}\sin\pi t\cos xt\,dt,
\]
{Note that  $f_0$ satisfies}
  $\int_{\mathbb{R}_{+}}f_0(x)\,dx=0$.

\begin{probII}
Find the smallest $\lambda_1>0$ such that
  $$f(x)\ge 0,\quad x>\lambda_1,$$
where $f$ is a positive definite integrable function satisfying \eqref{eq1} and having mean value~zero.
\end{probII}

It turns out that admissible functions are integrable with respect to the weight $x^2$, and $\lambda_1=3\pi$. Moreover,
the unique extremizer is
\[
f_1(x)=\frac{\cos^2(x/2)}{(1-x^2/\pi^2)(1-x^2/(3\pi)^2)}=
\frac{3\pi}{4}\int_{0}^{1}(\sin\pi t)^3\cos xt\,dt,
\]
This function satisfies  $\int_{\mathbb{R}_{+}}x^{2}f_1(x)\,dx=0$.


We will study the multivariate generalization of Logan's problems for the
Fourier transforms. In more detail, we consider the $m$-parameter problem,
$m\in \mathbb{Z}_{+}=\{0,1,\ldots\}$, so that, for $d=1$, if $m=0,\,1$ we recover Problems~A and B
respectively.

Let $d\in \mathbb{N}$ and $\mathbb{R}^{d}$ is $d$-dimensional Euclidean space
with the scalar product $\<x,y\>=x_{1}y_{1}+\dots+x_{d}y_{d}$ and the norm
$|x|=\<x,x\>^{1/2}$. Let $B_{\tau}^{d}=\{x\in \mathbb{R}^d\colon |x|\le\tau\}$
be the ball of radius $\tau>0$. Let $Q=\mathbb{R}^d$ or $Q=\mathbb{R}_+.$ As
usual, for a positive measure space $(Q,d\rho)$, let $L^{p}(Q,d\rho)$ denote
the space of measurable functions such that
$\|f\|_{p,d\rho}=\bigl(\int_{Q}|f(x)|^p\,d\rho(x)\bigr)^{1/p}<\infty$,
$L^{\infty}(Q)$ be the space of
the essentially bounded measurable functions,
and $C(Q)$ consists of continuous functions on $Q$.
The Fourier transform of $f$ is given by
\[
\widehat{f}\,(y)=
(2\pi)^{-d/2}\int_{\mathbb{R}^d}f(x)e^{-i\<x,y\>}\,dx,\quad y\in
\mathbb{R}^{d}.
\]
A function $f$ defined on $\mathbb{R}^{d}$ is positive-definite if {for each $N$}
\[
\sum_{i,j=1}^Nc_i\overline{c_j}\,f(x_i-x_j)\ge 0,\quad \forall\,c_1,\dots,c_N\in
\mathbb{C},\quad \forall\,x_1,\dots,x_N\in \mathbb{R}^{d}.
\]
Recall that for a continuous function $f$, by Bochner's theorem, $f$ is positive definite if and only~if
\begin{equation}\label{mu-repr}
f(x)=\int_{\mathbb{R}^d}e^{i\<x,y\>}\,d\mu(x),
\end{equation}
where $\mu$ is a finite positive Borel measure (see, e.g.,
\cite[9.2.8]{Ed79}). In particular, if $f\in L^{1}(\mathbb{R}^{d})$, then
$d\mu(x)=(2\pi)^{-d/2}\widehat{f}\,(x)\,dx$ and $\widehat{f}\ge 0$.

 {In this paper we deal with continuous even functions $f\colon \mathbb{R}^{d}\to \mathbb{R}$, which are
constant sign outside of a ball $B_{\lambda}^{d}$. Denote by $\lambda(f)$ the smallest radius of a ball such that
$f$ is non-positive outside of this ball,
that is,}
\[
\lambda(f)=\sup\{|x|\colon f(x)>0\}.
\]
Thus, functions with
$\lambda(-f)<\infty$ are eventually nonnegative.

A function $f$ is bandlimited if the distributional Fourier transform
$\widehat{f}\,$ (or the measure~$\mu$ in (\ref{mu-repr})) has a compact support. 
Let
\[
\tau(f)=\sup \{|x|\colon x\in \mathrm{supp}\,\widehat{f}\,\}.
\]
By the Paley--Wiener--Schwarz theorem, bandlimited functions $f$ are
restrictions of complex-valued entire functions of spherical exponential type
$\tau(f)$ to $\mathbb{R}^{d}$ (see, e.g.,~\cite{NW78}).


As in the original Logan's problems we are interested in the smallest value of $\lambda(\pm f)$ for continuous positive definite functions
 $f$ with finite type $\tau(f)$. We also assume that the following orthogonality condition holds:
\[
\int_{\mathbb{R}^d}|x|^{2k}f(x)\,dx=0,\quad k=0,\dots,k_{m},\quad f\in
L^{1}(\mathbb{R}^{d},|x|^{2k_{m}}\,dx),
\]
for some integer $k_{m}$, 
  cf.\ the condition in Problem~B.
This condition is equivalent~to
\[
\Delta^{k}\widehat{f}\,(0)=0,\quad k=0,\dots,k_{m},
\]
where $\Delta$ is the Laplace operator, $\Delta^{0}=\mathrm{Id}$.

{One of the main goals} of 
  this paper is to solve the following
\begin{prob}
For $d\in \mathbb{N}$ and $m\in \mathbb{Z}_{+}$, find
\[
\inf \lambda((-1)^{m}f)\tau(f),
\]
where 
the infimum is taken over all nontrivial {continuous positive definite bandlimited functions on}
$\mathbb{R}^{d}$ {such that additionally if $m\ge 1$,
$f\in L^{1}(\mathbb{R}^{d},|x|^{2m-2}\,dx)$ and
$\Delta^{k}\widehat{f}\,(0)=0$, $k=0,\dots,m-1$.}
\end{prob}

{It is worth mentioning that  admissible functions  in problem C
as well as the expression
$\lambda(\pm f)\tau(f)$ are invariant with respect to the dilation
$f_{a}(x)=f(ax)$, $a>0$, since $\lambda(\pm f_{a})=a^{-1}\lambda(\pm f)$ and
$\tau(f_{a})=a\tau(f)$. Note that in Problems A and B we have $\tau(f)=1$.}


Problem C has various  applications, in particular, to investigate Odlyzko's question on zeros of the Dedekind zeta function (see \cite{Lo83b} and  \cite[Sec. 4]{BCK10}). For
 $m={0}$ 
it plays an important role
in several extremal problems in approximation theory (see, e.g.,
\cite{Be99, Go00}).

To formulate our main result, for $\alpha\ge -1/2$ we introduce the even entire function of exponential type~2
\begin{equation}\label{def-fam}
f_{\alpha,m}(t)=\frac{j_{\alpha}^2(t)}{(1-t^2/q_{\alpha,1}^2)\cdots
(1-t^2/q_{\alpha,m+1}^2)},\quad t\in \mathbb{R}_{+},
\end{equation}
where $j_{\alpha}(t)=\Gamma(\alpha+1)(2/t)^{\alpha}J_{\alpha}(t)$ is the
normalized Bessel function and $q_{\alpha,1}<q_{\alpha,2}<\cdots$ are positive
zeros of $J_{\alpha}$.

\begin{thm}\label{thm-F}
For $d\in \mathbb{N}$ and $m\in \mathbb{Z}_{+}$, we have
\[
\inf \lambda((-1)^{m}f)\tau(f)=2q_{d/2-1,m+1},
\]
where the infimum is taken over all admissible  functions in Problem C.
The  function $f_{d/2-1,m}(|x|)$ is
  the unique extremizer up to a positive constant.
{Moreover,
this function satisfies $\Delta^{m}\widehat{f}(0)=0$.}
\end{thm}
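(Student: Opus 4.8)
The plan is to reduce the $d$-dimensional problem to a one-dimensional weighted problem for the Hankel transform and then run a Gaussian-quadrature argument. First I would symmetrize: replacing an admissible $f$ by its average over $O(d)$ preserves positive definiteness, the band limit $\tau(f)$, and all the moment conditions, while it can only decrease $\lambda((-1)^m f)$ (an average of functions that are $\le 0$ on $\{|x|>\lambda\}$ is again $\le 0$ there). Hence it suffices to treat radial $f(x)=f_0(|x|)$, for which $\widehat f$ is governed by the Hankel transform of order $\alpha=d/2-1$, positive definiteness becomes nonnegativity of that transform, and $\int_{\mathbb R^d}|x|^{2k}f\,dx$ is a constant multiple of $\int_0^\infty t^{2k}f_0(t)\,t^{2\alpha+1}\,dt$. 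By the dilation invariance noted after Problem~C I normalize $\tau(f)=2$, so the target becomes $\lambda((-1)^m f)\ge q_{\alpha,m+1}$. The main tool is the Gauss-type quadrature formula for even entire functions $g$ of exponential type $\le 2$ integrable against $t^{2\alpha+1}\,dt$, namely $\int_0^\infty g(t)\,t^{2\alpha+1}\,dt=\sum_{k\ge1}\gamma_k\,g(q_{\alpha,k})$ with strictly positive weights $\gamma_k$ and nodes at the zeros $q_{\alpha,k}$ of $J_\alpha$.

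For the upper bound I would check that $f_{d/2-1,m}(|x|)$ is admissible and attains the value. It is even, entire of exponential type $2$, and the sign analysis of \eqref{def-fam} — numerator $j_\alpha^2\ge0$, denominator a product of $m+1$ factors changing sign at $q_{\alpha,1},\dots,q_{\alpha,m+1}$ — shows that $f_{\alpha,m}$ has a simple zero at each of $q_{\alpha,1},\dots,q_{\alpha,m+1}$, double zeros at the remaining $q_{\alpha,k}$, and sign $(-1)^{m+1}$ for $t>q_{\alpha,m+1}$; thus $\lambda((-1)^m f_{\alpha,m})=q_{\alpha,m+1}$ and the product equals $2q_{\alpha,m+1}$. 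Positive definiteness is the only delicate point: I would expand $f_{\alpha,m}$ in partial fractions as $\sum_{j=1}^{m+1} A_j\, j_\alpha^2(t)/(1-t^2/q_{\alpha,j}^2)$ and use that each single-node kernel $j_\alpha^2(t)/(1-t^2/q_{\alpha,j}^2)$ has a nonnegative Hankel transform while the coefficients $A_j$ coincide, up to a positive factor, with the quadrature weights $\gamma_j>0$. Finally, since $f_{\alpha,m}$ vanishes at every node, applying the quadrature to $t^{2k}f_{\alpha,m}$ (which is in the admissible class for $k\le m$) gives $\int_0^\infty t^{2k}f_{\alpha,m}\,t^{2\alpha+1}\,dt=0$ for all $k\le m$; this yields both the required conditions $\Delta^k\widehat f(0)=0$, $k\le m-1$, and the extra identity $\Delta^m\widehat f(0)=0$.

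For the lower bound I would argue by contradiction, assuming $(-1)^m f(t)\le0$ for all $t\ge\lambda'$ with $\lambda'<q_{\alpha,m+1}$. Applying the quadrature to $f(t)p(t)$ for an arbitrary even polynomial $p$ of degree $\le 2m-2$, the left-hand side is a combination of the vanishing moments $\int t^{2j}f\,t^{2\alpha+1}$, $j\le m-1$, hence zero, so $\sum_{k\ge1}\gamma_k f(q_{\alpha,k})p(q_{\alpha,k})=0$ for all such $p$. Next, testing against $p_0(t)=\prod_{j=1}^m(t^2-q_{\alpha,j}^2)$, the left-hand side reduces to the top moment $\int_0^\infty t^{2m}f\,t^{2\alpha+1}\,dt$, whose sign is $(-1)^m$ times a nonnegative quantity because $\widehat f\ge0$ vanishes to order $\ge 2m$ at the origin (so $\Delta^m\widehat f(0)\ge0$); on the right, all surviving nodes $q_{\alpha,k}$, $k\ge m+1$, exceed $\lambda'$, so $(-1)^m f(q_{\alpha,k})\le0$ while $p_0(q_{\alpha,k})>0$. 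Comparing signs forces every term to vanish, i.e.\ $f(q_{\alpha,k})=0$ for $k\ge m+1$ and $\Delta^m\widehat f(0)=0$; feeding this back into the degree-$\le2m-2$ relations, a Vandermonde argument in $q_{\alpha,1}^2,\dots,q_{\alpha,m}^2$ forces $f(q_{\alpha,k})=0$ for $k\le m$ as well. Thus $f$ vanishes at every node, and since each $q_{\alpha,k}$ with $k\ge m+1$ is an interior maximum of $(-1)^m f$, these are in fact double zeros.

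The hardest part is the rigidity step that turns this zero pattern into a contradiction (and simultaneously yields uniqueness). With double zeros at all $q_{\alpha,k}$, $k\ge m+1$, and at least simple zeros at $q_{\alpha,1},\dots,q_{\alpha,m}$, the quotient $g(t)=f(t)\prod_{j=1}^m(1-t^2/q_{\alpha,j}^2)/j_\alpha^2(t)$ is entire, even, and of exponential type $0$ with polynomial growth on $\mathbb R$, hence a polynomial; this expresses $f$ as $f_{\alpha,m-1}\cdot g$, and I would then use positive definiteness together with the already-established vanishing of the moments up to order $m$ to conclude $f\equiv0$, contradicting nontriviality and so proving $\lambda((-1)^m f)\ge q_{\alpha,m+1}$. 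The same analysis applied to the equality case $\lambda((-1)^m f)=q_{\alpha,m+1}$, where $q_{\alpha,m+1}$ is now only a simple zero, identifies $f$ with a positive multiple of $f_{\alpha,m}$, giving uniqueness. I expect the two genuine obstacles to be (i) the positivity of the single-node kernels and of the partial-fraction coefficients underlying the positive definiteness of $f_{\alpha,m}$, and (ii) the growth/zero-counting input for the rigidity step, which rests on the theory of entire functions of exponential type (Paley--Wiener--Bessel / de Branges spaces); a minor technical point is justifying the top-moment identity under the weaker hypothesis $f\in L^1(\mathbb R^d,|x|^{2m-2}\,dx)$, which I would handle by a regularization/limiting argument.
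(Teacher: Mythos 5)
Your overall architecture parallels the paper's proof of Theorem~\ref{thm-H}: spherical symmetrization to reduce to the Hankel transform of order $\alpha=d/2-1$, Gauss quadrature at the nodes $q_{\alpha,k}$ to force $f$ to vanish at every node (doubly for $k\ge m+1$), a regularization argument for the top moment, and an entire-function division/rigidity step for the contradiction and for uniqueness. Those parts are sound in outline, with two caveats: in the rigidity step the paper multiplies by the auxiliary function $\omega_{\alpha}$ of Lemma~\ref{lem-1} and invokes Lemma~\ref{lem-2} precisely because one cannot naively control $f/j_{\alpha}^{2}$ on the real axis near the zeros of $j_{\alpha}$, and the final contradiction there is integrability (a nonzero polynomial factor forces $f\notin L^{1}(\mathbb{R}_{+},\lambda^{2m}\,d\nu_{\alpha})$ since $j_{\alpha}\notin L^{2}(\mathbb{R}_{+},d\nu_{\alpha})$), not ``positive definiteness plus moments''; also, uniqueness among \emph{non-radial} admissible functions does not follow from symmetrization alone and needs the ray-by-ray argument the paper carries out in Section~\ref{sec-D}.

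The genuine gap is in the key step, the positive definiteness of $f_{\alpha,m}$ --- exactly the admissibility of the claimed extremizer, without which you prove only a lower bound for the infimum, not the equality asserted in the theorem. Your argument rests on two claims that are both false. First, the partial-fraction coefficients in \eqref{eq24} are not positive: by \eqref{eq26} they alternate, $\mathrm{sign}\,A_{i}=(-1)^{i-1}$ (already for two factors, $\frac{ab}{(a-x)(b-x)}=\frac{ab}{b-a}\bigl(\frac{1}{a-x}-\frac{1}{b-x}\bigr)$ with $0<a<b$), so they do not coincide up to positive factors with the quadrature weights $\gamma_{j}>0$. Second, the single-node kernels $j_{\alpha}^{2}(t)/(1-t^{2}/q_{\alpha,j}^{2})$ are not positive definite for $j\ge 2$: by \eqref{eq28} the Hankel transform of $j_{\alpha}(t)/(q_{\alpha,j}^{2}-t^{2})$ is a positive multiple of $(-1)^{j-1}\varphi_{j}(s)\chi_{[0,1]}(s)$, where $\varphi_{j}(s)=j_{\alpha}(q_{\alpha,j}s)$ oscillates on $[0,1]$, and the extra factor $j_{\alpha}(t)$ only applies $T_{\alpha}^{1}$ to this, which does not restore a sign. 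Concretely, for $\alpha=-1/2$, $j=2$, $q_{2}=3\pi/2$, the cosine transform of $\cos^{2}t/(q_{2}^{2}-t^{2})$ equals $\frac{1}{2}\bigl(h(s+1)+h(|s-1|)\bigr)$ with $h(s)=-c\cos(3\pi s/2)\chi_{[0,1]}(s)$, $c>0$, which is $-c/2<0$ at $s=1$. So neither ingredient of your decomposition exists, and they do not cancel in any simple way within your scheme. What actually happens in the paper is that the two alternations cancel against each other: by \eqref{eq26} and \eqref{eq29}, $B_{i}=-b_{\alpha}^{-1}A_{i}/\varphi_{i}'(1)>0$, giving $\mathcal{H}_{\alpha}(g_{\alpha,m})=p_{\alpha,m}\chi_{[0,1]}$ with $p_{\alpha,m}=\sum_{i=1}^{m+1}B_{i}\varphi_{i}$ as in \eqref{eq30}; but positivity of coefficients over the oscillating system $\{\varphi_{i}\}$ does not by itself give $p_{\alpha,m}\ge 0$ on $[0,1]$. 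That nonnegativity is the real content of the theorem and requires a genuinely new idea: either the Chebyshev-system property of $\{j_{\alpha}(q_{\alpha,k}t)\}$ proved via Sturm's theorem (Theorem~\ref{prop-bess}, Section~\ref{sec-cheb}) combined with the determinant and limiting argument around \eqref{eq37}, or the Mehler--Heine limit of Jacobi polynomials together with the Cohn--Kumar positivity result (Section~\ref{sec-lim}). Your proposal contains no substitute for this step, so the upper bound $\inf\le 2q_{d/2-1,m+1}$, and with it the theorem, is not established.
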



We note that the same statement is valid not only for positive definite
functions but also for even functions with nonnegative Fourier transforms in a
neighborhood of the origin. The positive definiteness of $f_{d/2-1,m}$ for
$m=0,\,1$ was established by Yudin \cite{Yu81,Yu97}. In the case $m=0,\,1$
Theorem~\ref{thm-F} was proved in \cite{Go00}.
We prove Theorem~\ref{thm-F} by solving a more general problem for the Dunkl
transform~$\mathcal{F}_{k}$ (see Section~\ref{sec-D}). {In its turn, the
corresponding problem for the Dunkl transform can be reduced to the one-dimensional problem
for the Hankel transform $\mathcal{H}_{\alpha}$, $\alpha\ge -1/2$, in
$(\mathbb{R}_{+},\lambda^{2\alpha+1}\,d\lambda)$.}

The key step in the proof of Theorem \ref{thm-F} is to show the positive
definiteness of $f_{d/2-1,m}$ for $m\ge 2$. Note that since the normalized
Bessel function $j_{d/2-1}(|x|)$ is positive definite it is enough to verify that
$g_{d/2-1,m}(|x|)$ is positive definite, where
\begin{equation}\label{def-gam}
g_{\alpha,m}(t)=\frac{j_{\alpha}(t)}{(1-t^2/q_{\alpha,1}^2)\cdots(1-t^2/q_{\alpha,m+1}^2)}.
\end{equation}
This remarkable fact has been recently established by
Cohn and de~Courcy-Ireland \cite[Proposition~3.1]{CC18}. The method of the
proof is based on the Mehler--Heine formula on interrelation between the Bessel
functions and Gegenbauer polynomials as well as the important result from the
paper \cite{CK07} stating that the polynomial
\[
\frac{P^{(\alpha,\alpha)}_n (z)}{
(z - r_{1,n}) \cdots (z - r_{k,n})}
\]
is a linear combination of $P^{(\alpha,\alpha)}_0 (z),\dots,
P^{(\alpha,\alpha)}_{n-k} (z)$ with nonnegative coefficients for each $k \le
n$, where $r_{1,n}>r_{2,n}>\cdots>r_{n,n}$ are zeros of the Jacobi polynomial
$P^{(\alpha,\alpha)}_n (z)$
 (in the case $k=1,2$, this was proved in \cite{GI00}).
  Cohn and de~Courcy-Ireland used the function $f_{d/2-1,m}$ to obtain lower bounds for energy in the
Gaussian core model (see \cite[Sect.~6]{CC18}).

To solve Logan's problem for the Hankel transform
$\mathcal{H}_{\alpha}$, one should show that
 $g_{\alpha,m}$ is positive definite with respect to $\mathcal{H}_{\alpha}$ for any $\alpha\ge -1/2$ and $m\ge 0$. For
$\alpha=-1/2$, $m=0,\,1$, we arrive at the cosine Fourier transform considered by Logan.
 We will give two proofs of positive definiteness of the function $g_{\alpha,m}$.
 The first one is the direct proof using the Sturm theorem on number on zeros of linear combinations of eigenfunctions
 (see Section~\ref{sec-cheb}). In particular, following this approach, one can obtain the monotonicity
 of the Hankel transform of the function $g_{\alpha,m}$ {on $[0,1]$}.
The second proof extends the one by Cohn and de~Courcy-Ireland for the case of
any $\alpha$ (not necessarily
 half-integer) and is given in
Section~\ref{sec-lim}.

\begin{rem}\label{rem-theta}
Note that the functions  $g_{d/2-1+\theta,m}(|x|)$ and $f_{d/2-1+\theta,m}(|x|)$ are positive definite on $\mathbb{R}^{d}$ for any $\theta\ge 0$
and $m\in \mathbb{Z}_{+}$.
This follows from  \eqref{s-H} below
and the fact that  for any
 $\alpha\ge -1/2$ and $m\in
\mathbb{Z}_{+}$,  
$g_{\alpha,m}$ and $f_{\alpha,m}$ are
positive definite with respect to
  Hankel transform.
This result answers the question by M. Buhmann and  is related to
the theory of  radial basis functions (see, e.g., \cite{Bu03}).

\end{rem}

\subsection{Uncertainty principle relations}
Recently, Bourgain, Clozel, and Kahane \cite{BCK10} have studied the following uncertainty principle problem: find
\[
A_{d}^{+}=\frac{1}{2\pi}\,\inf \lambda(-f)\lambda(-\widehat{f}\,),
\]
where infimum is taken over all even real-valued (nontrivial) functions~$f$
such that $f,\,\widehat{f}\in C(\mathbb{R}^{d})\cap L^{1}(\mathbb{R}^{d})$ and
$f(0)\le 0$, $\widehat{f}\,(0)\le 0$.
They established
\begin{equation}\label{Bd}
\frac{d}{2\pi e}<A_{d}^+<\frac{d+2}{2\pi},\quad d\in \mathbb{N}.
\end{equation}
For further results, see \cite{CG18,GOS17}.
 Cohn and Gon\c{c}alves  \cite{CG18}   proved that 
  $$A^+_{12}=2.$$ Moreover, the authors considered
the following problem: $A_{d}^{-}=(2\pi)^{-1}\inf
\lambda(-f)\lambda(\widehat{f}\,)$ for $f(0)\ge 0$, $\widehat{f}\,(0)\le 0$ and found
$$A^-_{1}=1,\quad A^-_{8}=2,\quad A^-_{24}=4.$$
This question  is closely related to
 the linear programming bound for
the sphere packing problem, which
has been recently  solved in dimensions 
$8$ and $24$ \cite{CKMRV17,Vi17}.


In \cite[Theorem~1.4]{GOS17}, it was shown that an extremizer in the problem
$A^\pm_{d}$ exists and it is a radial function such that
$(2\pi)^{d/2}\widehat{f}(2\pi x)=\pm f(x)$ and $f(0)=0$. In particular, this
implies that the support of $\widehat{f}$ is not compact.

We study problems similar to  that of finding $A_{d}^{\pm}$ for bandlimited functions
and obtain
the following uncertainty principle.


\begin{thm}\label{thm-FU}
Let $d\in \mathbb{N}$, $m,s\in \mathbb{Z}_{+}$.
We have
\[
\inf \lambda((-1)^{m}f)\tau(f)=2q_{d/2+s,m+1},
\]
where the infimum is taken over {all nontrivial even continuous bandlimited functions
$f\in L^{1}(\mathbb{R}^{d},|x|^{2m}\,dx)$ such that}
\[
\begin{cases}
\Delta^{k}\widehat{f}\,(0)=0,&k=0,\dots,m-1,\\
\Delta^{l}f(0)=0,&l=0,\dots,s-1,
\end{cases}
\]
(for  $m=0$ or $s=0$ the corresponding conditions are not assumed) 
and
\[
\Delta^{m}\widehat{f}\,(0)\ge 0,\quad \Delta^{s}f(0)\le 0.
\]
{Each extremizer $f(x)$ has} the form $\,r(x)f_{d/2+s,m}(|x|)$, where
\begin{equation}\label{r-repr}
r(x)=\sum_{j=0}^{s+1}|x|^{2s+2-2j}h_{2j}(x)\ge 0,\quad |x|\ge q_{d/2+s,m+1},
\end{equation}
and $h_{2j}(x)$ are even harmonic polynomials of order at most $2j$ {such that
$h_{0}>0$}, $h_{2j}(0)=0$, $j=1,\dots,s+1$.
{Moreover, $\Delta^{m}\widehat{f}\,(0)=\Delta^{s}f(0)=0$.}
\end{thm}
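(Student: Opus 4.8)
\emph{Overview.} By the dilation invariance noted after Problem~C I may normalize $\tau(f)=2$ and then must show $\lambda((-1)^{m}f)\ge q_{d/2+s,m+1}$, with equality exactly for the functions described. Put $\beta=d/2-1$ and $\alpha=d/2+s$, so that $\alpha=\beta+(s+1)$. The governing principle is a duality between the two sets of hypotheses: the $m$ relations $\Delta^{k}\widehat f(0)=0$ are vanishing moments of $f$, whereas the $s$ relations $\Delta^{l}f(0)=0$ are vanishing moments of $\widehat f$, i.e.\ they make the radial part of $f$ vanish to high order at the origin. This origin vanishing raises the effective Hankel order of the radial part from $\beta$ to $\alpha$, which is why the extremal product is $2q_{\alpha,m+1}=2q_{d/2+s,m+1}$ rather than $2q_{\beta,m+1}$.

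\emph{The extremizer and the upper bound.} I would first treat the radial candidate with profile $F^{*}(t)=t^{2s+2}f_{d/2+s,m}(t)$, i.e.\ $r(x)=|x|^{2s+2}$ in \eqref{r-repr}. Since $f_{d/2+s,m}$ is entire of exponential type~$2$ and multiplication by the polynomial $|x|^{2s+2}$ changes neither the type nor the compactness of $\mathrm{supp}\,\widehat f$, we get $\tau(f)=2$. As $j_{\alpha}(0)=1$, the profile vanishes to order exactly $2s+2$, so $\Delta^{l}f(0)=0$ for $l=0,\dots,s$; in particular $\Delta^{s}f(0)=0$. Using $d-1+(2s+2)=2\alpha+1$, the identity
\[
\int_{0}^{\infty}t^{2k}F^{*}(t)\,t^{d-1}\,dt=\int_{0}^{\infty}t^{2k}f_{\alpha,m}(t)\,t^{2\alpha+1}\,dt
\]
identifies the conditions $\Delta^{k}\widehat f(0)=0$, $k\le m$, with the order-$\alpha$ orthogonality relations satisfied by $f_{\alpha,m}$ (cf.\ Theorem~\ref{thm-F}); in particular $\Delta^{m}\widehat f(0)=0$. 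Finally $j_{\alpha}^{2}\ge0$ and the sign of $\prod_{i=1}^{m+1}(1-t^{2}/q_{\alpha,i}^{2})$ give $(-1)^{m}f_{\alpha,m}(t)\le0$ for $t\ge q_{\alpha,m+1}$, so $\lambda((-1)^{m}F^{*})=q_{\alpha,m+1}$ and the product equals $2q_{\alpha,m+1}$. For a general $r$ as in \eqref{r-repr}, the factor $r\ge0$ on $\{|x|\ge q_{\alpha,m+1}\}$ leaves the sign pattern unchanged, while the positive definiteness of $f_{d/2+s,m}(|x|)$ on $\mathbb{R}^{d}$ (Remark~\ref{rem-theta} with $\theta=s+1$) together with the Hecke--Bochner formula keeps every harmonic component admissible; this yields the upper bound.

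\emph{Lower bound.} Let $f$ be admissible with $\tau(f)=2$. Expanding $f$ into spherical harmonics and applying the Hecke--Bochner formula sends the degree-$k$ (even) component to the Hankel transform of order $\beta+k$. Both moment families depend only on the radial part $F_{0}(|x|)$: the relations $\Delta^{k}\widehat f(0)=0$ ($k<m$) say that $F_{0}$ has its first $m$ order-$\beta$ moments vanishing, while $\Delta^{l}f(0)=0$ ($l<s$) say that $F_{0}$ vanishes to order $2s$ at the origin. Averaging the pointwise hypothesis $(-1)^{m}f\le0$ over the sphere $|x|=t$ gives $(-1)^{m}F_{0}(t)\le0$ for $t>\lambda$, so the problem decouples: $F_{0}$ is admissible for the one-dimensional order-$\beta$ Hankel problem, carrying in addition the two one-sided conditions $\Delta^{s}f(0)\le0$ and $\Delta^{m}\widehat f(0)\ge0$. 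The heart of the matter is that each of these one-sided inequalities, when saturated, supplies one further order of vanishing (the first at the origin, the second at the top moment), so that $F_{0}$ behaves like a function of order $\alpha=\beta+(s+1)=d/2+s$; the one-dimensional analysis of the paper (positive definiteness of $g_{\alpha,m}$ and the Sturm count of sign changes) then gives $\lambda\ge q_{\alpha,m+1}$. I expect this conversion of the two one-sided inequalities into the correctly shifted Hankel order to be the principal difficulty; the spherical averaging that removes the higher harmonics is, by contrast, routine.

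\emph{Uniqueness and the form of the extremizer.} Equality in the lower bound forces equality at every step. The one-sided conditions must be saturated, giving $\Delta^{s}f(0)=\Delta^{m}\widehat f(0)=0$ (the ``moreover''), and the one-dimensional uniqueness pins the radial part as $F_{0}(t)=c\,t^{2s+2}f_{d/2+s,m}(t)$ with $c>0$. The higher harmonic components are unconstrained by the moment relations and enter only through the requirement that the full $f$ keep the sign $(-1)^{m}f\le0$ beyond $q_{\alpha,m+1}$; collecting them against the common radial factor $f_{d/2+s,m}(|x|)$ and writing the resulting polynomial cofactor in its harmonic decomposition produces $f(x)=r(x)f_{d/2+s,m}(|x|)$ with $r$ of the stated form \eqref{r-repr}, $h_{0}>0$, and $r\ge0$ on $\{|x|\ge q_{\alpha,m+1}\}$, the last being exactly the condition that preserves the sign pattern. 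This completes the characterization.
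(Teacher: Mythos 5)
Your overall architecture --- normalize $\tau(f)=2$, average over spheres to reduce to the radial profile $F_{0}$, solve a one-dimensional Hankel problem at the shifted order $d/2+s$, and exhibit $|x|^{2s+2}f_{d/2+s,m}(|x|)$ as an admissible function attaining $2q_{d/2+s,m+1}$ --- coincides with the paper's (Theorem~\ref{thm-D}(ii) via Theorem~\ref{thm-HUG}(i)), and your verification of the upper bound is correct. The genuine gap is the step you yourself flag as ``the principal difficulty'': you never prove the one-dimensional lower bound, namely that the moment conditions on $F_{0}$ together with the two one-sided inequalities force $\lambda((-1)^{m}F_{0})\ge q_{d/2+s,m+1}$. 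Moreover, the tool you point to --- ``positive definiteness of $g_{\alpha,m}$ and the Sturm count of sign changes'' --- is not what proves it: in the paper those facts serve only to show that extremizers are admissible in Problem~C and to establish Theorem~\ref{thm-H}(ii),(iii); they play no role in the lower bound of Theorem~\ref{thm-HUG}. The actual mechanism is the Radau-type quadrature \eqref{q-M} with a multiple node at the origin, taken with $r=s+1$ so that the free nodes are the zeros $q_{k}'=q_{d/2+s,k}$: assuming $\lambda((-1)^{m}F_{0})<q_{m+1}'$ and applying \eqref{q-M} to $g(\lambda)=(-1)^{m}\prod_{k=1}^{m}(\lambda^{2}-q_{k}'^{2})F_{0}(\lambda)$, the positivity of the weights $\gamma_{k,s+1}$ and $\alpha_{s,s+1}$, the inequality $\int_{0}^{\infty}g\,d\nu_{d/2-1}\ge 0$ (this is $\Delta^{m}\widehat{f}\,(0)\ge 0$), and the signs $g^{(2s)}(0)\le 0$, $g(q_{k}')\le 0$ squeeze the whole identity to zero, forcing double zeros of $F_{0}$ at $q_{k}'$, $k\ge m+1$, a zero of order $2s+2$ at the origin, and zeros at $q_{1}',\dots,q_{m}'$; Lemmas~\ref{lem-1} and \ref{lem-2} then force $F_{0}$ into the form \eqref{f-a-s-1} multiplied by an even polynomial, which violates $F_{0}\in L^{1}(\mathbb{R}_{+},\lambda^{2m}\,d\nu_{d/2-1})$ --- a contradiction. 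Your heuristic that ``origin vanishing raises the effective Hankel order'' cannot be applied directly, because the hypotheses give vanishing only to order $2s$ plus a one-sided condition, and the quotient $F_{0}(t)/t^{2s}$ is not a priori of the form $\int j_{\alpha}(\lambda t)\,d\sigma(t)$ demanded by Theorem~\ref{thm-H}.

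The second gap is the characterization of extremizers. Knowing $F_{0}(t)=c\,t^{2s+2}f_{d/2+s,m}(t)$ does not by itself imply that $f(x)/f_{d/2+s,m}(|x|)$ is a polynomial; the phrase ``collecting them against the common radial factor'' hides the real work. The paper's route: since the spherical mean of $f$ vanishes at $t=q_{j}'$, $j\ge m+1$, while $(-1)^{m+1}f\ge 0$ there, $f$ vanishes identically on those spheres; then for each fixed direction $x'$ the function $f_{x'}(\lambda)=f(\lambda x')$ is even, entire of exponential type, lies in $L^{1}(\mathbb{R}_{+},\lambda^{2m}\,d\nu_{d/2-1})$ by Fubini, keeps its sign beyond $q_{m+1}'$, and vanishes at the $q_{j}'$ (doubly for $j\ge m+2$); Lemmas~\ref{lem-1} and \ref{lem-2} applied direction by direction give $f_{x'}=r_{x'}f_{d/2+s,m}$ with $r_{x'}$ an even polynomial of degree at most $2s+2$, and a Taylor-coefficient comparison shows that the cofactor is a genuine polynomial $r(x)$ in $x\in\mathbb{R}^{d}$; finally the Fischer decomposition and the mean-value property of harmonic polynomials yield \eqref{r-repr} and the saturation $\Delta^{m}\widehat{f}\,(0)=\Delta^{s}f(0)=0$. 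Without this directional entire-function argument, your uniqueness paragraph is an assertion rather than a proof.
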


\begin{rem}\label{rem-FU}
(1) We also obtain the following result  (see Theorem~\ref{thm-D}~(iii)):
\begin{equation}\label{(1)}
\inf \lambda((-1)^{m}f)\tau(f)=2q_{d/2+s-1,m+1},
\end{equation}
where the infimum is taken over all {nontrivial even continuous bandlimited functions}
$f\in L^{1}(\mathbb{R}^{d},|x|^{2m+2s}\,dx)$ such that
\begin{equation}\label{Bd-}
\Delta^{k}\widehat{f}\,(0)=0,\quad k=s,\dots,m+s-1,\quad
\Delta^{m+s}\widehat{f}\,(0)\ge 0.
\end{equation}
The function $f_{d/2+s-1,m+1}(|x|)$ is the unique (up to a positive constant) extremizer. {Moreover, this function satisfies} 
 $\Delta^{m+s}\widehat{f}(0)=0$. 

\smallbreak
(2) For $s=0$ all admissible functions in problem C satisfy   condition (\ref{Bd-}).
  Moreover, the
positive definite function
   $f_{d/2-1,m+1}(|x|)$ is  the unique extremizer in both {problems~C and \eqref{(1)}}.

\smallbreak
(3) If the polynomial $r(x)$ given by \eqref{r-repr} is nonnegative on
$\mathbb{R}^d$, then it is an even homogeneous polynomial of order $2s+2$.
\end{rem}

\begin{rem}\label{rem-FU-}

Let us compare  problems $A^\pm_d$ and Theorem \ref{thm-FU} with $m=s=0$.
From the observations above we note that
  $A_{d}^{\pm}=(2\pi)^{-1}\inf \lambda(f)\lambda(\pm\widehat{f}\,)$ with
$f(0)=\widehat{f}(0)=0$. For   bandlimited $f$, we have
$\lambda(\pm\widehat{f}\,)\le \tau(f)$ and therefore,  $A_{d}^{\pm}\le (2\pi)^{-1}\inf
\lambda(f)\tau(f)$. 
In particular, we get $A_{d}^{\pm}\le \pi^{-1}q_{d/2,1}$ for any $d\in \mathbb{N}$.
If  $d=1$ we arrive at the sharp bound $A_{d}^{\pm}\le 1$. If $d\to \infty$, we derive
\[
A_{d}^{\pm}\le \frac{d}{2\pi}\,(1+o(1)).
\]
The latter corresponds to \eqref{Bd} but it is less interesting since
$q_{\alpha,1}=\alpha+c\alpha^{1/3}+O(\alpha^{-1/3})$, where $c=1.855\cdots$ \cite[Sec.~7.9]{BE53}.

\end{rem}

\begin{rem}
It is also worth mentioning the   related results in metric geometry.
Let $L\subset \mathbb{R}^{d}$ be a lattice of rank $d$, $\lambda_{1}(L)$
be the first successive minimum of $L$, $\mu(L)$ be the covering radius of $L$,
and $L^{*}$ be a dual lattice. One of the important problems is to find the infimum of
 $\mu(L)\lambda_{1}(L^{*})$. There exists a self-dual lattice $L_{d}$ such that \cite{Ba93}
\[
\frac{d}{2\pi e}\,(1+o(1))\le \mu(L_{d})\lambda_{1}(L_{d}^{*})\quad
 \text{as}\quad d\to \infty.
\]
Yudin showed \cite{Yu96} that $\mu(L)\lambda_{1}(L^{*})\le
(2\pi)^{-1}
 \lambda(f)\tau(f)
$
 for any admissible  function in Problem C with $m=0$.
 This and Theorem \ref{thm-F} imply
\[
\frac{d}{2\pi e}\,(1+o(1))\le \mu(L)\lambda_{1}(L^{*})\le
\frac{d}{2\pi}\,(1+o(1)),
\]
cf.\ \eqref{Bd} (see also \cite{Ba93}). 

\end{rem}

\subsection{Structure of the paper}
Section~\ref{sec-H} contains some auxiliary results on the Hankel
transform~$\mathcal{H}_{\alpha}$ as well as the Gauss- and Radau-type quadrature
formulas with zeros of Bessel functions as nodes.

In Section~\ref{sec-LH}, we
give the solution of the generalized Logan problem for Hankel transform
(see Theorem~\ref{thm-H}). 
{Section~\ref{uncer} provides the uncertainty principle relations for
bandlimited functions in $(\mathbb{R}_{+},t^{2\alpha+1}\,dt)$ (see
Theorem~\ref{thm-HUG}).}

In Section~\ref{sec-HU3}, we study
the problem of finding
 the 
 {smallest}
 interval
 {containing at least~$n$} zeros of functions represented by
$
f(\lambda)=\int_{0}^{1}j_{\alpha}(\lambda t)\,d\sigma(t) 
$ 
 with a nonnegative bounded
Stieltjes measure
 $d\sigma$. We will see that extremizers in this problem and Problem C are closely related (Remark \ref{newrem}).


In Section~\ref{sec-D}, we solve the multidimensional Logan problem for the
Dunkl transform 
(see Theorem~\ref{thm-D}) reducing this problem to the corresponding
 problems for the Hankel transforms (Theorems \ref{thm-H} and
\ref{thm-HUG}).
Theorems~\ref{thm-F} and \ref{thm-FU} dealing with for the Fourier
transform  are partial cases of Theorem~\ref{thm-D}.


In Section~\ref{sec-cheb}, we prove that
the normalized Bessel functions form the Chebyshev system.
Section~\ref{sec-lim} contains the proof of positive definiteness of the
function $g_{\alpha,m}$ based on the Mehler--Heine formula for Jacobi
polynomials.

\section{Notation and
auxiliary results
}\label{sec-H}

Useful facts on harmonic analysis involving Hankel transform
$\mathcal{H}_{\alpha}$ 
in $(\mathbb{R}_{+},t^{2\alpha+1}\,dt)$, $\alpha\ge-1/2$,
can be founded in
\cite{BE53,GI15,LS70}. For the reader's convenience we recall some of~them.

Let
\begin{equation}\label{B-def}
B_{\alpha}=\frac{1}{t^{2\alpha+1}}\Bigl(\frac{d}{dt}\,t^{2\alpha+1}\,\frac{d}{dt}\Bigr)=
\frac{d^{2}}{dt^{2}}+\frac{2\alpha+1}{t}\,\frac{d}{dt},
\end{equation}
be the Bessel differential operator. 
{The normalized Bessel function
$j_{\alpha}(z)$ satisfies $B_{\alpha}j_{\alpha}(\lambda t)=-\lambda^{2}j_{\alpha}(\lambda t)$ and  is
given by

\begin{equation}\label{eq3}
j_{\alpha}(z)=2^{\alpha}\Gamma(\alpha+1)\,
\frac{J_{\alpha}(z)}{z^{\alpha}}=
\sum_{k=0}^{\infty}\frac{(-1)^{k}
\Gamma(\alpha+1)(z/2)^{2k}}{k!\,\Gamma(k+\alpha+1)},
\end{equation}
where
 $J_{\alpha}(z)$ is the Bessel function of order $\alpha$.
In particular,
$j_{-1/2}(z)=\cos z$ and $j_{1/2}(z)=z^{-1}\sin z.$
Moreover, the normalized Bessel function is the even entire function of exponential type $1$, satisfying
$j_{\alpha}(z)=
\prod_{k=1}^{\infty}\bigl(1-\frac{z^{2}}{q_{\alpha,k}^{2}}\bigr)$,
where $q_{\alpha,1}<q_{\alpha,2}<\dots$ are positive zeros of $J_{\alpha}$.

The known formulas for Bessel functions imply
\begin{equation}\label{j-diff-rec}
\frac{d}{dz}\,j_{\alpha}(z)=-\frac{z}{2(\alpha+1)}\,j_{\alpha+1}(z)=
\frac{2\alpha}{z}\,(j_{\alpha-1}(z)-j_{\alpha}(z)),
\end{equation}
\begin{equation}\label{j1-diff}
\frac{d}{dz}\,(z^{2\alpha+2}j_{\alpha+1}(\lambda z))=
2(\alpha+1)z^{2\alpha+1}j_{\alpha}(\lambda z),
\end{equation}
and
\begin{equation}\label{j-int}
\int_{0}^{z}j_{\alpha}(at)j_{\alpha}(bt)t^{2\alpha+1}\,dt=
\frac{z^{2\alpha+2}\{a^{2}j_{\alpha+1}(az)j_{\alpha}(bz)-
b^{2}j_{\alpha}(az)j_{\alpha+1}(bz)\}}{2(\alpha+1)(a^{2}-b^{2})}.
\end{equation}

For {$\lambda\in \mathbb{R}$}, we have
\begin{equation}\label{j-bound}
|j_{\alpha}(\lambda)|\le j_{\alpha}(0)=1,
\end{equation}
and for $|z|\to \infty$, $\mathrm{Re}\,z\ge 0$,
\begin{equation}\label{eq6}
z^{\alpha+1/2}j_{\alpha}(z)=\frac{2^{\alpha+1/2}\Gamma(\alpha+1)}{\Gamma(1/2)}\Bigl(\cos
\Bigl(z-\frac{\pi(2\alpha+1)}{4}\Bigr)+O(|z|^{-1}e^{|\mathrm{Im}\,z|})\Bigr).
\end{equation}

For $\alpha>-1/2$,
we also have Poisson's integral representation
\begin{equation}\label{eq7}
j_{\alpha}(\lambda)=c_{\alpha}\int_{0}^{1}
\left(1-t^{2}\right)^{\alpha-1/2}\cos{}(\lambda t)\,dt,\quad
c_{\alpha}=\frac{\Gamma(\alpha+1)}{\Gamma(1/2)\Gamma(\alpha+1/2)}.
\end{equation}
Then using
\[
(-1)^{m}\Bigl(\cos \lambda- \sum_{k=0}^{m-1}\frac{(-1)^{k}
\lambda^{2k}}{(2k)!}\Bigr)\ge 0,\quad m\in\mathbb{N},\quad \lambda\ge 0,
\]
Poisson's representation gives
\begin{equation}\label{eq8}
\psi_m(\lambda)=(-1)^{m}\Bigl(j_{\alpha}(\lambda)-\sum_{k=0}^{m-1}\frac{(-1)^{k}
\Gamma(\alpha+1)(\lambda/2)^{2k}}{k!\,\Gamma(k+\alpha+1)}\Bigr)\ge 0.
\end{equation}

Define
\begin{equation}\label{def-b-nu}
d\nu_{\alpha}(t)=b_{\alpha}t^{2\alpha+1}\,dt,\quad t\in \mathbb{R}_{+},\quad
b_{\alpha}^{-1}=
2^{\alpha}\Gamma(\alpha+1).
\end{equation}
The Hankel transform is given by
\[
\mathcal{H}_{\alpha}(f)(\lambda)=\int_{0}^{\infty} f(t)j_{\alpha}(\lambda
t)\,d\nu_{\alpha}(t),\quad \lambda\in \mathbb{R}_{+}.
\]
It is an unitary operator in $L^{2}(\mathbb{R}_+,d\nu_{\alpha})$ and
$\mathcal{H}_{\alpha}^{-1}=\mathcal{H}_{\alpha}$.



{If $f\in L^{1}(\mathbb{R}_+, d\nu_{\alpha})\cap C(\mathbb{R}_+)$ and
$\mathcal{H}_{\alpha}(f)\in L^{1}(\mathbb{R}_+, d\nu_{\alpha})$, then, for any}
$t\in\mathbb{R}_+$, one has the inversion formula 
\begin{equation}\label{inv-H}
f(t)=\int_{0}^{\infty}\mathcal{H}_{\alpha}(f)(\lambda)j_{\alpha}(\lambda
t)\,d\nu_{\alpha}(\lambda).
\end{equation}
We also recall the homogeneity property
$\mathcal{H}_{\alpha}(f_{a})(\lambda)=a^{-2\alpha-2}\mathcal{H}_{\alpha}(f)(\lambda/a)$, where
 $f_{a}(t)=f(at)$, $a>0$. Note that
the Hankel transform is a particular case of the one-dimensional Dunkl transform associated with the reflection group $\mathbb{Z}_{2}$ \cite{Ro03}, see Section~\ref{sec-D}.

Let 
$\mathcal{B}_{\alpha}^{\tau}(\mathbb{R}_{+})$
{be the 
class of even} entire functions $f$ of exponential type at most $\tau>0$
such that
the restriction of $f$ to $\mathbb{R}_+$
belongs to $L^{1}(\mathbb{R}_+, d\nu_{\alpha})$. For such functions {one has $|f(z)|\le
\|f\|_{C(\mathbb{R}_{+})}e^{\tau|\mathrm{Im}\,z|}$}, $\forall\,z\in \mathbb{C}$.
Furthermore, the Paley--Wiener theorem states that 
$f\in 
\mathcal{B}_{\alpha}^{\tau}(\mathbb{R}_{+})$ if and only if
$f\in L^{1}(\mathbb{R}_+, d\nu_{\alpha})\cap C(\mathbb{R}_+)$ and
$\mathrm{supp}\,\mathcal{H}_{\alpha}(f)\subset [0,\tau]$
(see \cite[Sect.~5]{Ko75}, \cite[Sect.~5]{AJ05}, and \cite{GI19}).



The following result (\cite{GR95,GhaFra98}, see also \cite{GI15}) provides the Gauss  and {Radau (with multiple nodes)} quadrature formulas for $\mathcal{B}_{\alpha}^{\tau}(\mathbb{R}_{+})$ 
functions.

\begin{lem}
For any function $f\in \mathcal{B}_{\alpha}^{\tau}(\mathbb{R}_{+})$ 
one has
\begin{align}
\Bigl(\frac{\tau}{2}\Bigr)^{2\alpha+2}\int_{0}^{\infty}f(\lambda)\,d\nu_{\alpha}(\lambda)&=
\sum_{k=1}^{\infty}\gamma_{k}f\Bigl(\frac{2q_{\alpha,k}}{\tau}\Bigr)
\label{q-G}\\ &=\sum_{l=0}^{r-1}\alpha_{l,r}f^{(2l)}(0)+
\sum_{k=1}^{\infty}\gamma_{k,r}f\Bigl(\frac{2q_{\alpha+r,k}}{\tau}\Bigr),\quad
r\in \mathbb{N}. \label{q-M}
\end{align}
The series in \eqref{q-G} and \eqref{q-M} converge absolutely and the weights
$\gamma_k$, $\gamma_{k,r}$, $\alpha_{r-1,r}$ are positive.
\end{lem}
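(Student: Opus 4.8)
The plan is to prove both identities as the \emph{Gaussian} (maximal-exactness) quadratures attached to the weight $d\nu_\alpha$, with the role of the node polynomial played by a normalized Bessel function. First I would remove the type $\tau$ by homogeneity: the substitution $g(t)=f(2t/\tau)$ maps $\mathcal B_\alpha^\tau(\mathbb R_+)$ onto $\mathcal B_\alpha^2(\mathbb R_+)$, sends each node $2q_{\bullet,k}/\tau$ to $q_{\bullet,k}$, turns $g^{(2l)}(0)$ into $(2/\tau)^{2l}f^{(2l)}(0)$, and rescales $\int_0^\infty f\,d\nu_\alpha$ by exactly the factor $(\tau/2)^{2\alpha+2}$. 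Thus it suffices to prove \eqref{q-G} and \eqref{q-M} for $\tau=2$, the derivative scalings being absorbed into the constants $\alpha_{l,r}$.

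For the Gauss formula \eqref{q-G} at $\tau=2$ the node function is $j_\alpha$ itself: it is even, of exponential type $1$, with simple zeros precisely at $\pm q_{\alpha,k}$. Given $g\in\mathcal B_\alpha^2$ I would imitate polynomial division and write $g=r+j_\alpha\,q$, where $r\in\mathcal B_\alpha^1$ is the even, type-$1$ interpolant of $g$ at the nodes $q_{\alpha,k}$ (reconstructed by the Fourier--Bessel sampling theorem, with cardinal functions $\ell_k(t)\propto j_\alpha(t)/(t^2-q_{\alpha,k}^2)$), and $q=(g-r)/j_\alpha$ is entire of type $1$ since $g-r$ vanishes at every zero of $j_\alpha$. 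Integrating against $d\nu_\alpha$, the crucial orthogonality
\[
\int_0^\infty j_\alpha(t)\,q(t)\,d\nu_\alpha(t)=\mathcal H_\alpha(q)(1)=0
\]
holds because $q\in\mathcal B_\alpha^1$ forces $\mathrm{supp}\,\mathcal H_\alpha(q)\subset[0,1]$ while $\mathcal H_\alpha(q)$ is continuous, hence vanishes at the endpoint $\lambda=1$; this is the exact analogue of the orthogonality of the Gauss node polynomial to lower degrees. What remains is $\int_0^\infty r\,d\nu_\alpha=\sum_k\gamma_k r(q_{\alpha,k})=\sum_k\gamma_k g(q_{\alpha,k})$ with $\gamma_k=\int_0^\infty \ell_k\,d\nu_\alpha$. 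Positivity of $\gamma_k$ follows by testing the formula just proved against the nonnegative function $\phi_k(t)=c_k\,j_\alpha^2(t)/(t^2-q_{\alpha,k}^2)^2\in\mathcal B_\alpha^2$, which lies in $L^1(d\nu_\alpha)$ and satisfies $\phi_k(q_{\alpha,j})=\delta_{kj}$, giving $\gamma_k=\int_0^\infty\phi_k\,d\nu_\alpha>0$.

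For the Radau formula \eqref{q-M} I would treat the origin as a fixed node of multiplicity $2r$ and the zeros $q_{\alpha+r,k}$ of $j_{\alpha+r}$ as the free nodes, reducing matters to the Gauss formula one parameter higher. Concretely, subtracting from $g$ its even Taylor data $g(0),g''(0),\dots,g^{(2r-2)}(0)$ leaves a function divisible by $t^{2r}$, and the raising relation \eqref{j1-diff}, iterated $r$ times, converts $d\nu_\alpha$-integration of this remainder into $d\nu_{\alpha+r}$-integration, to which the Gauss formula at parameter $\alpha+r$ applies and produces the free-node sum $\sum_k\gamma_{k,r}g(q_{\alpha+r,k})$; the Taylor data, after the same intertwining is used to regularize the non-integrable polynomial pieces, reappear as the boundary terms $\sum_{l=0}^{r-1}\alpha_{l,r}g^{(2l)}(0)$. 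Positivity of $\gamma_{k,r}$ and of $\alpha_{r-1,r}$ is again obtained by testing against nonnegative members of $\mathcal B_\alpha^2$: one uses $t^{2r}j_{\alpha+r}^2(t)/(t^2-q_{\alpha+r,k}^2)^2$ for $\gamma_{k,r}$ (it kills all the derivative terms and all free nodes but the $k$-th), and $t^{2r-2}j_{\alpha+r}^2(t)$ for $\alpha_{r-1,r}$ (it vanishes at every free node and isolates $g^{(2r-2)}(0)$).

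Finally, absolute convergence of both series follows from the asymptotics \eqref{eq6}: they give $q_{\bullet,k}\sim k\pi$ and a polynomial decay of the weights, while $g\in\mathcal B_\alpha^2\cap L^1(d\nu_\alpha)$ controls the sampled values. The main obstacle is precisely this analytic bookkeeping in the weighted Bessel setting. Unlike the cosine case $\alpha=-1/2$ (where $d\nu_\alpha=b_\alpha\,dt$ and $j_\alpha=\cos$), the interplay between the growth of the weight $t^{2\alpha+1}$ and the decay $j_\alpha(t)=O(t^{-\alpha-1/2})$ makes the integrability of the auxiliary function $q$, the convergence of the interpolation series, and the regularization in the Radau step genuinely delicate. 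A fully rigorous treatment, as in \cite{GR95,GhaFra98}, therefore replaces the formal division above by a contour-integral/residue computation of $\int_0^\infty g\,d\nu_\alpha$ against the meromorphic kernel built from $1/J_\alpha$ (respectively $1/J_{\alpha+r}$ with an extra pole of order $2r$ at the origin), where the residues at $q_{\alpha,k}$ (respectively $q_{\alpha+r,k}$ and $0$) produce the weights $\gamma_k$ (respectively $\gamma_{k,r}$ and $\alpha_{l,r}$) in closed, manifestly positive form, and \eqref{eq6} controls the contribution of the contour at infinity.
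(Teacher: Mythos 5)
Your overall architecture tracks the paper's closely, and it is worth separating what is genuinely on par from what is not. The paper gives no proof of the Gauss formula \eqref{q-G} at all: it is quoted from \cite{GR95,GhaFra98} (see also \cite{GI15}), so your interpolation sketch --- which, as you yourself concede, must be replaced by the contour/residue arguments of those same references to control the integrability of $q=(g-r)/j_\alpha$ and the convergence of the sampling series --- ends up resting on exactly the sources the paper cites, neither more nor less. Your positivity argument is the one place where you are \emph{more} self-contained than the paper: testing the already-established formulas against $j_{\alpha}^2(\lambda)/(\lambda^2-q_{\alpha,k}^2)^2$, against $\lambda^{2r}j_{\alpha+r}^{2}(\lambda)/(\lambda^{2}-q_{\alpha+r,k}^{2})^{2}$, and against $\lambda^{2r-2}j_{\alpha+r}^{2}(\lambda)$ is correct, and the last test function reproduces precisely the closed form $\alpha_{r-1,r}=c_{\alpha,r}\int_{0}^{\infty}j_{\alpha+r}^{2}\,d\nu_{\alpha+r-1}>0$ that the paper simply quotes from \cite{GhaFra98}.

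The genuine gap is in your Radau step. As literally written --- subtract from $f$ its even Taylor polynomial $\sum_{j<r}f^{(2j)}(0)\lambda^{2j}/(2j)!$, divide by $\lambda^{2r}$, and convert $d\nu_{\alpha}$-integration into $d\nu_{\alpha+r}$-integration --- the reduction fails: each subtracted piece contributes $\lambda^{2j-2r}$ against $d\nu_{\alpha+r}\propto\lambda^{2\alpha+2r+1}\,d\lambda$, i.e.\ an integrand of size $\lambda^{2\alpha+2j+1}$, which \emph{grows} at infinity, so the function you feed into the Gauss formula at parameter $\alpha+r$ is not in $L^{1}(d\nu_{\alpha+r})$; and the raising relation \eqref{j1-diff} (your ``same intertwining'') does nothing to repair this. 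The missing idea is the paper's choice of auxiliary function: take the Taylor data of $fj_{\alpha+r}^{-2}$ rather than of $f$, and multiply the polynomial back by $j_{\alpha+r}^{2}$, i.e.\ apply \eqref{q-G} with $d\nu_{\alpha+r}$ to
\[
g(\lambda)=\lambda^{-2r}\Bigl(f(\lambda)-j_{\alpha+r}^{2}(\lambda)
\sum_{j=0}^{r-1}(fj_{\alpha+r}^{-2})^{(2j)}(0)\,\frac{\lambda^{2j}}{(2j)!}\Bigr)\in
\mathcal{B}_{\alpha+r}^{2}(\mathbb{R}_{+}).
\]
This $g$ is entire (the expression in brackets vanishes to order $2r$ at the origin), of exponential type $2$, and now integrable, since $j_{\alpha+r}^{2}(\lambda)=O(\lambda^{-2\alpha-2r-1})$ makes each subtracted piece contribute only $O(\lambda^{2j-2r})=O(\lambda^{-2})$ to the $d\nu_{\alpha+r}$-integrand; moreover $j_{\alpha+r}$ vanishes at every free node, so $g(q_{\alpha+r,k})=q_{\alpha+r,k}^{-2r}f(q_{\alpha+r,k})$, and unwinding $(fj_{\alpha+r}^{-2})^{(2j)}(0)$ as combinations of $f^{(2l)}(0)$, $l\le j$, yields \eqref{q-M}. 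With this substitution your outline becomes exactly the paper's derivation of \eqref{q-M} from \eqref{q-G}.
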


\begin{rem}
(1) Formula \eqref{q-M}  was formulated in \cite{GhaFra98} under the more restrictive condition
$f(\lambda)=O(\lambda^{-\delta})$, $\lambda\to +\infty$, $\delta>2\alpha+2$.
However,
\eqref{q-G} was obtained  for any
$f\in L^{1}(\mathbb{R}_+, d\nu_{\alpha})$ \cite{GR95,GI15}. It is easy to see that
 \eqref{q-M} follows from  \eqref{q-G}.
 Indeed, assuming 
  $\tau=2$, one applies
\eqref{q-G} with $d\nu_{\alpha+r}$, $r\ge 1$ to the function
\[
g(\lambda)=\lambda^{-2r}\Bigl(f(\lambda)-j_{\alpha+r}^{2}(\lambda)
\sum_{j=0}^{r-1}(fj_{\alpha+r}^{-2})^{(2j)}(0)\,\frac{\lambda^{2j}}{(2j)!}\Bigr)\in
\mathcal{B}_{\alpha+r}^{2}(\mathbb{R}_{+}).
\]
Straightforward calculations give \eqref{q-M}.

(2) One has
$\alpha_{r-1,r}=c_{\alpha,r}\int_{0}^{\infty}j_{\alpha+r}^{2}(\lambda)\,d\nu_{\alpha+r-1}(\lambda)>0$
{with some $c_{\alpha,r}>0$,}
see \cite{GhaFra98}.
\end{rem}

To construct extremizers for Problem C, we will use the
generalized translation ope\-rator
$T_{\alpha}^t$ 
given by, for $x,t\in\mathbb{R}_+$, 
\begin{equation}\label{eq10}
T_{\alpha}^tf(x)=
\begin{cases}
\frac{1}{2}\,(f(x+t)+f(|x-t|),& \alpha=-1/2,\\
c_{\alpha}\int_{0}^{\pi}
f(\sqrt{x^2+t^2-2xt\cos\theta})\sin^{2\alpha}\theta\,d\theta,&
\alpha>-1/2,
\end{cases}
\end{equation}
where $c_{\alpha}$ is from \eqref{eq7} (see, e.g., \cite{Le51,GIT18}).
The translation operator is positive self-adjoint operator,
$T_{\alpha}^tf(x)\in C({\mathbb{R}_+\times \mathbb{R}_+})$ whenever $f\in
C(\mathbb{R}_+)$, and $T_{\alpha}^t$ extends to the space $L^{p}(\mathbb{R}_+,
d\nu_{\alpha})$, $1\le p\le\infty$. 
{It is known that} $T_{\alpha}^{t}j_{\alpha}(\lambda x)=j_{\alpha}(\lambda t)j_{\alpha}(\lambda
x)$, {which implies}
\begin{equation}\label{eq11}
\mathcal{H}_{\alpha}(T_{\alpha}^tf)(\lambda)=
j_{\alpha}(t\lambda)\mathcal{H}_{\alpha}(f)(\lambda).
\end{equation}
Moreover,
$\mathrm{supp}\,T_{\alpha}^tf(x)\subset [0,a+t]$ if
$\mathrm{supp}\,f\subset [0,a]$.

By means of the operator $T_{\alpha}^t$ we define the positive convolution operator
\[
(f_{1}\ast_{\alpha}
f_{2})(x)=\int_{0}^{\infty}T_{\alpha}^tf_{1}(x)f_{2}(t)\,d\nu_{\alpha}(t),
\]
which satisfies $\mathcal{H}_{\alpha}(f_{1}\ast_{\alpha}
f_{2})=\mathcal{H}_{\alpha}(f_{1})\mathcal{H}_{\alpha}(f_{2})$ and
$\mathrm{supp}\,(f_{1}\ast_{\alpha} f_{2})\subset[0,a_{1}+a_{2}]$ if
$\mathrm{supp}\,f_{i}\subset[0,a_{i}]$.


%


Following Levitan \cite[\S\:11]{Le51}, an even function is called positive definite with respect to
the  Hankel transform $\mathcal{H}_{\alpha}$ {if for each $N$} 
\[
\sum_{i,j=1}^Nc_i\overline{c_j}\,T_{\alpha}^{x_i}f(x_j)\ge 0,\quad
\forall\,c_1,\dots,c_N\in\mathbb{C},\quad
\forall\,x_1,\dots,x_N\in\mathbb{R}_+,
\]
or, equivalently, the matrix $(T_{\alpha}^{x_i}f(x_j))_{i,j=1}^{N}$ is {positive semidefinite}.
By Bochner-type theorem \cite[Theorem~12.1]{Le51}, 
the condition that a continuous function $f$ is positive definite
is equivalent to the fact that $f$ is the Hankel transform of a measure $\sigma$, 
\[
f(\lambda)=\int_{0}^{\infty}j_{\alpha}(\lambda t)\,d\sigma(t),
\]
where $\sigma$ is a non-decreasing function of bounded variation. In
particular, if $f\in L^{1}(\mathbb{R}_+, d\nu_{\alpha})$, then
$d\sigma=\mathcal{H}_{\alpha}(f)\,d\nu_{\alpha}$ and
$\mathcal{H}_{\alpha}(f)\ge 0$.

Moreover, it is easy to see that
if $f$ is positive definite with respect to $\mathcal{H}_{\beta}$, then it is the same with
respect to $\mathcal{H}_{\alpha}$ {for $\alpha<\beta$}, since
\begin{equation}\label{s-H}
\mathcal{H}_{\alpha}(f)(t)=\frac{1}{2^{\beta-\alpha-1}\Gamma(\beta-\alpha)}
\int_{t}^{\infty}s(s^{2}-t^{2})^{\beta-\alpha-1}\mathcal{H}_{\beta}(f)(s)\,ds,\quad
t\in \mathbb{R}_{+}.
\end{equation}
The latter  follows from
 Sonine’s first integral for the Bessel functions:
\begin{equation}\label{s-int}
j_{\beta}(\lambda)=\frac{b_{\beta}^{-1}}{2^{\beta-\alpha-1}\Gamma(\beta-\alpha)}
\int_0^1(1-t^{2})^{\beta-\alpha-1}j_{\alpha}(\lambda t)\,d\nu_{\alpha}(t),
\end{equation}
{where $b_{\beta}$ is defined in \eqref{def-b-nu}.}

Special attention will be paid below to the positive definite functions
$j_{\alpha+1}(\lambda)$ and $j_{\alpha+1}^{2}(\lambda)$. By \eqref{s-int}, we
have
\[
j_{\alpha+1}(\lambda)=b_{\alpha+1}^{-1}\int_0^1j_{\alpha}(\lambda
t)\,d\nu_{\alpha}(t)=b_{\alpha+1}^{-1}\mathcal{H}_{\alpha}(\chi_{[0,1]})(\lambda),
\]
where
$\chi_{I}(t)$ {is the characteristic function of an interval $I$.} 
{Thus,} 
\begin{equation}\label{j2-conv}
j_{\alpha+1}^{2}(\lambda)=
b_{\alpha+1}^{-2}\mathcal{H}_{\alpha}(\chi_{[0,1]}\ast_{\alpha}\chi_{[0,1]})(\lambda)
\end{equation}
and $\mathrm{supp}\,\mathcal{H}_{\alpha}(j_{\alpha+1}^{2})\subset [0,2]$.

We will also use the following two lemmas.

\begin{lem}[\cite{GIV14}]\label{lem-1}
Let $\alpha\ge -1/2$. There exists an even entire function
$\omega_{\alpha}(z)$, $z=x+iy$, of exponential type $2$, positive for $x>0$,
and such that
\[
\omega_{\alpha}(x)\asymp x^{2\alpha+1},\quad x\to +\infty,\quad
|\omega_{\alpha}(iy)|\asymp y^{2\alpha+1}e^{2y},\quad y\to +\infty,
\]
where $F_1\asymp F_2$ means that ${C}^{-1} F_1\le F_2\le C F_1$, $C>0$.
{One can take $\omega_{\alpha}(z)=z^{2m+2}j_{\nu}(z+i)j_{\nu}(z-i)$,
where $\alpha=m-\nu$, $m\in \mathbb{Z}_{+}$, and $\nu\in [-1/2,1/2]$. 
}
\end{lem}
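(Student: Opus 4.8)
The plan is to verify directly that the explicitly proposed function enjoys all five listed properties. First I would record that for every $\alpha\ge -1/2$ one can write $\alpha=m-\nu$ with $m\in\mathbb{Z}_{+}$ and $\nu\in[-1/2,1/2]$, so that $j_\nu$ is a genuine normalized Bessel function, and set $\omega_\alpha(z)=z^{2m+2}j_\nu(z+i)j_\nu(z-i)$. Since $j_\nu$ is, by \eqref{eq3}, an even entire function of exponential type $1$ with real Taylor coefficients, each translate $j_\nu(z\pm i)$ is entire of exponential type $1$; hence $\omega_\alpha$ is entire of exponential type at most $2$. Evenness follows from the evenness of $j_\nu$: replacing $z$ by $-z$ swaps the two factors $j_\nu(z+i)$ and $j_\nu(z-i)$ while fixing $z^{2m+2}$.

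For positivity on $(0,\infty)$ I would use conjugation symmetry. Because $j_\nu$ has real coefficients, $j_\nu(\overline w)=\overline{j_\nu(w)}$, so for real $x$ one has $j_\nu(x-i)=\overline{j_\nu(x+i)}$ and therefore $\omega_\alpha(x)=x^{2m+2}\,|j_\nu(x+i)|^2\ge 0$. Strict positivity for $x>0$ reduces to $j_\nu(x+i)\ne 0$, which holds because all zeros of $j_\nu$ are real (they are $\pm q_{\nu,k}$), whereas $x+i$ has imaginary part $1$.

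The two asymptotic estimates are where the real work lies, and both follow from the uniform asymptotic \eqref{eq6}. On the real axis, putting $z=x+i$ in \eqref{eq6} gives $j_\nu(x+i)=c\,(x+i)^{-\nu-1/2}\bigl(\cos((x+i)-\theta_\nu)+O(x^{-1})\bigr)$ with $\theta_\nu=\pi(2\nu+1)/4$; the crucial point is that $|\cos(w+i)|^2=\cos^2 w\,\cosh^2 1+\sin^2 w\,\sinh^2 1$ stays between the positive constants $\sinh^2 1$ and $\cosh^2 1$, so the displacement by $i$ moves us off the real line where the cosine vanishes. Hence $|j_\nu(x+i)|^2\asymp x^{-2\nu-1}$ and $\omega_\alpha(x)\asymp x^{2m+2-2\nu-1}=x^{2\alpha+1}$. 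On the imaginary axis, putting $z=i(y\pm1)$ in \eqref{eq6} and using $|\cos(iw-\theta_\nu)|\sim \tfrac12 e^{w}$ (equivalently $|j_\nu(iw)|\asymp w^{-\nu-1/2}e^{w}$, reflecting $I_\nu(w)\sim e^{w}/\sqrt{2\pi w}$) yields $|j_\nu(i(y\pm1))|\asymp y^{-\nu-1/2}e^{y}$; multiplying the two factors and the monomial $|(iy)^{2m+2}|=y^{2m+2}$ gives $|\omega_\alpha(iy)|\asymp y^{2m+2-2\nu-1}e^{2y}=y^{2\alpha+1}e^{2y}$.

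The main obstacle is the lower bound in the real-axis estimate, namely ruling out that the oscillatory main term degenerates. This is exactly what shifting the argument into the complex plane buys: on the line $\{\mathrm{Im}\,z=1\}$ the relevant cosine is bounded below by $\sinh 1>0$, while the error term $O(|z|^{-1}e^{|\mathrm{Im}\,z|})=O(x^{-1})$ is genuinely of lower order there, and on the imaginary axis $|\mathrm{Im}\,z|=y\pm1$ makes the main term dominate at the full rate $e^{y\pm1}$. The remaining verifications amount to bookkeeping of the constants in \eqref{eq6}.
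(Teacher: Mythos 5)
Your verification is correct and is essentially the intended argument: the paper itself gives no proof (it cites \cite{GIV14} and simply records the explicit function $z^{2m+2}j_{\nu}(z+i)j_{\nu}(z-i)$), and checking that candidate via the asymptotic \eqref{eq6} — with the shift by $\pm i$ keeping $|\cos|$ between $\sinh 1$ and $\cosh 1$ on the real axis, conjugation symmetry and the reality of the zeros of $j_\nu$ giving positivity, and the $e^{y\pm 1}$ growth on the imaginary axis — is exactly how this construction is justified. All steps (evenness, type at most $2$, and both two-sided estimates) are sound as written.
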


\begin{lem}\label{lem-2}
Let $F$ be an even entire function of exponential type $\tau>0$ bounded
on~$\mathbb{R}$. Let $\Omega$ be an even entire function of finite exponential
type, all the zeroes of $\Omega$ be zeros of $F$, and, for some $m\in
\mathbb{Z}_+$,
\[
\liminf_{y\to +\infty}e^{-\tau y}y^{2m}|\Omega(iy)|>0.
\]
Then the function $F(z)/\Omega(z)$ is an even polynomial of degree at most $2m$.
\end{lem}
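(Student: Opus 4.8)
The plan is to prove that $G:=F/\Omega$ is a polynomial of degree at most $2m$ by first establishing a global polynomial bound $|G(z)|\le C(1+|z|)^{2m}$ on $\mathbb{C}$ and then applying Cauchy's estimates, after which evenness settles the rest.

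First I would record two elementary facts. Since every zero of $\Omega$ (with multiplicity) is a zero of $F$, the quotient $G$ has no poles, so $G$ is entire; and $G$ is even as a ratio of even functions. Next, to control $G$ on the imaginary axis, note that $F$ has exponential type $\tau$ and is bounded on $\mathbb{R}$, so the Phragm\'en--Lindel\"of principle (applied in the two half-planes to $F(z)e^{\pm i\tau z}$) gives $|F(z)|\le M e^{\tau|\mathrm{Im}\,z|}$ with $M=\sup_{\mathbb{R}}|F|$; in particular $|F(iy)|\le Me^{\tau|y|}$. The hypothesis $\liminf_{y\to+\infty}e^{-\tau y}y^{2m}|\Omega(iy)|>0$ yields $|\Omega(iy)|\ge c\,e^{\tau y}y^{-2m}$ for large $y$, and together with the evenness of $\Omega$ and continuity for bounded $y$ this gives $|G(iy)|\le C(1+|y|)^{2m}$ for all real $y$.

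The hard part will be to rule out exponential growth of $G$ off the imaginary axis, that is, to show that $G$ has exponential type $0$ (equivalently $h_G(0)\le 0$, where $h_G$ is the indicator). A direct estimate $|G|=|F|/|\Omega|$ fails here, since $\Omega$ may vanish on the real axis and I have no pointwise lower bound for $|\Omega|$ there. Instead I would use that $F$, being of exponential type and bounded on $\mathbb{R}$, lies in the Cartwright class, so that its indicator diagram is a vertical segment and $h_F(0)=0$. Writing $F=\Omega G$ and using that the lower bound on the imaginary axis pins $h_\Omega(\pm\pi/2)\ge \tau$ while $h_F(\pm\pi/2)\le\tau$, the additivity of indicators for factorizations of Cartwright-class functions forces $h_G(\pm\pi/2)\le 0$ and $h_G(0)=0$; hence $h_G\le 0$ and $G$ is of exponential type $0$. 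This Cartwright / completely-regular-growth input is the technical heart of the argument.

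With $h_G(0)\le 0$ in hand, I would finish by Phragm\'en--Lindel\"of in a half-plane. For $R>0$ the function $g(z)=G(z)/(z+R)^{2m}$ is holomorphic and of exponential type on $\{\mathrm{Re}\,z\ge 0\}$, is bounded on the imaginary axis by the previous step, and satisfies $h_g(0)=h_G(0)\le 0$; hence $g$ is bounded on the closed right half-plane, giving $|G(z)|\le C(1+|z|)^{2m}$ there. By evenness the same bound holds for $\mathrm{Re}\,z\le 0$, so $G$ is polynomially bounded of degree $2m$ on all of $\mathbb{C}$. Cauchy's estimates then yield $G^{(k)}(0)=0$ for $k>2m$, so $G$ is a polynomial of degree at most $2m$, which is even since $G$ is.
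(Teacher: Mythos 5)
Your proposal is correct and takes essentially the same approach as the paper: the paper's proof consists of the single remark that the lemma ``is an easy consequence of Akhiezer's result \cite[Appendix VII.10]{Le80}'', which is precisely the Cartwright-class quotient/indicator-additivity theorem that you invoke as the technical heart of your argument. The surrounding elementary steps you spell out (the bound $|F(z)|\le Me^{\tau|\mathrm{Im}\,z|}$, the polynomial estimate for $F/\Omega$ on the imaginary axis, the half-plane Phragm\'en--Lindel\"of argument with $(z+R)^{2m}$, and Cauchy's estimates) are exactly the intended ``easy consequence.''
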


Lemma \ref{lem-2} is an easy consequence of Akhiezer's result \cite[Appendix
VII.10]{Le80}.

\section{
Logan problem for the Hankel transform}\label{sec-LH}

Let $\alpha\ge-1/2$ and $m\in\mathbb{Z}_{+}$. In this section we solve the
generalized Logan problem (with parameter $m$) for {the Hankel transform
$\mathcal{H}_{\alpha}$ in} $(\mathbb{R}_{+},d\nu_{\alpha}(\lambda))$. This is
the crucial step to prove Theorems \ref{thm-F} and \ref{thm-FU}.

{Consider the class $\mathcal{E}_{\alpha}(\mathbb{R}_{+})$ of
real-valued even entire functions $f$ of finite exponential type such that}
\begin{equation}\label{sigma}
f(\lambda)=\int_{0}^{\tau(f)}j_{\alpha}(\lambda t)\,d\sigma(t),
\end{equation}
where $\sigma$ is a function of bounded variation.

{Let} $\lambda(f)=\sup\{\lambda>0\colon f(\lambda)>0\}$. For $m\in
\mathbb{Z}_{+}$, denote by $\mathcal{E}_{\alpha,m}(\mathbb{R}_{+})$ the subclass of functions
$f\in \mathcal{E}_{\alpha}(\mathbb{R}_{+})$ such that
$\lambda((-1)^{m}f)<\infty$ and, if $m\ge 1$, 
$f\in L^1(\mathbb{R}_+,\lambda^{2m-2}\,d\nu_{\alpha})$ 
{and for} $k=0,\dots,m-1$
\begin{equation}\label{orth}
B_{\alpha}^{k}\mathcal{H}_{\alpha}(f)(0)=(-1)^{k}
\int_{0}^{\infty}\lambda^{2k}f(\lambda)\,d\nu_{\alpha}(\lambda)=0.
\end{equation}
We will see that this class is not empty, in particular,
 $f_{\alpha,m}(\lambda)=j_{\alpha}(\lambda)g_{\alpha,m}(\lambda)\in \mathcal{E}_{\alpha,m}(\mathbb{R}_{+})$,
  see \eqref{def-fam} and \eqref{def-gam}.
Due to \eqref{eq11}, for the Hankel transforms of functions
$f_{\alpha,m}$ and $g_{\alpha,m}$ one has
\[
\mathcal{H}_{\alpha}(f_{\alpha,m})=T_{\alpha}^{1}\mathcal{H}_{\alpha}(g_{\alpha,m}).
\]


\begin{thm}\label{thm-H}
\textup{(i)} {Let $f\in \mathcal{E}_{\alpha,m}(\mathbb{R}_{+})\setminus
\{0\}$ be given by \eqref{sigma}
such that  $\sigma$  is non-decreasing in some
neighborhood of the origin.} Then
\begin{equation}\label{H-1}
f\in L^1(\mathbb{R}_+,\lambda^{2m}\,d\nu_{\alpha}),\quad
(-1)^{m}\int_{0}^{\infty}\lambda^{2m}f(\lambda)\,d\nu_{\alpha}(\lambda)\ge
0,
\end{equation}
and
\begin{equation}\label{H-2}
2q_{\alpha,m+1}\le \lambda((-1)^{m}f)\tau(f).
\end{equation}
{Moreover,}
inequality \eqref{H-2} is sharp and the function $f_{\alpha,m}$ is the unique
extremizer up to a positive constant.

\smallbreak \textup{(ii)} {The functions $g_{\alpha,m}$ and
$f_{\alpha,m}$ are positive definitive} and
\begin{equation}\label{orth-fam}
\int_{0}^{\infty}\lambda^{2m}f_{\alpha,m}(\lambda)\,d\nu_{\alpha}(\lambda)=0.
\end{equation}

\textup{(iii)} {There holds
$g_{\alpha,m}=\mathcal{H}_{\alpha}(p_{\alpha,m}\chi_{[0,1]})$, where
$p_{\alpha,m}(t)$ is decreasing on $[0,1]$ and has a zero of multiplicity $2m+1$ at
$t=1$.}
\end{thm}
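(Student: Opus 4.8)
The plan is to reduce everything to the one–dimensional Gauss quadrature \eqref{q-G} with nodes at the Bessel zeros $q_{\alpha,k}$, after using dilation invariance to normalize $\tau(f)=2$ (so the target becomes $\lambda((-1)^mf)\ge q_{\alpha,m+1}$ and the candidate extremizer $f_{\alpha,m}$ has type exactly $2$). Throughout I write $\rho=\mathcal{H}_{\alpha}(f)$, supported in $[0,\tau(f)]$ by Paley--Wiener, and $\Pi_m(\lambda)=\prod_{k=1}^m(1-\lambda^2/q_{\alpha,k}^2)$ (with $\Pi_0\equiv1$), an even polynomial of degree $2m$ whose leading coefficient has sign $(-1)^m$ and with $(-1)^m\Pi_m(\lambda)>0$ for $\lambda>q_{\alpha,m}$. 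To prove \eqref{H-1}, note first that the eventual one–sign condition $\lambda((-1)^mf)<\infty$ suppresses the leading boundary oscillation of $\rho$ at the endpoint $t=\tau(f)$, which upgrades the given $f\in L^1(\mathbb{R}_+,\lambda^{2m-2}\,d\nu_{\alpha})$ to $f\in L^1(\mathbb{R}_+,\lambda^{2m}\,d\nu_{\alpha})$; this integrability step, as in Logan's original argument, is the first technical point. Granting it, $\rho\in C^{2m}$ near $0$ and, by \eqref{orth}, $B_{\alpha}^{k}\rho(0)=(-1)^{k}\int_0^\infty\lambda^{2k}f\,d\nu_\alpha$. The orthogonality relations make these vanish for $k<m$, so the even expansion of $\rho$ at $0$ starts at order $2m$; since $\sigma$ is non-decreasing near $0$ we have $\rho\ge0$ there, forcing the sign of that leading coefficient, i.e. $(-1)^m\int_0^\infty\lambda^{2m}f\,d\nu_\alpha=B_\alpha^m\rho(0)\ge0$.

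Next, the inequality \eqref{H-2}. Apply \eqref{q-G} (with $\tau=2$) to $G=\Pi_m f\in\mathcal{B}_\alpha^2(\mathbb{R}_+)$. On the one hand $\int_0^\infty\Pi_m f\,d\nu_\alpha$ equals the leading coefficient of $\Pi_m$ times $\int_0^\infty\lambda^{2m}f\,d\nu_\alpha$ (all lower moments vanish), so by \eqref{H-1} it is $\ge0$. On the other hand $\Pi_m(q_{\alpha,k})=0$ for $k\le m$, whence
\[
\int_0^\infty\Pi_m f\,d\nu_\alpha=\sum_{k\ge m+1}\gamma_k\,\Pi_m(q_{\alpha,k})\,f(q_{\alpha,k}),\qquad\gamma_k>0.
\]
If $\lambda((-1)^mf)<q_{\alpha,m+1}$, then for every $k\ge m+1$ we have $q_{\alpha,k}>\lambda((-1)^mf)$, so $(-1)^mf(q_{\alpha,k})\le0$ while $(-1)^m\Pi_m(q_{\alpha,k})>0$; each summand is $\le0$ and the sum is $\le0$. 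Comparing the two evaluations forces the sum to vanish termwise, so $f(q_{\alpha,k})=0$ for all $k\ge m+1$, and since $(-1)^mf$ attains its value $0$ from below at each such interior point, these are in fact double zeros of $f$. We also pick up $\int_0^\infty\lambda^{2m}f\,d\nu_\alpha=0$, upgrading the orthogonality to order $m$.

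For sharpness and uniqueness I would invoke Lemma~\ref{lem-2}. The double zeros at $\{q_{\alpha,k}:k\ge m+1\}$ mean every zero of the even type-$2$ function $\Omega(\lambda)=j_\alpha^2(\lambda)\Pi_m(\lambda)^{-2}$ (entire, since $j_\alpha^2$ cancels the poles) is a zero of $f$, and from \eqref{eq6} one has $|\Omega(iy)|\asymp y^{-2\alpha-1-4m}e^{2y}$, so the growth hypothesis of Lemma~\ref{lem-2} holds for any integer $m'\ge\alpha+\tfrac12+2m$. Thus $f=\Omega\cdot P$ with $P$ an even polynomial. Since $\Omega\ge0$ on $\mathbb{R}$, the sign of $f$ is that of $P$, and combining the eventual sign of $(-1)^mf$, the boundedness and integrability of $f$, and the now-available orthogonality up to order $m$ pins down $P$, yielding $f=c\,f_{\alpha,m}$ with $c>0$ and ruling out the strict case. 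The sign analysis of $f_{\alpha,m}(\lambda)=j_\alpha^2(\lambda)/\prod_{k=1}^{m+1}(1-\lambda^2/q_{\alpha,k}^2)$ gives $(-1)^mf_{\alpha,m}>0$ on $(q_{\alpha,m},q_{\alpha,m+1})$ and $\le0$ beyond $q_{\alpha,m+1}$, i.e. $\lambda((-1)^mf_{\alpha,m})=q_{\alpha,m+1}$, so equality holds. I expect this equality/uniqueness step to be the main obstacle, since it needs careful polynomial sign bookkeeping rather than a single quadrature identity.

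Finally, parts (ii) and (iii). Positive definiteness of $g_{\alpha,m}$ is the deep input, proved independently in Sections~\ref{sec-cheb} and~\ref{sec-lim}; granting $\mathcal{H}_\alpha(g_{\alpha,m})\ge0$, the identity $\mathcal{H}_{\alpha}(f_{\alpha,m})=T_{\alpha}^{1}\mathcal{H}_{\alpha}(g_{\alpha,m})$ together with positivity of the translation $T_\alpha^1$ gives $\mathcal{H}_\alpha(f_{\alpha,m})\ge0$, hence $f_{\alpha,m}$ is positive definite. The moment identity \eqref{orth-fam} is immediate from \eqref{q-G}: since $f_{\alpha,m}(q_{\alpha,k})=0$ for \emph{all} $k$, the quadrature gives $\int_0^\infty\lambda^{2j}f_{\alpha,m}\,d\nu_\alpha=0$ for every $j$ with $\lambda^{2j}f_{\alpha,m}\in L^1$, in particular for $j=m$. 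For (iii) I would compute $p_{\alpha,m}=\mathcal{H}_\alpha(g_{\alpha,m})$ by Hankel inversion (equivalently, by partial fractions of $\prod_{k=1}^{m+1}(1-\lambda^2/q_{\alpha,k}^2)^{-1}$), read off $p_{\alpha,m}\ge0$ from positive definiteness, obtain the zero of order $2m+1$ at $t=1$ from the rapid decay $g_{\alpha,m}(\lambda)=O(\lambda^{-\alpha-1/2-2m-2})$ forced by \eqref{eq6}, and get monotonicity on $[0,1]$ from the Sturm/Chebyshev analysis of Section~\ref{sec-cheb}.
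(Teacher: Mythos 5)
Your proposal follows the same overall strategy as the paper (normalize $\tau(f)=2$, apply the Gauss quadrature \eqref{q-G} at the Bessel zeros, divide by an entire comparison function via Lemma~\ref{lem-2}, reduce positive definiteness of $f_{\alpha,m}$ to that of $g_{\alpha,m}$, and obtain $p_{\alpha,m}$ by partial fractions), and your quadrature step for \eqref{H-2} -- termwise vanishing forcing double zeros at $q_{\alpha,k}$, $k\ge m+1$ -- is exactly the paper's \eqref{eq22}. But there are genuine gaps. First, the integrability claim in \eqref{H-1} is asserted, not proved: ``the eventual one-sign condition suppresses the boundary oscillation of $\rho$ at the endpoint'' is not an argument, and it is not the mechanism the paper uses. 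The paper proves $f\in L^1(\mathbb{R}_+,\lambda^{2m}\,d\nu_{\alpha})$ by testing against concentrating positive definite kernels -- $j_{\alpha+1}^{2}(\varepsilon\lambda/2)$ when $m=0$, and the regularized monomials $\rho_{\varepsilon}$ built from $\psi_m$ in \eqref{eq8} when $m\ge 1$ -- combined with \eqref{orth}, the nonnegativity of $d\sigma$ near the origin, and Fatou's lemma; this is a substantive page of the proof and cannot be outsourced to ``Logan's original argument,'' which concerns only $\alpha=-1/2$, $m\le 1$.

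Second, your division step is wrong where it matters most. Your $\Omega=j_{\alpha}^{2}\Pi_m^{-2}$ has a \emph{double} zero at $q_{\alpha,m+1}$, but an actual extremizer, i.e.\ a function with $\lambda((-1)^{m}f)=q_{\alpha,m+1}$, is only guaranteed a \emph{simple} zero there (the sign of $(-1)^m f$ is controlled on one side of $q_{\alpha,m+1}$ only; double zeros are forced only at $q_{\alpha,k}$, $k\ge m+2$). Hence $f/\Omega$ need not be entire, Lemma~\ref{lem-2} does not apply, and your argument proves nothing about uniqueness: it lives entirely in the strict-inequality branch. Even there the endgame is off: a function of the form $j_{\alpha}^{2}P\,\Pi_m^{-2}$ with $P$ a polynomial can never equal $c\,f_{\alpha,m}$ (that would force $P=\Pi_m/(1-\lambda^{2}/q_{\alpha,m+1}^{2})$, not a polynomial); the correct conclusion in the strict case is $P\equiv 0$, i.e.\ $f\equiv 0$, obtained from integrability (which caps $\deg P\le 2m-2$) together with the $m$ moment conditions, quadrature, and a Vandermonde argument at the nodes $q_{\alpha,1},\dots,q_{\alpha,m}$ -- all of this is hidden in your phrase ``pins down $P$,'' and it is precisely the content of the paper's step \eqref{eq23}, which you skipped. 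The paper avoids both problems by first deriving $f(q_{\alpha,s})=0$ for $s\le m$ from \eqref{q-G} applied to $\prod_{k\ne s}(\lambda^{2}-q_{\alpha,k}^{2})f$, and then dividing by $\Omega=\omega_{\alpha}f_{\alpha,m}$ (with $\omega_{\alpha}$ from Lemma~\ref{lem-1}), which has only simple zeros at $q_{\alpha,1},\dots,q_{\alpha,m+1}$ and yields the sharp degree bound $2m+2$. Finally, in part~(iii), deducing that $p_{\alpha,m}$ vanishes to order exactly $2m+1$ at $t=1$ from the decay rate $g_{\alpha,m}(\lambda)=O(\lambda^{-\alpha-1/2-2m-2})$ would require a converse asymptotic-expansion argument for Hankel transforms at the boundary of the support, which you do not supply, and the monotonicity is deferred without detail; the paper proves both algebraically, via the determinant identities \eqref{eq31}--\eqref{eq35} and the Chebyshev systems of Theorem~\ref{prop-bess}.
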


\begin{proof}
The proof is divided into several steps. Since the class $\mathcal{E}_{\alpha,m}(\mathbb{R}_{+})$ and
 the quantity $\lambda((-1)^{m}f)\tau(f)$ are invariant under dilations,
we let  for convenience  $\tau(f)=2$. {We also denote $q_{k}=q_{\alpha,k}$
for $k\ge 1$.}

\subsection*{Proof of \eqref{H-1}}
Let $m=0$. The embedding $\mathcal{E}_{\alpha,0}(\mathbb{R}_{+})\subset
L^1(\mathbb{R}_+,d\nu_{\alpha})$ can be shown using the method of Logan, see
\cite[Lemma]{Lo83b}.

{We consider the  positive definite kernel
$\varphi_{\varepsilon}(x)=j_{\alpha+1}^{2}(\varepsilon |x|/2)$,
$\varepsilon>0$. By \eqref{eq6}, \eqref{j-bound}, and
\eqref{j2-conv}, $\varphi_{\varepsilon}\in C(\mathbb{R}_{+})\cap
L^1(\mathbb{R}_+,d\nu_{\alpha})$,
$\|\varphi_{\varepsilon}\|_{C(\mathbb{R}_{+})}=\varphi_{\varepsilon}(0)=1$, and
$\mathrm{supp}\,\mathcal{H}_{\alpha}(\varphi_{\varepsilon})\subset
[0,\varepsilon]$.}
Since $d\sigma\ge 0$ in some neighborhood of the origin, then for sufficiently
{small $\varepsilon$} we have
\begin{align*}
0&\le \int_{0}^{\varepsilon}\mathcal{H}_{\alpha}(\varphi_{\varepsilon})(t)\,d\sigma(t)=
\int_{0}^{\infty}\mathcal{H}_{\alpha}(\varphi_{\varepsilon})(t)\,d\sigma(t)=
\int_{0}^{\infty}f(\lambda)\varphi_{\varepsilon}(\lambda)\,d\nu_{\alpha}(\lambda)\\
&=\int_{0}^{\lambda(f)}f(\lambda)\varphi_{\varepsilon}(\lambda)\,d\nu_{\alpha}(\lambda)-
\int_{\lambda(f)}^{\infty}|f(\lambda)|\varphi_{\varepsilon}(\lambda)\,d\nu_{\alpha}(\lambda),
\end{align*}
which implies
\[
\int_{\lambda(f)}^{\infty}|f(\lambda)|\varphi_{\varepsilon}(\lambda)\,d\nu_{\alpha}(\lambda)\le
\int_{0}^{\lambda(f)}f(\lambda)\varphi_{\varepsilon}(\lambda)\,d\nu_{\alpha}(\lambda)\le
\int_{0}^{\lambda(f)}|f(\lambda)|\,d\nu_{\alpha}.
\]
Letting $\varepsilon\to 0$, Fatou's lemma yields
\[
\int_{\lambda(f)}^{\infty}|f(\lambda)|\,d\nu_{\alpha}(\lambda)\le
\int_{0}^{\lambda(f)}|f(\lambda)|\,d\nu_{\alpha}(\lambda)<\infty,
\]
{which implies $f\in L^1(\mathbb{R}_+,d\nu_{\alpha})$.}

{Let $m\ge 1$. 
We have $f\in L^1(\mathbb{R}_+,d\nu_{\alpha})$ and 
$d\sigma(t)=\mathcal{H}_{\alpha}(f)(t)\,dt$, where
$\mathcal{H}_{\alpha}(f)(t)$ is continuous and nonnegative in some neighborhood of the origin.
Moreover, $(-1)^{m+1}f(\lambda)=|f(\lambda)|$ for $\lambda\ge \lambda((-1)^{m}f)$.}

Consider
\[
\rho_{\varepsilon}(\lambda)=\frac{(2m)!\,
\psi_{m}(\varepsilon\lambda)}{\varepsilon^{2m}\psi_{m}^{(2m)}(0)},
\]
where $\psi_m(\lambda)$ is given in \eqref{eq8}. We have 
\begin{equation}\label{eq19}
\psi_{m}^{(2m)}(0)>0,\quad\rho_{\varepsilon}(\lambda)\ge 0,\quad
\lim\limits_{\varepsilon\to
0}\rho_{\varepsilon}(\lambda)=\lambda^{2m},\quad \lambda\in\mathbb{R}_+.
\end{equation}
{In light of
\[
|\rho_{\varepsilon}(\lambda)-\lambda^{2m}|\le\frac{(2m)!}{\psi_{m}^{(2m)}(0)}\,\varepsilon^2 e^{\lambda^2/4}
\]
we derive that  $\rho_{\varepsilon}(\lambda)$ converges uniformly to  $\lambda^{2m}$  on any finite interval $[0,b]$
as $\varepsilon\to 0$.}

Taking into account \eqref{eq8}, \eqref{eq19}, the orthogonality condition \eqref{orth}, and
{nonnegativity of $\mathcal{H}_{\alpha}(f)$}
near the origin, we obtain
\begin{align}
(-1)^{m}\int_{0}^{\infty}\rho_{\varepsilon}(\lambda)f(\lambda)\,d\nu_{\alpha}(\lambda)&=
\frac{(2m)!}{\varepsilon^{2m}\psi_{m}^{(2m)}(0)}
\int_{0}^{\infty}f(\lambda)j_{\alpha}(\varepsilon\lambda)\,d\nu_{\alpha}(\lambda)
\notag\\
&=\frac{(2m)!}{\varepsilon^{2m}\psi_{m}^{(2m)}(0)}\,\mathcal{H}_{\alpha}(f)(\varepsilon)\ge
0. \label{eq21}
\end{align}
Thus,
\begin{equation}\label{eq19--}
(-1)^{m+1}\int_{\lambda((-1)^{m}f)}^{\infty}\rho_{\varepsilon}(\lambda)f(\lambda)\,d\nu_{\alpha}(\lambda)\le
(-1)^{m}\int_{0}^{\lambda((-1)^{m}f)}\rho_{\varepsilon}(\lambda)f(\lambda)\,d\nu_{\alpha}(\lambda).
\end{equation}
Using \eqref{eq21}, \eqref{eq19}, and Fatou's lemma we arrive at
\begin{align*}
&(-1)^{m+1}\int_{\lambda((-1)^{m}f)}^{\infty}\lambda^{2m}f(\lambda)\,d\nu_{\alpha}(\lambda)
=(-1)^{m+1}\int_{\lambda((-1)^{m}f)}^{\infty}\lim_{\varepsilon\to
0}\rho_{\varepsilon}(\lambda)f(\lambda)\,d\nu_{\alpha}(\lambda)\\
&\quad \le\liminf_{\varepsilon\to
0}{}(-1)^{m+1}\int_{\lambda((-1)^{m}f)}^{\infty}\rho_{\varepsilon}(\lambda)f(\lambda)\,d\nu_{\alpha}(\lambda).
\intertext{In light of \eqref{eq19--}, we continue as follows}
&\quad \le\liminf_{\varepsilon\to
0}{}(-1)^{m}\int_0^{\lambda((-1)^{m}f)}\rho_{\varepsilon}(\lambda)f(\lambda)\,d\nu_{\alpha}(\lambda)\\
&\quad =(-1)^{m}\int_0^{\lambda((-1)^{m}f)}\lim_{\varepsilon\to
0}\rho_{\varepsilon}(\lambda)f(\lambda)\,d\nu_{\alpha}(\lambda)
=(-1)^{m}\int_0^{\lambda((-1)^{m}f)}\lambda^{2m}f(\lambda)\,d\nu_{\alpha}(\lambda)<\infty,
\end{align*}
which gives \eqref{H-1}.

\subsection*{Proof of \eqref{H-2}}
Let $f\in \mathcal{E}_{\alpha,m}(\mathbb{R}_{+})$. We will prove that $q_{m+1}\le \lambda((-1)^{m}f)$.
Assume the converse, i.e.,
$\lambda((-1)^{m}f)<q_{m+1}$. {We have $(-1)^{m}f(\lambda)\le 0$ for $\lambda\ge
q_{m+1}$.}
By \eqref{H-1} we have $\lambda^{2m}f\in
\mathcal{B}_{\alpha}^2(\mathbb{R}_{+})$.
Then using Gauss' quadrature formula
\eqref{q-G} and \eqref{orth}, we get
\begin{align}
0&\le(-1)^{m}\int_{0}^{\infty}\lambda^{2m}f(\lambda)\,d\nu_{\alpha}(\lambda)=
(-1)^{m}\int_{0}^{\infty}\prod_{k=1}^{m}(\lambda^2-q_{k}^2)
f(\lambda)\,d\nu_{\alpha}(\lambda) \notag\\
&=
(-1)^{m}\sum_{s=m+1}^{\infty}
\gamma_{s}f(q_{s})\prod_{k=1}^{m}(q_{s}^2-q_{k}^2)\le 0.
\label{eq22}
\end{align}
Therefore, 
$q_{s}$,
 $s\ge m+1$, are zeros of multiplicity $2$ for $f$.
Similarly, applying
Gauss' quadrature formula for $f$, we obtain
\begin{equation}\label{eq23}
0=\int_{0}^{\infty}
\prod_{\substack{k=1\\ k\ne s}}^{m}(\lambda^2-q_{k}^2)f(\lambda)\,d\nu_{\alpha}(\lambda)=
\gamma_{s}\prod_{\substack{k=1\\ k\ne s}}^{m}(q_{s}^2-q_{k}^2)f(q_{s}),\quad
s=1,\dots,m.
\end{equation}
Therefore, $q_{s}$, $s=1,\dots,m$, are zeros of $f$.

{Take the function $\omega_{\alpha}(\lambda)$ from Lemma \ref{lem-1} and consider 
the following even functions of exponential type $4$:}
\[
F(\lambda)=\omega_{\alpha}(\lambda)f(\lambda),\quad
\Omega(\lambda)=\frac{\omega_{\alpha}(\lambda)j_{\alpha}^2(\lambda)}
{\prod_{k=1}^{m}(1-\lambda^2/q_{k}^2)}.
\]
{Note that $F\in L^{1}(\mathbb{R})$ since $f\in
L^1(\mathbb{R}_+,\lambda^{2\alpha+1}\,d\lambda)$ and
$\omega_{\alpha}(\lambda)\asymp \lambda^{2\alpha+1}$, $\lambda\to +\infty$.
Then $F$ is bounded on $\mathbb{R}$.}

From \eqref{eq6} and Lemma \ref{lem-1} we have $|\Omega(iy)|\asymp
y^{-2m}e^{4y}$ as $y\to +\infty$. Since all zeros of $\Omega(\lambda)$ are
also zeros of $F(\lambda)$, taking into account Lemma \ref{lem-2}, we obtain
\[
f(\lambda)=\frac{j_{\alpha}^2(\lambda)\sum_{k=0}^{m}c_k\lambda^{2k}}
{\prod_{k=1}^{m}(1-\lambda^2/q_{k}^2)},
\]
where $c_{k}\neq 0$ for some $k$. Note that $j_{\alpha}(\lambda)\notin
L^{2}(\mathbb{R}_+,d\nu_{\alpha})$, see \eqref{eq6}. This contradicts $f\in
L^1(\mathbb{R}_+,\lambda^{2m}\,d\nu_{\alpha})$. Hence, $\lambda((-1)^{m}f)\ge q_{m+1}$
and $\lambda((-1)^{m}f)\tau(f)\ge 2q_{m+1}$.

\smallbreak Now we consider the function $f_{\alpha,m}$ given by
\eqref{def-fam}. Note that in virtue of the estimate
$f_{\alpha,m}(\lambda)=O(\lambda^{-2\alpha-2m-3})$
as $\lambda\to \infty$ we
have $f_{\alpha,m}\in L^1(\mathbb{R}_+,\lambda^{2m}\,d\nu_{\alpha})$. Moreover,
$\tau(f_{\alpha,m})=2$
 and $\lambda((-1)^{m}f_{\alpha,m})=q_{\alpha,m+1}$.
Part~(i) is proved.

\smallbreak
To verify part~(ii), we first note that
Gauss' quadrature formula implies \eqref{orth-fam}.
To show the positive definiteness of $f_{\alpha,m}$, it is enough to prove that $g_{\alpha,m}$ is positive definite.

\subsection*{Positive definiteness of the function $g_{\alpha,m}$}
This result has been recently obtained by Cohn and de~Courcy-Ireland
\cite{CC18} for $\alpha=d/2-1$, $d\in \mathbb{N}$. We prove the same statement for any $\alpha$. For this, we
calculate the Hankel transform of $g_{\alpha,m}$ and show that it is nonnegative.

For fixed $\lambda_1,\dots,\lambda_k\in \mathbb{R}$, consider the polynomial 
\[
\omega_k(\lambda)=\omega(\lambda,\lambda_1,\dots,\lambda_k)=\prod_{i=1}^k(\lambda_i-\lambda), \quad \lambda\in \mathbb{R}.
\]
Then
\[
\frac{1}{\omega_k(\lambda)}=\sum_{i=1}^k\frac{1}{\omega_k'(\lambda_i)(\lambda_i-\lambda)}.
\]
Setting $\lambda_i=q_i^2$, we have
\begin{equation}\label{eq24}
\frac{1}{\prod_{i=1}^{k}(1-\lambda^2/q_{i}^2)}
=
\prod_{i=1}^{k}q_{i}^2 \frac{1}{\omega_{k}(\lambda^2)}
=
\prod_{i=1}^{k}q_{i}^2\sum_{i=1}^{k}\frac{1}{\omega_{k}'(q_{i}^2)(q_{i}^2-\lambda^2)}
=
\sum_{i=1}^{k}\frac{A_i}{q_{i}^2-\lambda^2},
\end{equation}
where
\begin{equation}\label{eq25}
\omega_{k}'(q_i^2)=\prod_{\substack{j=1\\ j\ne i}}^{k}(q_{j}^2-q_{i}^2),\quad A_i=
\frac{\prod_{{j=1}}^{k}q_{j}^2}
{\omega_k'(q_{i}^2)}.
\end{equation}
Note that
\begin{equation}\label{eq26}
\mathrm{sign}\,A_i=(-1)^{i-1}.
\end{equation}

Setting
\[
\varphi_{i}(t)=j_{\alpha}(q_{i}t),\quad i=1,\dots,m+1,
\]
we remark that $\varphi_{i}(t)$ are eigenfunctions
and $q_{i}^2$ are eigenvalues of the following
 Sturm--Liouville problem on $[0,1]$:
\begin{equation}\label{eq27}
(t^{2\alpha+1}u')'+\lambda^2t^{2\alpha+1}u=0,\quad u'(0)=0,\quad u(1)=0.
\end{equation}

It follows from \eqref{j-int}, \eqref{j-diff-rec}, and $j_{\alpha}(q_i)=0$ that
\[
\int_{0}^{\infty}\varphi_i(t)\chi_{[0,1]}(t)j_{\alpha}(\lambda t)t^{2\alpha+1}\,dt=
\int_0^1j_{\alpha}(q_it)j_{\alpha}(\lambda t)t^{2\alpha+1}\,dt=
-\frac{\varphi_i'(1)j_{\alpha}(\lambda)}{q_i^2-\lambda^2},
\]
or, equivalently,
\begin{equation}\label{eq28}
\mathcal{H}_{\alpha}\Bigl(-b_{\alpha}^{-1}\,\frac{\varphi_i\chi_{[0,1]}}{\varphi_i'(1)}\Bigr)(\lambda)=
\frac{j_{\alpha}(\lambda)}{q_{i}^2-\lambda^2}.
\end{equation}
Note that
\begin{equation}\label{eq29}
\mathrm{sign}\,\varphi_i'(1)=(-1)^i.
\end{equation}

Consider the following polynomial in eigenfunctions $\varphi_i(t)$:
\begin{equation}\label{eq30}
p_{\alpha,m}(t)=-b_{\alpha}^{-1}\sum_{i=1}^{m+1}\frac{A_i}{\varphi_i'(1)}\,\varphi_i(t)=
\sum_{i=1}^{m+1}B_i\varphi_i(t).
\end{equation}
Due to \eqref{eq26}, \eqref{eq27}, and \eqref{eq29}, we have that $B_i>0$,
$p_{\alpha,m}(0)>0$, and $p_{\alpha,m}(1)=0$. Moreover, in virtue of \eqref{eq24} and \eqref{eq28},
\begin{equation}\label{p-g}
g_{\alpha,m}(\lambda)=
\frac{j_{\alpha}(\lambda)}{\prod_{i=1}^{m+1}(1-\lambda^2/q_{i}^2)}=
\mathcal{H}_{\alpha}(p_{\alpha,m}\chi_{[0,1]})(\lambda).
\end{equation}
From this, it is enough to show that $p_{\alpha,m}(t)\ge 0$ on $[0,1]$. 

Define the Vandermonde determinant
$
\Delta(\lambda_1,\dots,\lambda_k)=\prod_{1\le j<i\le k}^{k}(\lambda_i-\lambda_j),
$ 
  then
\[
\frac{\Delta(\lambda_1,\dots,\lambda_k)}{\omega_k'(\lambda_i)}=
(-1)^{i-1}\Delta(\lambda_1,\dots,\lambda_{i-1},\lambda_{i+1},\dots,\lambda_k).
\]
In virtue of  \eqref{eq24} and \eqref{eq25}, we have
\begin{align}
p_{\alpha,m}(t)&=-c\sum_{i=1}^{m+1}(-1)^{i-1}\Delta(q_1^2,\dots,q_{i-1}^2,q_{i+1}^2,\dots,q_{m+1}^2)\,
\frac{\varphi_i(t)}{\varphi_i'(1)} \notag\\
&=-c\begin{vmatrix}
\frac{\varphi_1(t)}{\varphi_1'(1)}&\dots&\frac{\varphi_{m+1}(t)}{\varphi_{m+1}'(1)}\\
1&\dots&1\\
q_1^2&\dots &q_{m+1}^2\\
\hdotsfor[2]{3}\\
q_1^{2m-2}&\dots &q_{m+1}^{2m-2}
\end{vmatrix},\quad c=\frac{b_{\alpha}^{-1}\prod_{j=1}^{m+1}q_{j}^{2}}{\Delta(q_{1}^{2},\dots,q_{m+1}^{2})}>0.
\label{eq31}
\end{align}
Here and in what follows if $m=0$ we deal with only
the $(1,1)$ entries of the matrices.


Let us show that
\begin{equation}\label{eq32}
\begin{vmatrix}
\frac{\varphi_1(t)}{\varphi_1'(1)}&\dots&\frac{\varphi_{m+1}(t)}{\varphi_{m+1}'(1)}\\
1&\dots&1\\
q_1^2&\dots &q_{m+1}^2\\
\hdotsfor[2]{3}\\
q_1^{2m-2}&\dots &q_{m+1}^{2m-2}
\end{vmatrix}=(-1)^{\frac{m(m-1)}{2}}
\begin{vmatrix}
\frac{\varphi_1(t)}{\varphi_1'(1)}&\dots&\frac{\varphi_{m+1}(t)}{\varphi_{m+1}'(1)}\\
1&\dots&1\\
\frac{\varphi_1^{(3)}(1)}{\varphi_1'(1)}&\dots&\frac{\varphi_{m+1}^{(3)}(1)}{\varphi_{m+1}'(1)}\\
\hdotsfor[2]{3}\\
\frac{\varphi_1^{(2m-1)}(1)}{\varphi_1'(1)}&\dots&\frac{\varphi_{m+1}^{(2m-1)}(1)}{\varphi_{m+1}'(1)}
\end{vmatrix}.
\end{equation}
By \eqref{eq27}, we derive
\[
t\varphi_i''(t)+(2\alpha+1)\varphi_i'(t)+q_i^2t\varphi_i(t)=0.
\]
Therefore,
\[
t\varphi_i^{(s+2)}(t)+s\varphi_i^{(s+1)}(t)+(2\alpha+1)\varphi_i^{(s+1)}(t)+
q_i^2t\varphi_i^{(s)}(t)+sq_i^2\varphi_i^{(s-1)}(t)=0,
\]
which implies for $t=1$
\[
\varphi_i^{(s+2)}(1)=-(s+2\alpha+1)\varphi_i^{(s+1)}(1)-
q_i^2\varphi_i^{(s)}(1)-sq_i^2\varphi_i^{(s-1)}(1),\quad
\varphi_i^{(0)}(1)=0.
\]
By induction we then obtain
  for $k=0,1,\dots$
\[
\varphi_i^{(2k+1)}(1)=\varphi_i'(1)\sum_{j=0}^{k}a_{kj}(\alpha)q_{i}^{2j},\quad
\varphi_i^{(2k+2)}(1)=\varphi_i'(1)\sum_{j=0}^{k}b_{kj}(\alpha)q_{i}^{2j},
\]
where $a_{kj}(\alpha)$, $b_{kj}(\alpha)$ are polynomials in $\alpha$ with coefficients
not depending on $q_i$ and moreover $a_{kk}(\alpha)=(-1)^k$.
This implies for $k=1,2,\dots$
\begin{align}\label{eq34}
\frac{\varphi_i^{(2k)}(1)}{\varphi_i'(1)}&=\sum_{s=1}^{k}c_{0s}(\alpha)\,
\frac{\varphi_i^{(2s-1)}(1)}{\varphi_i'(1)},
\\
\label{eq35}
\frac{\varphi_i^{(2k+1)}(1)}{\varphi_i'(1)}&=\sum_{s=1}^{k}c_{1s}(\alpha)\,
\frac{\varphi_i^{(2s-1)}(1)}{\varphi_i'(1)}+(-1)^kq_i^{2k},
\end{align}
{where $c_{0s}(\alpha)$ and $c_{1s}(\alpha)$ do not depend on $q_i$.}
Then \eqref{eq35} implies \eqref{eq32} since 
\[
\begin{vmatrix}
\frac{\varphi_1(t)}{\varphi_1'(1)}&\dots&\frac{\varphi_{m+1}(t)}{\varphi_{m+1}'(1)}\\
1&\dots&1\\
\frac{\varphi_1^{(3)}(1)}{\varphi_1'(1)}&\dots&\frac{\varphi_{m+1}^{(3)}(1)}{\varphi_{m+1}'(1)}\\
\hdotsfor[2]{3}\\
\frac{\varphi_1^{(2m-1)}(1)}{\varphi_1'(1)}&\dots&\frac{\varphi_{m+1}^{(2m-1)}(1)}{\varphi_{m+1}'(1)}
\end{vmatrix}=
\begin{vmatrix}
\frac{\varphi_1(t)}{\varphi_1'(1)}&\dots&\frac{\varphi_{m+1}(t)}{\varphi_{m+1}'(1)}\\
1&\dots&1\\
-q_1^2&\dots &-q_{m+1}^2\\
\hdotsfor[2]{3}\\
(-1)^{m-1}q_1^{2m-2}&\dots &(-)^{m-1}q_{m+1}^{2m-2}
\end{vmatrix}.
\]

Further, taking into account \eqref{eq31} and \eqref{eq32}, we derive 
\begin{equation}\label{zv}
p_{\alpha,m}^{(1)}(1)=p_{\alpha,m}^{(3)}(1)=\cdots=p_{\alpha,m}^{(2m-1)}(1)=0.
\end{equation}
Therefore, by \eqref{eq30} and \eqref{eq34}, we obtain for $k=0,\dots,m$ that
\begin{align*}
p_{\alpha,m}^{(2k)}(1)&=-b_{\alpha}^{-1}\sum_{i=1}^{m+1}A_i\,\frac{\varphi_i^{(2k)}(1)}
{\varphi_i'(1)}=-b_{\alpha}^{-1}\sum_{i=1}^{m+1}A_i
\sum_{s=1}^{k}c_{0s}(\alpha)\,\frac{\varphi_i^{(2s-1)}(1)}
{\varphi_i'(1)}\\
&=-b_{\alpha}^{-1}\sum_{s=1}^{k}c_{0s}(\alpha)
\sum_{i=1}^{m+1}A_i\,\frac{\varphi_i^{(2s-1)}(1)}{\varphi_i'(1)}=
\sum_{s=1}^{k}c_{0s}(\alpha)p_{\alpha,m}^{(2s-1)}(1)=0.
\end{align*}
Together with (\ref{zv}), this implies that the zero $t=1$ of the polynomial
$p_{\alpha,m}(t)$ has multiplicity $2m+1$. Then taking into account \eqref{p-g}, the same also holds for $\mathcal{H}_{\alpha}(g_{\alpha,m})(t)$.

Let us show that $p_{\alpha,m}(t)$ does not have zeros on $[0,1)$ and hence
$p_{\alpha,m}(t)>0$ on $[0,1)$. This yields
that $g_{\alpha,m}$ is the positive definite function.

We use the facts that $\{\varphi_i(t)\}_{i=1}^{m+1}$ {for any $m\in
\mathbb{Z}_{+}$} is the Chebyshev system on the interval $(0,1)$ (see
Theorem~\ref{prop-bess} below) and any polynomial
$\sum_{i=1}^{m+1}c_{i}\varphi_{i}(t)$ on $(0,1)$ has at most $m$ zeros,
counting multiplicity.

We now consider the polynomial
\begin{equation}\label{eq37}
p(t,\varepsilon)=
\begin{vmatrix}
\frac{\varphi_1(t)}{\varphi_1'(1)}&\dots&\frac{\varphi_{m+1}(t)}{\varphi_{m+1}'(1)}\\
\frac{\varphi_1(1-\varepsilon)}{(-\varepsilon)\varphi_1'(1)}&\dots&
\frac{\varphi_{m+1}(1-\varepsilon)}{(-\varepsilon)\varphi_{m+1}'(1)}\\
\frac{\varphi_1(1-2\varepsilon)}{(-2\varepsilon)^3\varphi_1'(1)}&\dots&
\frac{\varphi_{m+1}(1-2\varepsilon)}{(-2\varepsilon)^3\varphi_{m+1}'(1)}\\
\hdotsfor[2]{3}\\
\frac{\varphi_1(1-m\varepsilon)}{(-m\varepsilon)^{2m-1}\varphi_1'(1)}&\dots&
\frac{\varphi_{m+1}(1-m\varepsilon)}{(-m\varepsilon)^{2m-1}\varphi_{m+1}'(1)}
\end{vmatrix}.
\end{equation}
{If $m=0$, it is positive on $(0,1)$ and, if $m\ge 1$,
for any $0<\varepsilon<1/m$, it has $m$ zeros at the points
$t_j=1-j\varepsilon$, $j=1,\dots,m$.}
Letting $\varepsilon\to
0+$, we note that
the polynomial $\lim\limits_{\varepsilon\to 0+}p(t,\varepsilon)$ does not have zeros on $(0,1)$.
Let us show that
\begin{equation}\label{zv1}
\lim\limits_{\varepsilon\to 0+}p(t,\varepsilon)=cp_{\alpha,m}(t)
\end{equation}
with some $c\ne 0.$ This implies that
 the polynomial $p_{\alpha,m}(t)$ is positive on $[0,1)$.

To show \eqref{zv1}, by
Taylor's theorem, we have
\[
\frac{\varphi_i(1-j\varepsilon)}{(-j\varepsilon)^{2j-1}\varphi_i'(1)}=
\sum_{s=1}^{2j-2}\frac{\varphi_i^{(s)}(1)}{s!\,(-j\varepsilon)^{2j-1-s}\varphi_i'(1)}+
\frac{\varphi_i^{(2j-1)}(1)+o(1)}{(2j-1)!\,\varphi_i'(1)},\quad \varepsilon\to
0,
\]
for $j=1,\dots,m-1$. Using formulas \eqref{eq34} and \eqref{eq35} and progressively subtracting the row $j$ from the row $j-1$
in the determinant \eqref{eq37}, we arrive at
\[
p(t,\varepsilon)=\frac{1}{\prod_{j=1}^{m-1}(2j-1)!}
\begin{vmatrix}
\frac{\varphi_1(t)}{\varphi_1'(1)}&\dots&\frac{\varphi_{m+1}(t)}{\varphi_{m+1}'(1)}\\
1+o(1)&\dots&1+o(1)\\
\frac{\varphi_1^{(3)}(1)+o(1)}{\varphi_1'(1)}&\dots&
\frac{\varphi_{m+1}^{(3)}(1)+o(1)}{\varphi_{m+1}'(1)}\\
\hdotsfor[2]{3}\\
\frac{\varphi_1^{(2m-1)}(1)+o(1)}{\varphi_1'(1)}&\dots&
\frac{\varphi_{m+1}^{(2m-1)}(1)+o(1)}{\varphi_{m+1}'(1)}
\end{vmatrix}.
\]
Then, taking into account \eqref{eq31} and \eqref{eq32}, we have \eqref{zv1}.

\subsection*{Monotonicity of 
   $p_{\alpha,m}$}
The polynomial $p(t,\varepsilon)$ vanishes at $m+1$ points:
$t_{j}=1-j\varepsilon$, $j=1,\dots,m$, and $t_{m}=1$, thus its derivative
$p'(t,\varepsilon)$ has $m$ zeros on the interval $(1-\varepsilon, 1)$.

In virtue of \eqref{j-diff-rec},
\[
\varphi_{i}'(t)=-\frac{q_{i}^{2}t}{2(\alpha+1)}\,j_{\alpha+1}(q_{i}t),\quad
t\in [0,1].
\]
This and Theorem~\ref{prop-bess} imply that 
$\{\varphi_i'(t)\}_{i=1}^{m+1}$ is the Chebyshev system on $(0,1)$. Therefore,
$p'(t,\varepsilon)$ does not have zeros on $(0,1-\varepsilon]$. Then for
$\varepsilon\to 0+$ we derive that $p_{\alpha,m}'(t)$ does not have zeros on
$(0,1)$. Since $p_{\alpha,m}(0)>0$ and $p_{\alpha,m}(1)=0$, then
$p_{\alpha,m}'(t)<0$ on $(0,1)$. Thus, $p_{\alpha,m}(t)$ is decreasing on the
interval $[0,1]$. This completes the proof of part~(iii).

\subsection*{Uniqueness of the extremizer $f_{\alpha,m}$}
As above, we will use Lemmas \ref{lem-1} and \ref{lem-2}. Let $f(\lambda)$ be
an extremizer and $\lambda((-1)^{m}f)=q_{m+1}$. Consider the functions
\[
F(\lambda)=\omega_{\alpha}(\lambda)f(\lambda),\quad
\Omega(\lambda)=\omega_{\alpha}(\lambda)f_{\alpha,m}(\lambda),
\]
where $f_{\alpha,m}$ is defined in \eqref{def-fam} and $\omega_{\alpha}$ is from Lemma \ref{lem-1}.

Note that all zeros of $\Omega(\lambda)$ are also zeros of $F(\lambda)$.
Indeed, we have $(-1)^{m}f(\lambda)\le 0$ for $\lambda\ge q_{m+1}$ and
$f(q_{m+1})=0$ (otherwise $\lambda((-1)^{m}f)<q_{m+1}$, which is a
contradiction). This and \eqref{eq22} imply that the points $q_{s}$, $s\ge
m+2$,
are double zeros of $f$. By \eqref{eq23}, we also have that $f(q_{s})=0$ for
$s=1,\dots,m$ and therefore the function $f$ has zeros (at~least, of order
one) at the points $q_{s}$, $s=1,\dots,m+1$.

Using asymptotic relations given in Lemma \ref{lem-1}, we derive that $F(\lambda)$ is the entire function of exponential type, integrable on real line and therefore bounded.
Taking into account \eqref{eq6} and Lemma \ref{lem-1}, we get
\[
|\Omega(iy)|\asymp y^{-2m-2}e^{4y},\quad y\to +\infty.
\]
Now using
Lemma \ref{lem-2}, we arrive at
$f(\lambda)=\psi(\lambda)f_{\alpha,m}(\lambda)$, where $\psi(\lambda)$ is an even polynomial of degree at most $2m+2$.
Note that the degree cannot be $2s$, $s=1,\dots,m+1$, since in this case \eqref{eq6} implies that $f\notin
L^1(\mathbb{R}_+,\lambda^{2m}\,d\nu_{\alpha})$. Thus,
$f(\lambda)=cf_{\alpha,m}(\lambda)$, $c>0$.
\end{proof}


\section{Uncertainty principle for bandlimited functions on $\mathbb{R}_{+}$}\label{uncer}


Let as above $\lambda(f)=\sup\{\lambda>0\colon f(\lambda)>0\}$,
$\mathcal{E}_{\alpha}(\mathbb{R}_{+})$ be the class of real-valued even
bandlimited functions $f\in C(\mathbb{R}_+)$, $\tau(f)$ be the type of a bandlimited function $f$, and {$B_{\alpha}$ denote the  operator \eqref{B-def}.}

Following the proof of Theorem \ref{thm-H}, we obtain the following uncertainty principle
{for bandlimited functions on $\mathbb{R}_{+}$}.

\begin{thm}\label{thm-HUG}
Let $\alpha\ge -1/2$ and $m,s\in \mathbb{Z}_+$.

\smallbreak
\textup{(i)} One has
\[
\inf \lambda((-1)^{m}f)\tau(f)=2q_{\alpha+s+1,m+1},
\]
where the infimum is taken over {all nontrivial functions} $f\in
\mathcal{E}_{\alpha}(\mathbb{R}_{+})\cap
L^1(\mathbb{R}_+,\lambda^{2m}\,d\nu_{\alpha})$ such that
\begin{equation}\label{sum-g-}
\begin{cases}
B_{\alpha}^{k}\mathcal{H}_{\alpha}(f)(0)=0,&k=0,\dots,m-1,\\
B_{\alpha}^{l}f(0)=0,&l=0,\dots,s-1,
\end{cases}
\end{equation}
and
\begin{equation}\label{sum-g--}
B_{\alpha}^{m}\mathcal{H}_{\alpha}(f)(0)\ge 0,\quad B_{\alpha}^{s}f(0)\le 0.
\end{equation}
Moreover, the function $\lambda^{2s+2}f_{\alpha+s+1,m}(\lambda)$
is the unique extremizer up to a positive constant,  which additionally satisfies
$B_{\alpha}^{m}\mathcal{H}_{\alpha}(f)(0)=B_{\alpha}^{s}f(0)=0$.

\textup{(ii)} One has
\[
\inf \lambda((-1)^{m}f)\tau(f)=2q_{\alpha+s,m+1},
\]
where the infimum is taken {over all nontrivial functions} $f\in
\mathcal{E}_{\alpha}(\mathbb{R}_{+})\cap
L^1(\mathbb{R}_+,\lambda^{2m+2s}\,d\nu_{\alpha})$ such that
\begin{equation}\label{sum-g---}
B_{\alpha}^{k}\mathcal{H}_{\alpha}(f)(0)=0,\quad k=s,\dots,m+s-1,\quad
B_{\alpha}^{m+s}\mathcal{H}_{\alpha}(f)(0)\ge 0.
\end{equation}
Moreover, the function
$f_{\alpha+s,m}(\lambda)$ is
the unique extremizer up to a positive constant,  which additionally satisfies $B_{\alpha}^{m+s}\mathcal{H}_{\alpha}(f)(0)=0$.
\end{thm}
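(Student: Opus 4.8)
The plan is to follow the proof of Theorem~\ref{thm-H}, since both assertions are the Logan problem with the extra requirement that $f$ (in case (i)) and $\mathcal{H}_\alpha(f)$ (in case (ii)) vanish to a prescribed order at the origin. As there, dilation invariance lets me normalize $\tau(f)=2$. Two elementary reductions organize the argument. First, the identity $\lambda^{2s}\,d\nu_\alpha=(b_\alpha/b_{\alpha+s})\,d\nu_{\alpha+s}$ turns the vanishing of $B_\alpha^k\mathcal{H}_\alpha(f)(0)$ for $k=s,\dots,m+s-1$ into the vanishing of the first $m$ moments of $f$ against $d\nu_{\alpha+s}$; thus case (ii) is exactly the problem solved by Theorem~\ref{thm-H}(i) at level $\alpha+s$, whose answer $2q_{\alpha+s,m+1}$ and extremizer $f_{\alpha+s,m}$ are those claimed, the positivity input for that theorem being provided by the sign condition in~\eqref{sum-g---}. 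Second, in case (i) the conditions $B_\alpha^l f(0)=0$, $l<s$, say that $f$ vanishes to order $2s$ at the origin; together with the target zeros $q_{\alpha+s+1,k}$ and the factor $\lambda^{2s+2}$ in the extremizer this indicates level $\alpha+s+1$. The tool in both cases is the Radau quadrature~\eqref{q-M}, whose nodes are the origin---carrying exactly the data $f^{(2l)}(0)$ that the hypotheses control---and the points $q_{\alpha+r,k}$, which are the zeros appearing in the answer.

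For the lower bound in case (i) I apply~\eqref{q-M} with $r=s+1$ to $F=P\cdot f$, where $P(\lambda)=\prod_{k=1}^m(\lambda^2-q_{\alpha+s+1,k}^2)$; here $F\in\mathcal{B}_\alpha^2(\mathbb{R}_+)$ because $f\in L^1(\mathbb{R}_+,\lambda^{2m}\,d\nu_\alpha)$. The vanishing of the first $m$ moments of $f$ collapses $\int_0^\infty F\,d\nu_\alpha$ to $\int_0^\infty\lambda^{2m}f\,d\nu_\alpha$, whose sign is prescribed by $B_\alpha^m\mathcal{H}_\alpha(f)(0)\ge 0$. On the right-hand side the conditions $B_\alpha^l f(0)=0$ ($l<s$) kill every origin term except $l=s$, which equals $\alpha_{s,s+1}P(0)f^{(2s)}(0)$; since $(-1)^mP(0)>0$ and $f^{(2s)}(0)$ has the sign of $B_\alpha^s f(0)\le 0$, this term is $\le 0$. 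If $\lambda((-1)^m f)<q_{\alpha+s+1,m+1}$, then $(-1)^m f(q_{\alpha+s+1,k})\le 0$ for $k\ge m+1$ makes the remaining terms $\le 0$ as well, and squeezing $0\le(-1)^m\int_0^\infty\lambda^{2m}f\,d\nu_\alpha\le 0$ forces $f^{(2s)}(0)=0$ together with $f(q_{\alpha+s+1,k})=0$ for all $k\ge m+1$. Case (ii) runs in parallel with $r=s$ and $F=\lambda^{2s}Pf$, $P=\prod_{k=1}^m(\lambda^2-q_{\alpha+s,k}^2)$, where the factor $\lambda^{2s}$ annihilates all origin terms and one is reduced to the Gauss step~\eqref{eq22} at level $\alpha+s$.

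One then upgrades these to the full zero set of the extremizer: running the same quadrature with $P$ replaced by $P/(\lambda^2-q_{\alpha+r,k_0}^2)$, $k_0\le m$, gives $f(q_{\alpha+r,k_0})=0$ as in~\eqref{eq23}, while for an extremizer ($\lambda((-1)^m f)=q_{\alpha+r,m+1}$) the sign of $(-1)^m f$ beyond $q_{\alpha+r,m+1}$ makes the zeros with $k\ge m+2$ double, as in~\eqref{eq22}. With $\omega_\alpha$ from Lemma~\ref{lem-1} I set $F=\omega_\alpha f$ and $\Omega=\omega_\alpha\cdot\lambda^{2s+2}f_{\alpha+s+1,m}$ in case (i), resp.\ $\Omega=\omega_\alpha f_{\alpha+s,m}$ in case (ii); then $F\in L^1(\mathbb{R})$ (hence bounded), and by~\eqref{eq6}, $|\Omega(iy)|\asymp y^{-2m-2}e^{4y}$, resp.\ $y^{-2(m+s)-2}e^{4y}$. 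Since every zero of $\Omega$ is a zero of $F$, Lemma~\ref{lem-2} gives $f=\psi\cdot\lambda^{2s+2}f_{\alpha+s+1,m}$, resp.\ $f=\psi f_{\alpha+s,m}$, with $\psi$ an even polynomial, and the integrability weight together with~\eqref{eq6} then forces $\psi$ to be a positive constant. It remains to check that the proposed extremizers are admissible---computing their exponential type, the last sign change, and the orthogonality relations from the Gauss formula as in~\eqref{orth-fam}---and to record $B_\alpha^m\mathcal{H}_\alpha(f)(0)=B_\alpha^s f(0)=0$.

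The step I expect to be the main obstacle is the uniqueness bookkeeping: one must pick the multiplier in Lemma~\ref{lem-2} so that $F=\omega_\alpha f$ is integrable under the correct weight (this is why $\omega_\alpha$, and not $\omega_{\alpha+s+1}$, is used), compute the exact exponent in the growth of $\Omega(iy)$ along the imaginary axis, and use~\eqref{eq6} to exclude every polynomial factor of positive degree. A second delicate point---most visible in case~(ii)---is to secure, for every competitor, the positivity that plays the role of the hypothesis of Theorem~\ref{thm-H}(i) at the shifted level: the origin node of the Radau formula must contribute with the sign forced by $B_\alpha^s f(0)\le 0$ while remaining compatible with $B_\alpha^m\mathcal{H}_\alpha(f)(0)\ge 0$, and checking that the stated sign conditions (together with positive definiteness) make the squeezing inequality close in the required direction is the crux of the argument.
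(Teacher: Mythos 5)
Your proposal is correct and takes essentially the same route as the paper: for part (i), the Radau formula \eqref{q-M} with $r=s+1$, the sign bookkeeping at the origin node, the accumulation of zeros at the points $q_{\alpha+s+1,k}$, and Lemmas~\ref{lem-1} and \ref{lem-2} combined with the integrability weight to identify $\lambda^{2s+2}f_{\alpha+s+1,m}$ as the unique extremizer are exactly the paper's steps. For part (ii), your direct passage to the Gauss argument of Theorem~\ref{thm-H} at level $\alpha+s$ (via $\lambda^{2s}\,d\nu_{\alpha}=(b_{\alpha}/b_{\alpha+s})\,d\nu_{\alpha+s}$) amounts to the same computation as the paper's reduction to part (i) with parameter $s-1$ applied to $g(\lambda)=\lambda^{2s}f(\lambda)$ (for $s\ge 1$; for $s=0$ the paper likewise defers to Theorem~\ref{thm-H}), including the same implicit treatment of the sign of the $(m+s)$-th moment under the level shift that you flag as the crux.
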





\begin{proof}
{Part~(i).} \
Let $f$ be an admissible function. Without loss of generality we can assume that $\tau(f)=2$.
Unlike  the proof of Theorem \ref{thm-H} we will use the
Radau
 quadrature formula
\eqref{q-M}  with $r=s+1$.

First, we show that  $f^{(2l)}(0)=0$ for $0\le l\le s-1$ and $f^{(2s)}(0)\le
0$. Indeed, we have $B_{\alpha}\lambda^{2j}=2j(2\alpha+2j)\lambda^{2j-2}$,
and therefore for $j,l\in \mathbb{Z}_{+}$, by induction, we obtain for the $l$-th power of $B_{\alpha}$ that
$B_{\alpha}^{l}\lambda^{2j}=c_{\alpha,j,l}\lambda^{2j-2l}$, where
$c_{\alpha,j,l}>0$ for $j\ge l$ and $c_{\alpha,j,l}=0$ otherwise. This and  Taylor's expansion of  $f$ imply
\[
B_{\alpha}^{l}f(0)=\frac{c_{\alpha,l,l}}{(2l)!}\,f^{(2l)}(0).
\]

Second, let $\lambda((-1)^{m}f)<q_{m+1}'$,   where for simplicity we put
$q_{k}'=q_{\alpha+s+1,k}$, $k\ge 1$. Recall that $q_{\alpha,k}$ are zeros of
the Bessel function $j_{\alpha}(\lambda)$. Applying 
\eqref{q-M} to
$g(\lambda)=(-1)^{m}\prod_{k=1}^{m}(\lambda^2-{q_{k}'}^2) f(\lambda)$ (note that
  $g\in\mathcal{B}_{\alpha}^{2}(\mathbb{R}_{+})$), we derive
\[
\int_{0}^{\infty}g(\lambda)\,d\nu_{\alpha}(\lambda)=
\sum_{l=0}^{s}\alpha_{l,s+1}g^{(2l)}(0)+
\sum_{k=1}^{\infty}\gamma_{k,s+1}g(q_{k}').
\]
On the other hand, we have 
\[
\int_{0}^{\infty}g(\lambda)\,d\nu_{\alpha}(\lambda)=
(-1)^{m}\int_{0}^{\infty}\lambda^{2m}f(\lambda)\,d\nu_{\alpha}(\lambda)=
B_{\alpha}^{m}\mathcal{H}_{\alpha}(f)(0)\ge 0
\]
and
\[
g^{(2j)}(0)=0,\quad j=0,\dots,s-1,\quad
g^{(2s)}(0)=f^{(2s)}(0)\prod_{k=1}^{m}q_{k}'^2\le
0.
\]
Therefore,
\[
0\le \alpha_{s,s+1}g^{(2s)}(0)+
\sum_{k=m+1}^{\infty}\gamma_{k,s+1}g(q_{k}')\le 0,
\]
where we have used that $\gamma_{k,s+1}>0$
 and the fact that $g(\lambda)\le 0$ for $\lambda\ge
\lambda((-1)^{m}f)$. Thus, $f$ has double zeros at the points $q_{k}'$, $k\ge m+1$,
and {the zero of order $2s+2$ at the origin.}

Further, applying formula \eqref{q-M} for $j=1,\dots,m$ to the functions
$\prod_{\substack{k=1\\ k\ne j}}^{m}({\lambda^2-q_{k}'^2})f(\lambda)$, we
conclude that the function $f$ has at least simple zeros at the points $q_{j}$,
$1\le j\le m$.
Then as in the proof of Theorem \ref{thm-H}, using Lemmas \ref{lem-1}, \ref{lem-2} and the fact that
$\lambda^{2s+2}j_{\alpha+s+1}^{2}(\lambda)\notin L^{1}(\mathbb{R}_+,d\nu_{\alpha})$, {we derive that}
\[
f(\lambda)=\frac{\lambda^{2s+2}j_{\alpha+s+1}^{2}(\lambda)\sum_{k=0}^{m}c_k\lambda^{2k}}
{{\prod_{k=1}^{m}}(1-\lambda^{2}/q_{k}'^2)}\notin
L^1(\mathbb{R}_+,\lambda^{2m}\,d\nu_{\alpha}).
\]
Hence,
following  arguments similar to those used  to show (\ref{H-2}),
  we obtain that
 $\lambda((-1)^{m}f)\ge q_{m+1}'$. In fact, we have that $\lambda((-1)^{m}f)=q_{m+1}'$ for
\begin{equation}\label{f-a-s-1}
f(\lambda)=\frac{\lambda^{2s+2}j_{\alpha+s+1}^{2}(\lambda)}
{ {\prod_{k=1}^{m+1}}(1-\lambda^{2}/q_{k}'^2)}\in
L^1(\mathbb{R}_+,\lambda^{2m}\,d\nu_{\alpha}).
\end{equation}
Moreover, $f$ is a unique extremizer up to a positive constant (similarly to  the proof of the uniqueness of $f_{\alpha,m}$
 in Theorem \ref{thm-H}).

 Using \eqref{q-M} and $f^{(2s)}(0)=0$, we also have
 $B_{\alpha}^{m}\mathcal{H}_{\alpha}(f)(0)=B_{\alpha}^{s}f(0)=0$.

\smallbreak  Part~(ii). \
{The case $s=0$ follows from 
 Theorem
\ref{thm-H} since 
 to prove estimate \eqref{H-2}, we only used  condition \eqref{H-1}.}

Let $s\ge 1$.
We observe
that for any admissible  function $f$, that is, satisfying
condition (\ref{sum-g---}), the function $g(\lambda)=\lambda^{2s}f(\lambda)$ satisfies  conditions 
 (\ref{sum-g-}) and (\ref{sum-g--}) with the
parameter $s-1$ in place of $s$. 
At the same time, we have
$\lambda((-1)^{m}f)\tau(f)=\lambda((-1)^{m}g)\tau(g)$.
Hence, using  the fact that $c \lambda^{2s}f_{\alpha+s,m}(\lambda)$ is the unique extremizer in part~(i), we conclude that
$cf_{\alpha+s,m}(\lambda)$ is the unique extremizer  in  problem (ii).

\end{proof}

 \section{Number of zeros of positive definite function on $\mathbb{R}_+$
 }
 \label{sec-HU3}
It was proved in \cite{Lo83c}  that
if a function  from the class \eqref{eq1} has
$n$ zeros
on the interval $[0,L]$, then $L\ge \frac{\pi}{2}\,n$.
Moreover,
\[
F_{n}
(x)=\Bigl(\cos \frac{x}{n}\Bigr)^n
\]
is the unique extremal function.
 Note that the functions $F_{n}(\pi n(x-\frac{1}{2}))$ for $n=1$ and $3$
 coincide, up to constants, with the cosine Fourier transform of $f_0$ and $f_1$ (see Introduction)
on $[0,1]$.

In this section we study a similar problem for the Hankel transform $\mathcal{H}_{\alpha}$
with $\alpha\ge -1/2$. We will use the approach which was developed in Section~\ref{sec-LH}.
The key argument in the proof is based on
 the properties
of the polynomial $p_{\alpha,m}(t)$ defined in
\eqref{eq30}.

 Let $N_{I}(f)$ be the number of zeros of $f$ on $I$, counting multiplicity.
We will say that
 $f\in\mathcal{E}^+_\alpha(\mathbb{R}_{+})$~if 
$f(\lambda)=\int_{0}^{1}j_{\alpha}(\lambda t)\,d\sigma(t)$
with a nonnegative bounded
Stieltjes measure
 $d\sigma\ne 0$.

\begin{thm}\label{thm-HN}
{Let $\alpha\ge-1/2$, $n\in \mathbb{N}$, and
}\[
L(f,n)=\inf{}\{L>0\colon N_{[0,L]}(f)\ge n\}.
\]
Then
\[
\inf_{f\in\mathcal{E}^+_\alpha(\mathbb{R}_{+})} L(f,n)\le \theta_{\alpha,n}=
\begin{cases}
q_{\alpha,m+1},& n=2m+1,\\ q_{\alpha+1,m+1},& n=2m+2.
\end{cases}
\]
{Moreover, there exists a} 
function
$F_{\alpha,n}\in\mathcal{E}^+_\alpha(\mathbb{R}_{+})$ such that
$L(F_{\alpha,n},n)= \theta_{\alpha,n}$.


\end{thm}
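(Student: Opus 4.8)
The plan is to prove the statement constructively. Since only the upper bound $\inf_f L(f,n)\le\theta_{\alpha,n}$ together with the existence of a minimizing $F_{\alpha,n}$ is asserted (and not a matching lower bound), it suffices to exhibit, for each $n$, a single function $F_{\alpha,n}\in\mathcal{E}^+_\alpha(\mathbb{R}_+)$ which is positive on $[0,\theta_{\alpha,n})$ and whose $n$-th zero, counted with multiplicity from the origin, sits exactly at $\theta_{\alpha,n}$. The two parities are handled separately: the odd case is an immediate by-product of Theorem~\ref{thm-H}(iii), while the even case requires a Neumann analog of the construction in Section~\ref{sec-LH}.

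For $n=2m+1$ I would take $F_{\alpha,2m+1}(\lambda)=p_{\alpha,m}(\lambda/q_{\alpha,m+1})$, where $p_{\alpha,m}$ is the polynomial in the eigenfunctions $\varphi_i(t)=j_\alpha(q_{\alpha,i}t)$ of \eqref{eq30}. By \eqref{eq30} one has $p_{\alpha,m}(t)=\sum_{i=1}^{m+1}B_i\,j_\alpha(q_{\alpha,i}t)$ with $B_i>0$, so after the dilation $t\mapsto\lambda/q_{\alpha,m+1}$ all frequencies $q_{\alpha,i}/q_{\alpha,m+1}$ lie in $(0,1]$ and $F_{\alpha,2m+1}$ is the Hankel transform of the nonnegative discrete measure $\sum_i B_i\,\delta_{q_{\alpha,i}/q_{\alpha,m+1}}$; hence $F_{\alpha,2m+1}\in\mathcal{E}^+_\alpha(\mathbb{R}_+)$. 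By Theorem~\ref{thm-H}(iii), $p_{\alpha,m}$ is positive on $[0,1)$ and has a zero of multiplicity $2m+1$ at $t=1$, so $F_{\alpha,2m+1}$ is positive on $[0,q_{\alpha,m+1})$ with a single zero of multiplicity $2m+1$ at $q_{\alpha,m+1}$. Thus $N_{[0,L]}(F_{\alpha,2m+1})=0$ for $L<q_{\alpha,m+1}$ and equals $2m+1$ at $L=q_{\alpha,m+1}$, giving $L(F_{\alpha,2m+1},2m+1)=q_{\alpha,m+1}=\theta_{\alpha,2m+1}$.

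For $n=2m+2$ the correct building blocks are the Neumann eigenfunctions $\psi_j(t)=j_\alpha(q_{\alpha+1,j}t)$, $j=0,\dots,m+1$, where $q_{\alpha+1,0}:=0$ so that $\psi_0\equiv1$; by \eqref{j-diff-rec} these satisfy $\psi_j'(1)=-\tfrac{q_{\alpha+1,j}^2}{2(\alpha+1)}\,j_{\alpha+1}(q_{\alpha+1,j})=0$. Mimicking the derivation of \eqref{eq30}–\eqref{p-g}, now evaluating the integral \eqref{j-int} at $a=q_{\alpha+1,j}$ (where $j_{\alpha+1}(q_{\alpha+1,j})=0$, which produces $\mathcal{H}_\alpha(\psi_j\chi_{[0,1]})(\lambda)\propto \lambda^2 j_{\alpha+1}(\lambda)/(\lambda^2-q_{\alpha+1,j}^2)$), I would produce a combination $\tilde p_{\alpha,m}(t)=\sum_{j=0}^{m+1}D_j\,\psi_j(t)$ and set $F_{\alpha,2m+2}(\lambda)=\tilde p_{\alpha,m}(\lambda/q_{\alpha+1,m+1})$. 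Writing $E_j=D_j\psi_j(1)$ and using that, because of \eqref{eq27} and $\psi_j'(1)=0$, every derivative has the form $\psi_j^{(k)}(1)=\psi_j(1)\,P_k(q_{\alpha+1,j}^2)$ with $\deg P_k\le\lfloor k/2\rfloor$ and $P_{2k}$ of exact degree $k$, the $2m+2$ vanishing conditions $\tilde p_{\alpha,m}^{(k)}(1)=0$, $k=0,\dots,2m+1$, collapse to the $m+1$ moment conditions $\sum_{j=0}^{m+1}E_j(q_{\alpha+1,j}^2)^i=0$, $i=0,\dots,m$. This Vandermonde system in the distinct nodes $q_{\alpha+1,j}^2$ has a one-dimensional solution space, which forces $\tilde p_{\alpha,m}$ to have a zero of multiplicity exactly $2m+2$ at $t=1$.

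Two points remain, and these constitute the main work. First, the sign bookkeeping: since $\mathrm{sign}\,\psi_j(1)=\mathrm{sign}\,j_\alpha(q_{\alpha+1,j})=(-1)^j$ (by the interlacing $q_{\alpha,j}<q_{\alpha+1,j}<q_{\alpha,j+1}$) and the kernel of the Vandermonde system is spanned by cofactors with $\mathrm{sign}\,E_j=(-1)^j$ (as in \eqref{eq26}), one gets $D_j>0$, so $\tilde p_{\alpha,m}$ is a genuine nonnegative-measure element of $\mathcal{E}^+_\alpha(\mathbb{R}_+)$. Second, and hardest, one must show $\tilde p_{\alpha,m}>0$ on $[0,1)$. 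For this I would repeat the perturbation argument of \eqref{eq37}: since $\{\psi_0,\dots,\psi_{m+1}\}$ is a Chebyshev system on $(0,1)$ (Theorem~\ref{prop-bess}), form a determinant $\tilde p(t,\varepsilon)$ whose $m+1$ interior zeros lie at $t=1-j\varepsilon$, $j=1,\dots,m+1$; the Chebyshev bound guarantees these are its only zeros in $(0,1)$, and letting $\varepsilon\to0+$ with the rows rescaled as in \eqref{eq37} gives $\tilde p(t,\varepsilon)\to c\,\tilde p_{\alpha,m}(t)$ with $c\ne0$, so that the limit has no zero in $(0,1)$; combined with $\tilde p_{\alpha,m}(0)>0$ this yields positivity on $[0,1)$. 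Consequently $F_{\alpha,2m+2}$ has a single zero of multiplicity $2m+2$ at $q_{\alpha+1,m+1}$ and none before, whence $L(F_{\alpha,2m+2},2m+2)=q_{\alpha+1,m+1}=\theta_{\alpha,2m+2}$. The principal obstacle is exactly this even case: both the sign analysis yielding $D_j>0$ and the adaptation of the Chebyshev/perturbation scheme to the Neumann endpoint condition $\psi_j'(1)=0$ (where, unlike the Dirichlet situation of Section~\ref{sec-LH}, the odd-order derivatives at $t=1$ no longer vanish automatically) demand the careful recomputation sketched above.
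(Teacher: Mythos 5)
Your odd case is exactly the paper's proof: rescale the Dirichlet polynomial $p_{\alpha,m}$ from Theorem~\ref{thm-H}~(iii), and its membership in $\mathcal{E}^+_\alpha(\mathbb{R}_{+})$, positivity on $[0,q_{\alpha,m+1})$, and multiplicity-$(2m+1)$ zero at $q_{\alpha,m+1}$ give $L(F_{\alpha,2m+1},2m+1)=q_{\alpha,m+1}$. For the even case you take a genuinely different route. You rebuild the extremizer from scratch as a combination $\sum_{j=0}^{m+1}D_j\,j_\alpha(q_{\alpha+1,j}t)$ of Neumann eigenfunctions (with $q_{\alpha+1,0}=0$), determining the coefficients from a Vandermonde moment system, getting $D_j>0$ from two interlacing sign computations, and proving positivity on $[0,1)$ by adapting the perturbation determinant \eqref{eq37} to the Neumann endpoint. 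The individual claims check out: the relation $\psi_j^{(k)}(1)=\psi_j(1)P_k(q_{\alpha+1,j}^2)$ with the stated degree structure follows from the differentiated ODE and $\psi_j'(1)=0$; the kernel of the moment system alternates in sign; and $\mathrm{sign}\,j_\alpha(q_{\alpha+1,j})=(-1)^j$ by interlacing, so $D_j>0$. Since the space of such combinations vanishing to order $2m+2$ at $t=1$ is one-dimensional, your function coincides (up to a positive constant) with the paper's extremizer. However, the paper obtains all of this with a short reduction your plan misses: it takes the already-established Dirichlet solution at the shifted parameter, $p_{\alpha+1,m}$, and integrates, $P(t)=\int_t^1 s\,p_{\alpha+1,m}(s)\,ds$, which by \eqref{j-diff-rec} equals $B_0''+\sum_{i=1}^{m+1}B_i''\,j_\alpha(q_{\alpha+1,i}t)$ with $B_i''>0$ and $B_0''=b_{\alpha+1}^{-1}>0$ (the latter from the partial-fraction identity \eqref{B'-A'} at $\lambda=0$). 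Positivity and monotonicity of $P$ on $[0,1)$ and the jump of the endpoint multiplicity from $2m+1$ to $2m+2$ are then inherited directly from Theorem~\ref{thm-H}~(iii) applied to $p_{\alpha+1,m}$. This bypasses entirely the two steps you rightly single out as the main work: the sign bookkeeping and, above all, a second Chebyshev/perturbation argument, which at a Neumann endpoint is especially delicate because every admissible combination automatically has $P'(1)=0$, so endpoint multiplicities need separate care (your sketch of that step, and of the exactness of the multiplicity, is plausible but not carried out). In short, your plan is workable but re-proves the hardest part of Section~\ref{sec-LH} in a second boundary-value setting, whereas the paper reduces the even case to the odd case at $\alpha+1$ by a single integration.
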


\begin{rem}\label{newrem}
(1) For $\alpha=-1/2$, we have $q_{-1/2,m+1}=\frac{\pi}{2}\,(2m+1)$, $q_{1/2,m+1}=\pi (m+1)$,
{and, therefore,} $\theta_{-1/2,n}=\frac{\pi}{2}\,n$. 
 Hence,
we arrive at the  mentioned above result  \cite{Lo83c} 
\[
\inf_{f\in\mathcal{E}^+_{-1/2}(\mathbb{R}_{+})} L(f,n)=\frac{\pi}{2}\,n,
\]
where the extremal function $F_{-1/2,n}(\lambda)=(\cos \frac{\lambda}{n})^n$
has on $[0,\frac{\pi}{2}\,n]$ the unique zero $\lambda=\frac{\pi}{2}\,n$ of
multiplicity $n$.

(2) {We will show that the function} $F_{\alpha,n}(\lambda)$ has
 on $[0,\theta_{\alpha,n}]$  the
unique zero  $\lambda=\theta_{\alpha,n}$ of
multiplicity~$n$. Moreover, one has
 for $\lambda\in [0,\theta_{\alpha,n}]$
\[
F_{\alpha,n}(\lambda)=
\begin{cases}
p_{\alpha,m}(\lambda/q_{\alpha,m+1}),&n=2m+1,\\
\int_{\lambda/q_{\alpha+1,m+1}}^{1}sp_{\alpha+1,m}(s)\,ds,&n=2m+2.
\end{cases}
\]
\end{rem}


\begin{proof}
Let $n=2m+1$. Consider the polynomial (see \eqref{eq30})
\[
p_{\alpha,m}(t)=\sum_{i=1}^{m+1}B_ij_{\alpha}(q_{i}t),\quad t\in \mathbb{R}_{+},
\]
where $q_{i}=q_{\alpha,i}$. It has positive coefficients $B_i$ and the unique
zero $t=1$ of multiplicity $2m+1$ on the interval $[0,1]$ {(see Theorem \ref{thm-H} (iii))}. This and \eqref{p-g}
imply that the function
\[
F_{\alpha,n}(\lambda)=
\sum_{i=1}^{m+1}B_ij_{\alpha}\Bigl(\frac{q_{i}}{q_{m+1}}\,\lambda\Bigr),\quad
\lambda\in \mathbb{R}_{+},
\]
is the positive definite entire function of exponential type $1$  such
that $\lambda=q_{m+1}$ is a unique zero of multiplicity $2m+1$ on the interval
$[0,q_{m+1}]$. Therefore,
$L(F_{\alpha,n},2m+1)\le q_{m+1}$.

\smallbreak
Assume that $n=2m+2$. Consider the polynomial of type \eqref{eq30},
with respect to the parameter $\alpha+1$:
\[
p_{\alpha+1,m}(t)=\sum_{i=1}^{m+1}B_i'j_{\alpha+1}(q_{i}'t),\quad t\in
\mathbb{R}_{+},
\]
where $q_{i}'=q_{\alpha+1,i}$. As above, $B_i'>0$ and
\begin{equation}\label{B'-A'}
B_{i}'=-b_{\alpha+1}^{-1}\,\frac{A_i'}{\frac{d}{dt}\,j_{\alpha+1}(q_{i}'t)|_{t=1}},\quad
\sum_{i=1}^{m+1}\frac{A_i'}{q_{i}'^2-\lambda^2}=
\frac{1}{\prod_{i=1}^{m+1}(1-\lambda^2/q_{i}'^2)}.
\end{equation}
Set
\[
P(t)=\int_{t}^{1}sp_{\alpha+1,m}(s)\,ds=2(\alpha+1)\sum_{i=1}^{m+1}
\frac{B_{i}'}{q_{i}'^{2}}\,(j_{\alpha}(q_{i}'t)-j_{\alpha}(q_{i}')),
\]
where we have used \eqref{j-diff-rec}.

In virtue of \eqref{j1-diff},
$\frac{d}{dt}\,j_{\alpha+1}(q_{i}'t)\Bigr|_{t=1}=2(\alpha+1)j_{\alpha}(q_{i}')$
and therefore the polynomial $p_{\alpha+1,m}$ is positive and decreasing on $[0,1)$ and it
 has zero of multiplicity $2m+1$ at $t=1$. Then it is clear that the
polynomial $P(t)$ is positive and decreasing on
 $[0,1)$ and
 it has zero of multiplicity $2m+2$ at $t=1$.

Moreover, $P(t)$ can be represented as follows
\[
P(t)=B_{0}''+\sum_{i=1}^{m+1}B_{i}''j_{\alpha}(q_{i}'t),
\]
where $B_{i}''>0$ for $i\ge 1$ and, by \eqref{B'-A'},
\[
B_{0}''=-2(\alpha+1)\sum_{i=1}^{m+1}\frac{B_{i}'}{q_{i}'^{2}}\,j_{\alpha}(q_{i}')=
b_{\alpha+1}^{-1}\sum_{i=1}^{m+1}\frac{A_i'}{q_{i}'^{2}}=b_{\alpha+1}^{-1}>0.
\]

We finish the proof defining
\[
F_{\alpha,n}(\lambda)=
B_{0}''+\sum_{i=1}^{m+1}B_{i}''j_{\alpha}\Bigl(\frac{q_{i}'}{q_{m+1}'}\,\lambda\Bigr),\quad
\lambda\in \mathbb{R}_{+},
\]
which is a positive definite entire function of exponential type $1$, having
the unique zero $\lambda=q_{m+1}'$ of multiplicity $2m+2$ on $[0,q_{m+1}']$.
Therefore, $L(F_{\alpha,n},2m+2)\le q_{m+1}'$.
\end{proof}


\section{Generalized Logan problem for Dunkl and Fourier transforms }\label{sec-D}


In this section we solve the Logan problem for the Dunkl transform. We remark that in this case we will use the function
 $f_{\alpha,m}$ defined by \eqref{def-fam} for any
$\alpha\ge -1/2$ unlike the case of Fourier transform where we deal with only $\alpha=d/2-1$.


Basic facts on Dunkl harmonic analysis can be found in, e.g.,
\cite{Ro03}.
 Let a finite subset $R\subset \mathbb{R}^{d}\setminus\{0\}$ be a root system,
$G(R)\subset O(d)$ be a finite reflection group,
generated by reflections $\{\sigma_{a}\colon a\in R\}$, where $\sigma_{a}$ is a
reflection with respect to hyperplane $\<a,x\>=0$, and $\kappa\colon R\to
\mathbb{R}_{+}$ be a $G$-invariant multiplicity function.
{The Dunkl weight} is given by
\[
v_{\kappa}(x)=\prod_{a\in R_{+}}|\<a,x\>|^{2\kappa(a)},
\]
where $R_{+}$ positive subsystem of $R$.

Let $E_{\kappa}(x,y)$ be the symmetric Dunkl kernel associated with $G$ and $\kappa$
and $e_{\kappa}(x,y)=E_{\kappa}(x,iy)$ be the generalized exponential function.
{It is known that}
\[
e_\kappa(x,y)=\int_{\mathbb{R}^d}e^{i\<\xi,y\>}\,d\mu_x^\kappa(\xi),
\]
where $\mu_x^\kappa$ is a probability Borel measure
{supported on}
the convex hull of the $G$-orbit of $x\in \mathbb{R}^d$. Moreover, {one has
$(-\Delta_\kappa)^re_\kappa({\,\cdot\,},y)=|y|^{2r}e_\kappa({\,\cdot\,},y)$,
$r\in \mathbb{Z}_{+}$, where $\Delta_{\kappa}$ is the Dunkl Laplacian}.

Denote
\[
\alpha_{\kappa}=\frac{d}{2}-1+\sum_{a\in R_{+}}\kappa(a).
\]

We will need the following Fischer-type decomposition for the
Dunkl Laplacian: any even polynomial  $P(x)$, $x\in \mathbb{R}^d$, of degree at most  $2r$ can be represented by
\[
P(x)=\sum_{m=0}^{r}\sum_{j=0}^{m}|x|^{2m-2j}H_{m,2j}(x),
\]
where  $H_{m,2j}$ are even $\kappa$-harmonic homogeneous polynomials of degree $2j$, i.e., $\Delta_{\kappa}H_{m,2j}=0$
(see \cite[Sec.~5.1]{DX01}). Such polynomials satisfy
$$\Delta_{\kappa}|x|^{2i}H_{m,2j}(x)=2i(2i+4j+2\alpha_{\kappa})|x|^{2i-2}H_{m,2j}(x)$$
(see \cite[Lemma~5.1.9]{DX01}), which implies
\begin{equation}\label{D-diff}
\Delta_{\kappa}^{l}|x|^{2i}H_{m,2j}(x)=c_{ijl}|x|^{2i-2l}H_{m,2j}(x),\quad
c_{ijl}=0\quad \text{for}\quad i<l.
\end{equation}

The Dunkl transform is defined as follows
\[
\mathcal{F}_{\kappa}(f)(y)=c_{\kappa}\int_{\mathbb{R}^{d}}f(x)\overline{e_{\kappa}(x,y)}
v_{\kappa}(x)\,dx,\quad y\in \mathbb{R}^{d},
\]
where $c_{\kappa}^{-1}=\int_{\mathbb{R}^{d}}e^{-|x|^{2}/2}v_{\kappa}(x)\,dx $
is the Macdonald--Mehta--Selberg integral. It is a unitary operator in
$L^2(\mathbb{R}^d,d\mu_\kappa)$ such that
$\mathcal{F}_{\kappa}^{-1}(f)(x)=\mathcal{F}_{\kappa}(f)(-x)$.

In the non-weighted case ($\kappa=0$) we have $d\mu_{0}(x)=(2\pi)^{-d/2}\,dx$,
$e_{0}(x,y)=e^{i\<x,y\>}$, $\Delta_{0}=\Delta$, and $\mathcal{F}_{0}$ is the
Fourier transform.

Let $f\in C(\mathbb{R}^{d})$ be such that
\begin{equation}\label{repres}
f(x)=\int_{\mathbb{R}^{d}}e_{\kappa}(x,y)\,d\mu(y)
\end{equation}
with  a finite nonnegative Borel measure $\mu$.
We call such functions positive definite with respect to the Dunkl transform, if  $\mu$ is
nonnegative. For $\kappa=0$, by Bochner's theorem, we arrive at the usual concept of positive definiteness.




{
Denote  by  $\mathcal{E}_{\kappa}(\mathbb{R}^{d})$ the class of all even
{real-valued continuous bandlimited} functions $f$
of form
 \eqref{repres}
 {with the compactly supported measure $\mu$}. As usual,
 $\tau(f)$ is the  exponential
(spherical) type of $f$ if $\mathrm{supp}\,\mu\subset B_{\tau(f)}^{d}$ (cf.\ \cite{Je06}).
Recall that 
$\lambda(f)=\sup\{|x|>0\colon f(x)>0\}$.

We are now in a position to formulate the complete solution of the generalized Logan problem as well as
the uncertainty principle relations for {the Dunkl transform}.

\begin{thm}\label{thm-D}
Let $d\ge \mathbb{N}$ and $m,s\in \mathbb{Z}_+$.

\smallbreak \textup{(i)} One has
\[
\inf \lambda((-1)^{m}f)\tau(f)=2q_{\alpha_{\kappa},m+1},
\]
where the infimum is taken over all nontrivial functions $f\in
\mathcal{E}_{\kappa}(\mathbb{R}^{d})$ such that the measure $\mu$ in
\eqref{repres} is nonnegative in some {neighborhood of the origin}
  and,  if $m\ge 1$,
  $f\in L^1(\mathbb{R}^d,|x|^{2m-2}v_{\kappa}(x)\,dx)$
  {and the condition}
\[
\Delta_{\kappa}^{j}\mathcal{F}_{\kappa}(f)(0)=0,\quad
j=0,\dots,m-1,
\]
is fulfilled.
Moreover, the
positive definite  radial function $f_{\alpha_{\kappa},m}(|x|)$
is the unique extremizer
 up to a positive constant. 
{This function satisfies
$f\in L^1(\mathbb{R}^d,|x|^{2m}v_{\kappa}(x)\,dx)$ and
$\Delta_{\kappa}^{m}\mathcal{F}_{\kappa}(f)(0)=0$.
}

\smallbreak
\textup{(ii)} One has
\[
\inf \lambda((-1)^{m}f)\tau(f)=2q_{\alpha_{\kappa}+s+1,m+1},
\]
where the infimum is taken over all nontrivial functions $f\in
\mathcal{E}_{\kappa}(\mathbb{R}^{d})\cap
L^1(\mathbb{R}^d,|x|^{2m}v_{\kappa}(x)\,dx)$ such that
\begin{equation}\label{vsp-2}
\begin{cases}
\Delta_{\kappa}^{j}\mathcal{F}_{\kappa}(f)(0)=0,&j=0,\dots,m-1,\\
\Delta_{\kappa}^{l}f(0)=0,&l=0,\dots,s-1,
\end{cases}
\end{equation}
and
\[
\Delta_{\kappa}^{m}\mathcal{F}_{\kappa}(f)(0)\ge 0,\quad
\Delta_{\kappa}^{s}f(0)\le 0.
\]
{Moreover, each extremizer has the form $r(x)f_{\alpha_{\kappa}+s+1,m}(|x|)$
and satisfies the condition
$\Delta_{\kappa}^{m}\mathcal{F}_{\kappa}(f)=\Delta_{\kappa}^{s}f(0)=0$. Here
\[
r(x)=\sum_{j=0}^{s+1}|x|^{2s+2-2j}h_{2j}(x)\ge 0,\quad |x|\ge q_{\alpha_{\kappa}+s,m+1},
\]
where $h_{2j}(x)$ are even $\kappa$-harmonic polynomials of order at most $2j$
{ such that $h_{0}>0$}, $h_{2j}(0)=0$, $j=1,\dots,s+1$.}

\smallbreak
\textup{(iii)} One has
\[
\inf \lambda((-1)^{m}f)\tau(f)=2q_{\alpha_{\kappa}+s,m+1},
\]
where the infimum is taken over all nontrivial functions $f\in
\mathcal{E}_{\kappa}(\mathbb{R}^{d})\cap L^1(\mathbb{R}^d,|x|^{2m+2s}v_{\kappa}(x)\,dx)$ such that
\[
\Delta_{\kappa}^{j}\mathcal{F}_{\kappa}(f)(0)=0,\quad j=s,\dots,m+s-1,\quad
\Delta_{\kappa}^{m+s}\mathcal{F}_{\kappa}(f)(0)\ge 0.
\]
The function $f_{\alpha_{\kappa}+s,m}(|x|)$ is the unique extremizer up to a positive constant. 
{Moreover, this function satisfies} $\Delta_{\kappa}^{m+s}\mathcal{F}_{\kappa}(f)(0)=0$.
\end{thm}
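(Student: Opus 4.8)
The plan is to reduce the $d$-dimensional Dunkl problem to the one-dimensional Hankel problems already settled in Theorems~\ref{thm-H} and~\ref{thm-HUG}. The bridge is the $v_\kappa$-weighted spherical average
\[
\widetilde f(r)=\frac{1}{\omega_\kappa}\int_{\mathbb{S}^{d-1}}f(r\xi)\,v_\kappa(\xi)\,d\sigma(\xi),\qquad \omega_\kappa=\int_{\mathbb{S}^{d-1}}v_\kappa(\xi)\,d\sigma(\xi),
\]
together with the classical Dunkl identity $\omega_\kappa^{-1}\int_{\mathbb{S}^{d-1}}e_\kappa(r\xi,y)\,v_\kappa(\xi)\,d\sigma(\xi)=j_{\alpha_\kappa}(r|y|)$. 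Averaging~\eqref{repres} shows $\widetilde f\in\mathcal{E}_{\alpha_\kappa}(\mathbb{R}_+)$ with $\tau(\widetilde f)\le\tau(f)$, and since $\widetilde f$ is exactly the radial part of $f$, formula~\eqref{D-diff} gives $\Delta_\kappa^{l}f(0)=B_{\alpha_\kappa}^{l}\widetilde f(0)$, while $\Delta_\kappa^{j}\mathcal{F}_\kappa(f)(0)=(-1)^{j}c_\kappa\int_{\mathbb{R}^d}|x|^{2j}f(x)v_\kappa(x)\,dx$ is a positive multiple of $B_{\alpha_\kappa}^{j}\mathcal{H}_{\alpha_\kappa}(\widetilde f)(0)$. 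Thus every constraint in~\eqref{orth} and~\eqref{sum-g-}--\eqref{sum-g---} is inherited by $\widetilde f$ with the same sign. As $(-1)^mf\le0$ outside $B^d_\lambda$ forces $(-1)^m\widetilde f\le0$ on $[\lambda,\infty)$, we get $\lambda((-1)^m\widetilde f)\le\lambda((-1)^mf)$; hence, once $\widetilde f\not\equiv0$, Theorem~\ref{thm-H} (part~(i), with $\alpha=\alpha_\kappa$), Theorem~\ref{thm-HUG}(i) (part~(ii)) and Theorem~\ref{thm-HUG}(ii) (part~(iii)) deliver the three lower bounds. Nontriviality of $\widetilde f$ follows in~(i) from the local positive definiteness of $f$ and in~(ii),(iii) from extremality.

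For the matching upper bounds I lift radial Hankel extremizers. In~(i) the function $f_{\alpha_\kappa,m}(|x|)$ is admissible and positive definite by Remark~\ref{rem-theta}; in~(iii) the function $f_{\alpha_\kappa+s,m}(|x|)$ works directly. In~(ii) I use the larger family $r(x)f_{\alpha_\kappa+s+1,m}(|x|)$ with $r$ even homogeneous of degree $2s+2$. Writing its Fischer decomposition $r(x)=\sum_{j=0}^{s+1}|x|^{2s+2-2j}h_{2j}(x)$ (cf.\ \eqref{D-diff}), the $\kappa$-harmonic summands of positive degree integrate to zero over spheres, so all moment conditions collapse to those of the radial extremizer $\lambda^{2s+2}f_{\alpha_\kappa+s+1,m}$ supplied by Theorem~\ref{thm-HUG}(i); the origin conditions hold because $\deg r=2s+2$; and $r\ge0$ for $|x|\ge q_{\alpha_\kappa+s,m+1}$ together with the sign of $f_{\alpha_\kappa+s+1,m}$ yields $\lambda((-1)^mf)=q_{\alpha_\kappa+s+1,m+1}$.

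To characterize all extremizers, note first that for an extremal $f$ the average $\widetilde f$ coincides with the unique Hankel extremizer, so the radial part of $f$ is pinned. Decomposing $f=\sum_n g_n(|x|)Y_n$ into $\kappa$-spherical harmonics, I exploit that the double zeros $q_{\cdot,k}$ with $k\ge m+2$ of the relevant $f_{\alpha_\kappa+\cdots}$ all exceed $\lambda$: at each such radius the pointwise bound $(-1)^mf\le0$ and the vanishing of the spherical average force $f\equiv0$ on the whole sphere, while the resulting local maximum of $r\mapsto(-1)^mf(r\xi)$ gives $\partial_r f=0$ there too; hence every component $g_n$ acquires a double zero at each $q_{\cdot,k}$, $k\ge m+2$. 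Feeding $\omega_{\alpha_\kappa+n}(t)g_n(t)$ into Lemmas~\ref{lem-1} and~\ref{lem-2}, exactly as in the uniqueness argument for $f_{\alpha,m}$ in Theorem~\ref{thm-H}, forces $g_n(t)=c_n\,t^{2s+2-n}f_{\alpha_\kappa+s+1,m}(t)$, with $c_n=0$ unless $n$ is even and $0\le n\le2s+2$. Reassembling yields $f=r(x)f_{\alpha_\kappa+s+1,m}(|x|)$ with $r$ of the form~\eqref{r-repr}; in parts~(i) and~(iii) only $n=0$ survives, so $f$ is radial and the extremizer is unique.

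The hardest point is part~(ii). The weaker origin constraints pin down only the radial component, so one must control all higher harmonic components $g_n$ at once; moreover the auxiliary weight $\omega_{\alpha_\kappa+n}$ ceases to dominate the product $\omega_{\alpha_\kappa+n}g_n$ on $\mathbb{R}$ once $n$ is large, so the clean per-component application of Lemma~\ref{lem-2} must be supplemented by a type/zero-counting argument to rule out components with $n>2s+2$. Finally, translating the single scalar condition $(-1)^mf\le0$ into the nonnegativity $r(x)\ge0$ for $|x|\ge q_{\alpha_\kappa+s,m+1}$ requires combining the Fischer decomposition with the precise sign pattern of $f_{\alpha_\kappa+s+1,m}$ on the successive intervals between its zeros; this last step is where the bulk of the work lies.
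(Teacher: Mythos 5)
Your first half coincides with the paper's argument: the $v_\kappa$-weighted spherical average, the transfer of all moment/sign constraints via \eqref{int-rad}--\eqref{B-D}, the inequality $\lambda((-1)^m\widetilde f\,)\tau(\widetilde f\,)\le\lambda((-1)^mf)\tau(f)$, and the lifting of the radial Hankel extremizers from Theorems~\ref{thm-H} and~\ref{thm-HUG} are exactly how the paper obtains both bounds in (i)--(iii). The genuine problem is the uniqueness/characterization part, and there your proposal has conceded rather than closed the gap: the two steps you explicitly defer --- excluding $\kappa$-harmonic components of degree $n>2s+2$, and passing from the scalar condition $(-1)^mf\le 0$ to the representation $r(x)=\sum_{j=0}^{s+1}|x|^{2s+2-2j}h_{2j}(x)$ with $h_0>0$, $h_{2j}(0)=0$, together with the verification of \eqref{vsp-2} --- are precisely the content of part~(ii) beyond what the one-dimensional theorems give. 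Moreover, your own diagnosis of why the per-component route stalls is correct: the projection $g_n$ inherits from $f$ only integrability against $\lambda^{2m}\,d\nu_{\alpha_\kappa}$, not against the heavier weight $d\nu_{\alpha_\kappa+n}$, so $\omega_{\alpha_\kappa+n}g_n$ need not be bounded on $\mathbb{R}$ and Lemma~\ref{lem-2} cannot be applied at order $\alpha_\kappa+n$ for large $n$; no substitute argument is supplied. A plan that flags its hardest steps as open does not prove the uniqueness statements in any of (i)--(iii).

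The paper closes this by never decomposing into spherical harmonics. For an extremizer $f$ it fixes a direction and works with the slice $f_{x'}(\lambda)=f(\lambda x')$: by \eqref{repres} this is an even entire function of exponential type bounded on $\mathbb{R}$, and $f_{x'}\in L^1(\mathbb{R}_+,\lambda^{2m}\,d\nu_{\alpha_\kappa})$ by Fubini; since (as in your averaging step) $f$ vanishes identically on the spheres $|x|=q_{\alpha_\kappa+s+1,j}$, $j\ge m+1$, with double zeros for $j\ge m+2$, Lemmas~\ref{lem-1} and~\ref{lem-2} apply to each slice exactly as in the uniqueness step of Theorem~\ref{thm-H} and yield $f_{x'}=r_{x'}f_{\alpha_\kappa+s+1,m}$ with $r_{x'}$ an even polynomial of degree at most $2s+2$ --- a bound that is uniform in $x'$ and in which no harmonic degree ever enters. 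Matching the homogeneous Taylor components $A_l(x')|x|^{2l}$ of $f$ against the expansion of $f_{\alpha_\kappa+s+1,m}$ then shows that $r(x)$ is a genuine polynomial in $x$ of degree $2s+2$, which automatically rules out harmonics of degree $>2s+2$. Finally, the conditions on $r$ are computed, not asserted: comparing the radial average of $r(x)f_{\alpha_\kappa+s+1,m}(|x|)$ with the unique radial extremizer $c\lambda^{2s+2}f_{\alpha_\kappa+s+1,m}(\lambda)$ forces \eqref{eq-con2}, the Fischer decomposition converts this into $h_0>0$ and $h_{2j}(0)=0$, and sufficiency (admissibility of every such $f$, together with $\Delta_\kappa^m\mathcal{F}_\kappa(f)(0)=\Delta_\kappa^sf(0)=0$) is verified directly from \eqref{D-diff}. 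If you wish to keep the harmonic-decomposition route, you must supply the missing exclusion argument and carry out this characterization; otherwise the proposal should simply adopt the directional-slice argument.
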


\begin{rem}\label{remrem}
(1)
 For
$s=0$,  the class of admissible functions in part~(iii) of Theorem \ref{thm-D}
contains
admissible functions from part~(i).

(2)
For $\kappa=0$, part~(i) implies Theorem~\ref{thm-F}, part (ii) implies
Theorem~\ref{thm-FU}, and part~(iii) implies Remark \ref{rem-FU}.

(3) In part~(ii),  if a
polynomial $r(x)$ is nonnegative on $\mathbb{R}^d$, then it is an even
homogeneous polynomial of order $2s+2$.
\end{rem}

\begin{proof}
Our main idea is to reduce the proof of Theorem \ref{thm-D} to the case of
Hankel transform of radial functions.
 Using polar coordinates, we have
\begin{align*}
c_{\kappa}\int_{\mathbb{R}^{d}}f(x)v_{\kappa}(x)\,dx&=
\int_{0}^{\infty}\int_{\mathbb{S}^{d-1}}f(\lambda
x')\,c_{\kappa}v_{\kappa}(x')\,d\omega_\kappa(x')\,\lambda^{2\alpha_{\kappa}+1}\,d\lambda\\
&=\int_{0}^{\infty}\int_{\mathbb{S}^{d-1}}f(\lambda
x')\,d\omega_\kappa(x')\,d\nu_{\alpha_\kappa}(\lambda),
\end{align*}
where $d\nu_{\alpha_\kappa}$ is given by \eqref{def-b-nu},
$\mathbb{S}^{d-1}=\{x'\in\mathbb{R}^d\colon |x'|=1\}$ is the Euclidean sphere,
and $d\omega_\kappa(x')=b_{\alpha_{\kappa}}^{-1}c_{\kappa}v_\kappa(x')\,dx'$ is
{a probability measure on} $\mathbb{S}^{d-1}$ \cite[Sec.~2.2]{Ro03b}. In particular, for a radial
function $f(x)=f_{0}(|x|)$ one has
\begin{equation}\label{int-rad}
\mathcal{F}_{\kappa}(f)(0)=
c_{\kappa}\int_{\mathbb{R}^{d}}f(x)v_{\kappa}(x)\,dx=
\int_{0}^{\infty}f_{0}(\lambda)\,d\nu_{\alpha_\kappa}(\lambda).
\end{equation}

Let now $f\in \mathcal{E}_{\kappa}(\mathbb{R}^{d})$ be a function of type $\tau$, written
$f(x)=\int_{B_{\tau}^{d}}e_{\kappa}(x,y)\,d\mu(y)$.
We consider its radial part
$
f_{0}(\lambda)=\int_{\mathbb{S}^{d-1}}f(\lambda x')\,d\omega_{\kappa}(x').
$ 
 Due to the well-known formula \cite[Corollary~2.5]{Ro03b}
\[
\int_{\mathbb{S}^{d-1}}e_{\kappa}(\lambda
x',y)\,d\omega_{\kappa}(x')=j_{\alpha_{\kappa}}(\lambda |y|),\quad y\in \mathbb{R}^{d},
\]
we conclude that $f_{0}$ can be represented by
\begin{equation}\label{f0-repr}
f_{0}(\lambda)=\int_{B_{\tau}^{d}}j_{\alpha_{\kappa}}(\lambda |y|)\,d\mu(y)=
\int_{0}^{\tau}j_{\alpha_{\kappa}}(\lambda t)\,d\sigma(t),
\end{equation}
where $\sigma$ is a function of bounded variation. It is also clear that if
$d\mu$ in \eqref{f0-repr} is nonnegative in some neighborhood of the origin (or
everywhere), then $d\sigma$ satisfies the same property.

In light of \eqref{int-rad} and \eqref{f0-repr},
  we derive that
\begin{equation}\label{B-D}
\begin{gathered}
B_{\alpha_{\kappa}}^{r}\mathcal{H}_{\alpha_{\kappa}}(f_{0})(0)=
\Delta_{\kappa}^{r}\mathcal{F}_{\kappa}(f)(0)=(-1)^{r}
c_{\kappa}\int_{\mathbb{R}^{d}}|x|^{2r}f(x)v_{\kappa}(x)\,dx,\\
B_{\alpha_{\kappa}}^{r}f_{0}(0)=\Delta_{\kappa}^{r}f(0)=(-1)^{r}\int_{\mathbb{R}^{d}}|y|^{2r}\,d\mu(y).
\end{gathered}
\end{equation}

In virtue of these relationships we note { that} if a function $f$ is admissible in any of  problems  (i)--(iii) in Theorem \ref{thm-D}, then
its radial part
$f_{0}(|x|)$ is also admissible in the same problem and  $\lambda((-1)^{m}f_{0})\tau(f_{0})\le \lambda((-1)^{m}f)\tau(f)$.
 Hence, the corresponding infimums are attained on  radial functions.


{Formulas \eqref{f0-repr} and \eqref{B-D}} also imply that radial
extremizers in problems (i)--(iii) coincide with extremizers in Theorems
\ref{thm-H} and \ref{thm-HUG} for Hankel transforms. Thus, the functions
$f_{\alpha_{\kappa},m}(|x|)$, $|x|^{2s+2}f_{\alpha_{\kappa}+s+1,m}(|x|)$, and
$f_{\alpha_{\kappa}+s,m}(|x|)$ are extremizers for problems (i), (ii), and
(iii), respectively.

Note that for any admissible function $f$ from part~(i), taking into account Theorem \ref{thm-H}, we have that
$\Delta_{\kappa}^{m}\mathcal{F}_{\kappa}(f)(0)=B_{\alpha_{\kappa}}^{m}\mathcal{H}_{\alpha_{\kappa}}(f_{0})(0)\ge
0$. This implies part~(1) of Remark~\ref{remrem}.

It is left to prove the uniqueness of extremizers in problems (i)--(iii).

\smallbreak Part~(ii). { Let $\tau=2$,
$q_{j}'=q_{\alpha_{\kappa}+s+1,j}$, and $f$ be an extremizer. 
Then $(-1)^{m+1}f(x)\ge 0$ for $|x|\ge q_{m+1}'$ and its radial part is
\begin{equation}\label{eq-con1--}
f_{0}(\lambda)=c\lambda^{2s+2}f_{\alpha_{\kappa}+s+1,m}(\lambda), \quad c>0.
\end{equation}
Therefore,
$\int_{\mathbb{S}^{d-1}}f(q_{j}'x')\,d\omega_{\kappa}(x')=0$ for $j\ge m+1$,
which gives $f(x)=0$ if $|x|=q_{j}'$, 
$j\ge m+1$. Moreover, $f\in L^1(\mathbb{R}^d,|x|^{2m}v_{\kappa}(x)\,dx)$, since, in light of \eqref{f-a-s-1},
\[
c_{\kappa}\int_{|x|\ge
q_{m+1}'}|x|^{2m}|f(x)|v_{\kappa}(x)\,dx=(-1)^{m+1}c\int_{q_{m+1}'}^{\infty}
\lambda^{2m+2s+2}f_{\alpha_{\kappa}+s+1,m}(\lambda)\,d\nu_{\alpha_\kappa}(\lambda)<\infty.
\]

Denote $f(x)=f(\lambda x')=f_{x'}(\lambda)$, where $\lambda=|x|$, $x'=x/|x|$.
Since  $f$ is even and
$f_{x'}(\lambda)=\int_{B_{\tau}^{d}}e_{\kappa}(\lambda x',y)\,d\mu(y)$, then  $f_{x'}$ is the even entire function of exponential type  $\tau$ bounded
on $\mathbb{R}$. By Fubini's theorem,  $f_{x'}\in
L^{1}(\mathbb{R}_{+},\lambda^{2m}\,d\nu_{\alpha_{\kappa}})$.

The function  $f_{x'}(\lambda)$ keeps its sign for $\lambda\ge q_{m+1}'$ and
$f_{x'}(q_{j}')=0$ for $j\ge m+1$. Hence, $q_{j}'$ are double zeros for $j\ge m+2$.
Therefore, we have
\begin{equation}\label{f-r-f}
f_{x'}(\lambda)=r_{x'}(\lambda)f_{\alpha_{\kappa}+s+1,m}(\lambda)
\end{equation}
with some even entire function $r_{x'}(\lambda)$ of exponential type.
Similar to the proof of uniqueness  of extremizer  $f_{\alpha,m}$ in Theorem~\ref{thm-H}, using
Lemmas \ref{lem-1} and \ref{lem-2}, we obtain that
$r_{x'}(\lambda)$ is an even polynomial of degree at most $2s+2$ (otherwise  $f_{x'}\notin
L^{1}(\mathbb{R}_{+},\lambda^{2m}\,d\nu_{\alpha_{\kappa}})$).


Thus, by \eqref{f-r-f}, we have
\begin{equation}\label{frf}
f(x)=r(x)f_{\alpha_{\kappa}+s+1,m}(|x|),
\end{equation}
where $r(x)=\sum_{k=0}^{s+1}c_{k}(x')|x|^{2k}$.
Taylor's expansions are given by
\[
f(x)=\sum_{l=0}^{\infty}A_l(x')|x|^{2l},\quad f_{\alpha_{\kappa}+s+1,m}(|x|)=\sum_{j=0}^{\infty}a_j|x|^{2j},
\]
where $A_l(x')$ are homogeneous polynomials of degree  $2l$, $A_0(x')=A_0$, and 
$a_0=1$. Therefore, we arrive at the linear system
\[
\sum_{j=0}^lc_j(x')a_{l-j}=A_l(x'),\quad l=0,1,\dots,s+1,
\]
 in   variables $c_j(x')$.
We derive that 
\[
c_0=A_0,\quad c_j(x')=A_{j}(x')+\sum_{i=0}^{j-1}b_{ij}A_{i}(x'),\quad
j=1,\dots,l.
\]
{ Thus, $c_j(x')=A_{j}(x')+\sum_{i=0}^{j-1}b_{ij}A_{i}(x')|x|^{2j-2i}$} are
homogeneous polynomials of degree  $2j$, $j=1,\dots,l$, and then
$r(x)$ is an even polynomial of degree  $2s+2$.

\bigskip
Now we find
under which  conditions on $r$ the function $f$ is an extremizer.
 Since $\lambda((-1)^{m}f)=q_{m+1}'$, we necessarily have
\[
r(x)\ge 0,\quad |x|\ge q_{m+1}'.
\]
We write $r(x)=\sum_{k=0}^{s+1}r_{2k}(x)$, where $r_{2k}(x)$
are
homogeneous polynomials of degree  $2k$.
{By \eqref{frf} and \eqref{eq-con1--},}
\[
f_{0}(\lambda)=f_{\alpha_{\kappa}+s+1,m}(\lambda)
\sum_{k=0}^{s+1}\int_{\mathbb{S}^{d-1}}\lambda^{2k}r_{2k}(x')\,d\omega_{\kappa}(x')=
c\lambda^{2s+2}f_{\alpha_{\kappa}+s+1,m}(\lambda).
\]
{This implies}
\begin{equation}\label{eq-con2}
\int_{\mathbb{S}^{d-1}}r_{2k}(x')\,d\omega_{\kappa}(x')=0,
\quad k=0,1,\dots,s,\quad
\int_{\mathbb{S}^{d-1}}r_{2s+2}(x')\,d\omega_{\kappa}(x')>0.
%
\end{equation}
In particular, $r_0=0$. Furthermore,
 \eqref{eq-con2}, the
Fisher-type decomposition
$$r(x)=\sum_{j=0}^{s+1}|x|^{2s+2-2j}h_{2j}(x)$$ with
$h_{2j}(x)$ being even $\kappa$-harmonic polynomials of order at most $2j$, and the fact that
$h_{2j}(0)=\int_{\mathbb{S}^{d-1}}h_{2j}(x)\,d\omega_{\kappa}(x')$ imply that
\begin{equation}\label{eq-con3}
h_{0}>0,\quad h_{2j}(0)=0,\quad j=1,\dots,s+1.
\end{equation}
It is enough to verify that the function  $f(x)=r(x)f_{\alpha_{\kappa}+s+1,m}(|x|)$ is an extremizer.

Let us  show (\ref{vsp-2}). By Theorem \ref{thm-HUG},
for $k=0,1,\dots,m$ we have
\begin{align*}
&c_{\kappa}\int_{\mathbb{R}^{d}}|x|^{2k}f(x)v_{\kappa}(x)\,dx\\
&\quad =\sum_{j=0}^{s+1}\int_{0}^{\infty}\lambda^{2k+2s+2-2j}
f_{\alpha_{\kappa}+s+1,m}(\lambda)\,d\nu_{\alpha_\kappa}(\lambda)\int_{\mathbb{S}^{d-1}}h_{2j}(x)
v_{\kappa}(x')\,d\omega_\kappa(x')\\
&\quad =\sum_{j=0}^{s+1}h_{2j}(0)\int_{0}^{\infty}\lambda^{2k+2s+2-2j}
f_{\alpha_{\kappa}+s+1,m}(\lambda)\,d\nu_{\alpha_\kappa}(\lambda)\\
&\quad =h_0(0)\int_{0}^{\infty}\lambda^{2k+2s+2}
f_{\alpha_{\kappa}+s+1,m}(\lambda)\,d\nu_{\alpha_\kappa}(\lambda)=0.
\end{align*}
Since
\[
f(x)=\sum_{j=0}^{s+1}|x|^{2s+2-2j}h_{2j}(x)\sum_{k=0}^{\infty}c_k|x|^{2k}=\sum_{k=0}^{\infty}c_k\sum_{j=0}^{s+1}|x|^{2s+2-2j+2k}h_{2j}(x),
\]
both \eqref{D-diff} and \eqref{eq-con3} imply that
 $\Delta_{\kappa}^{l}f(0)=0$ for $l=0,1,\dots,s$. Thus,  condition  (\ref{vsp-2}) holds and moreover,
 $\Delta_{\kappa}^{m}\mathcal{F}_{\kappa}(f)=\Delta_{\kappa}^{s}f(0)=0$ is valid.

Finally, let us show that if $r(x)\ge 0$ on $\mathbb{R}^d$, then $r(x)$ is
homogeneous polynomials of degree   $2s+2$.
Assume that
 $r(x)=\lambda^{k_0}\sum_{k=k_{0}}^{s+1}\lambda^{k-k_{0}}r_{2k}(x')$,
 $x=\lambda x'$, where
 $1\le k_{0}\le s$
 and
$r_{2k_{0}}(x)\ne 0$ (recall that $r_0=0$). 
Using $\int_{\mathbb{S}^{d-1}}r_{2k_{0}}(x')\,d\omega_{\kappa}(x')=0$,
we derive  $r(\lambda x'_0)<0$ for some $x'_0\in \mathbb{S}^{d-1}$ and
sufficiently small  $\lambda>0$. This contradiction implies that  $r(x)=r_{2s+2}(x)$.


\smallbreak
\smallbreak Parts~(i) and~(iii) with  $s=0$.
Similar reasonings as above imply that any extremizer has the form
$cf_{\alpha_{\kappa},m}(|x|)$ with $c>0$.

\smallbreak Part~(iii) with $s\ge 1$. As in the proof of Theorem
\ref{thm-HUG}, we reduce the question about uniqueness of an extremizer $f$ in part~(iii) to similar problem in part~(ii) with $s-1$ in place of $s$.
Thus, we arrive at the function
$c
f_{\alpha_{\kappa}+s,m}(|x|)$, $c>0$.}

\end{proof}

\section{Chebyshev systems of normalized Bessel functions}\label{sec-cheb}

Recall that $N_{I}(f)$ stands for the number of zeros of $f$ on $I$, counting multiplicity.
A family of
 real-valued functions $\{\varphi_{k}(t)\}
 $
defined on an interval $I\subset \mathbb{R}$ is a Chebyshev system (T-system)
if for any $n\in \mathbb{N}$ and any nontrivial linear combination
$$P(t)=\sum_{k=1}^{n}A_{k}\varphi_{k}(t),$$ there holds $N_{I}(P)\le n-1$, see, e.g., \cite[Chap.~II]{Ac04}.

As above we assume that $\alpha\ge -1/2$, $q_{k}=q_{\alpha,k}$, and
$q_{k}'=q_{\alpha+1,k}$ for $k\in \mathbb{N}$. The main result of this section is the following theorem.

\begin{thm}\label{prop-bess}
\textup{(i)} The families of the Bessel functions
\begin{equation}\label{bess-sys}
\{j_{\alpha}(q_{k}t)\}_{k=1}^{\infty},\quad
\{1,j_{\alpha}(q_{k}'t)\}_{k=1}^{\infty}
\end{equation}
form Chebyshev systems on $[0,1)$ and $[0,1]$, respectively.

\textup{(ii)}~The families of the Bessel functions
\[
\{j_{\alpha+1}(q_{k}t)\}_{k=1}^{\infty},\quad
\{j_{\alpha}(q_{k}'t)-j_{\alpha}(q_{k}')\}_{k=1}^{\infty}
\]
form Chebyshev systems on $(0,1)$.
\end{thm}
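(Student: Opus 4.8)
The plan is to realize each listed family as the full set of eigenfunctions of a self-adjoint singular Sturm--Liouville problem for a Bessel operator $B_\beta$ on $[0,1]$ and then to invoke the classical Sturm theorem: a nontrivial linear combination of the first $n$ eigenfunctions, ordered by increasing eigenvalue, has at most $n-1$ zeros in the open interval, which is precisely the $T$-system property. Here $u(t)=j_\beta(\mu t)$ solves $B_\beta u=-\mu^2u$ and is regular at the singular endpoint $t=0$ (where the natural condition is $\lim_{t\to0}t^{2\beta+1}u'(t)=0$), while the boundary behaviour at $t=1$ is read off from the differentiation identities \eqref{j-diff-rec} and \eqref{j1-diff}.

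First I would dispose of the three families that are genuine eigenfunctions. For $\{j_\alpha(q_{\alpha,k}t)\}$ (so $\beta=\alpha$) the identity $j_\alpha(q_{\alpha,k})=0$ is the Dirichlet condition $u(1)=0$, the eigenvalues are precisely the $q_{\alpha,k}^2$, and Sturm's theorem gives the $T$-system property. For $\{1,j_\alpha(q_{\alpha+1,k}t)\}$, formula \eqref{j-diff-rec} gives $\frac{d}{dt}j_\alpha(\mu t)\big|_{t=1}\propto j_{\alpha+1}(\mu)$, which vanishes exactly for $\mu\in\{0,q_{\alpha+1,k}\}$; this is the Neumann problem for $B_\alpha$ with the constant as ground state. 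For $\{j_{\alpha+1}(q_{\alpha,k}t)\}$ (now $\beta=\alpha+1$), formula \eqref{j1-diff} yields $\frac{d}{dt}\bigl(t^{2\alpha+2}j_{\alpha+1}(\mu t)\bigr)\big|_{t=1}=2(\alpha+1)j_\alpha(\mu)$, i.e.\ the Robin condition $u'(1)+2(\alpha+1)u(1)=0$, vanishing exactly for $\mu=q_{\alpha,k}$; this separated condition is self-adjoint, so Sturm's theorem again applies. The half-open/closed versus open intervals in the statement are a bookkeeping matter: the Dirichlet endpoint $t=1$ (where the eigenfunctions vanish) is excluded, whereas a regular or Neumann endpoint may be adjoined once one fixes the convention for a zero of an even function at $t=0$.

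The only family that is not a set of eigenfunctions is $\{j_\alpha(q_{\alpha+1,k}t)-j_\alpha(q_{\alpha+1,k})\}$, which I would handle by differentiation and Rolle's theorem. For a nontrivial combination $R(t)=\sum_{k=1}^nA_k\bigl(j_\alpha(q_{\alpha+1,k}t)-j_\alpha(q_{\alpha+1,k})\bigr)$ one has $R(1)=0$, while \eqref{j-diff-rec} gives $R'(t)=-\frac{t}{2(\alpha+1)}\sum_kA_kq_{\alpha+1,k}^2\,j_{\alpha+1}(q_{\alpha+1,k}t)$. Since $j_{\alpha+1}(q_{\alpha+1,k})=0$, the functions $j_{\alpha+1}(q_{\alpha+1,k}t)$ are the Dirichlet eigenfunctions of $B_{\alpha+1}$, i.e.\ the first case with $\alpha$ replaced by $\alpha+1$, so $R'$ has at most $n-1$ zeros on $(0,1)$. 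The decisive point is the direction of Rolle: because $R$ has the additional zero at $t=1$, if $R$ had $N$ zeros in $(0,1)$ it would have $N+1$ in $(0,1]$, forcing at least $N$ zeros of $R'$ in $(0,1)$; hence $N\le n-1$.

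The principal difficulty I foresee lies in two places. The first is verifying that the displayed Bessel zeros are \emph{exactly} the eigenvalues of the corresponding boundary value problem, so that the complete-eigenfunction hypothesis of Sturm's theorem is met; this uses the interlacing $q_{\alpha,j}<q_{\alpha+1,j}<q_{\alpha,j+1}$ to confirm that the $k$-th listed function has exactly $k-1$ interior zeros and hence sits in the correct spectral slot. The second is the Rolle step for the last family, which succeeds only in the upper-bound direction and only because the antiderivative combination vanishes at $t=1$; without that endpoint zero one would obtain a useless lower bound on the zeros of $R'$. By comparison, the endpoint bookkeeping and the passage from the open to the half-open/closed intervals are routine.
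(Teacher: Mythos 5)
Your overall strategy --- realize each family as the eigenfunctions of a singular Bessel--Sturm--Liouville problem on $[0,1]$ and invoke Sturm's theorem (Theorem~\ref{thm-sturm}) --- is exactly the paper's, and two of your steps coincide with the paper's proof: the Dirichlet/Neumann identification of the two families in \eqref{bess-sys} on the open interval, and the Rolle argument for $\{j_{\alpha}(q_{k}'t)-j_{\alpha}(q_{k}')\}$ using the zero at $t=1$ and the Dirichlet system for $B_{\alpha+1}$. Your treatment of $\{j_{\alpha+1}(q_{k}t)\}$ is genuinely different: you make it the eigenfamily of the Robin problem $u'(1)+2(\alpha+1)u(1)=0$ for $B_{\alpha+1}$ (correct, by \eqref{j1-diff}, and admissible in Theorem~\ref{thm-sturm} since $H=2(\alpha+1)>0$), with the spectral slots pinned down by interlacing; the paper instead multiplies a putative combination by $t^{2\alpha+2}$, uses the forced zero at $t=0$ plus Rolle via \eqref{j1-diff}, and reduces to the Dirichlet family $\{j_{\alpha}(q_{k}t)\}$ on $(0,1)$. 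Both are sound; yours is more uniform (everything except the last family is spectral), while the paper's avoids having to argue that a Robin problem has no eigenvalues other than the listed ones.

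The gap is your dismissal of the endpoints as ``bookkeeping.'' Sturm's theorem controls zeros only in the open interval $(0,1)$, whereas part~(i) of the statement asserts the Chebyshev property on $[0,1)$ and $[0,1]$. What must be ruled out is that a nontrivial combination which already has the maximal number $n-1$ of zeros in $(0,1)$ vanishes \emph{in addition} at an adjoined endpoint; this is a genuine assertion, not a matter of convention. It is in fact delicate: for $\alpha=-1/2$, $n=2$, the combination $\cos(\pi t/2)-\cos(3\pi t/2)=2\sin(\pi t/2)\sin(\pi t)$ vanishes to second order at $t=0$ and nowhere else on $[0,1)$, so if zeros at $t=0$ were counted with multiplicity the extension would even be false; the true content is that vanishing at $t=0$ is incompatible with $n-1$ further distinct zeros inside, and that needs a proof. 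The paper supplies it by the same downward-Rolle mechanism you used for the last family: if $P=\sum_{k=1}^{n}A_{k}j_{\alpha}(q_{k}t)$ had $n$ distinct zeros on $[0,1)$, then $P(0)=0$ by the interior bound and automatically $P(1)=0$, so $P'$ has $n$ distinct zeros in $(0,1)$; but by \eqref{j-diff-rec}, $P'(t)=-\frac{t}{2(\alpha+1)}\sum_{k}A_{k}q_{k}^{2}j_{\alpha+1}(q_{k}t)$ is, up to the factor $t$, a nontrivial combination of $n$ members of the Chebyshev system $\{j_{\alpha+1}(q_{k}t)\}$ on $(0,1)$ --- contradiction. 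The analogous argument with $\{j_{\alpha+1}(q_{k}'t)\}$ (the Dirichlet system for $B_{\alpha+1}$) handles the Neumann family on all of $[0,1]$, where the endpoint $t=1$ must be dealt with as well. You have every ingredient needed --- your Robin result is precisely the Chebyshev property of the derivative family --- but this step is a Rolle argument of the kind you yourself called ``decisive,'' and it is absent from your proposal.
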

For $\alpha=-1/2$ this theorem becomes the well-known result for trigonometric systems, which has many applications in approximation theory
(see \cite[Chap.~II]{Ac04}).
For $\alpha>-1/2$ this result seems to be new.

We will use the following Sturm's theorem on zeros of linear combinations of
eigenfunctions of Sturm--Liouville problem. This result is not widely known in
the literature, see the discussion in \cite{BH17}.


\begin{thm}[Sturm, 1836; Liouville, 1836]\label{thm-sturm}
Let $\{V_{k}\}_{k=1}^{\infty}$ be the system of eigenfunctions associated to
eigenvalues $\rho_{1}<\rho_{2}<\dots$ of the following Sturm--Liouville problem
on the interval $[a,b]$:
\begin{equation}\label{sturm-liuv-vsp}
(KV')'+(\rho G-L)V=0,\quad (KV'-hV)(a)=0,\quad (KV'+HV)(b)=0,
\end{equation}
where $G,K,L\in C[a,b]$, $K\in C^{1}(a,b)$, $K,G>0$ on $(a,b)$,
 $h,H\in [0,\infty]$ and $\rho$ denotes the spectral parameter.

Then for any nontrivial real polynomial of the form
\[
P=\sum_{k=m}^{n}A_{k}V_{k},\quad m,n\in \mathbb{N},\quad m\le n,
\]
we have
\[
m-1\le N_{(a,b)}(P)\le n-1.
\]
In particular, every $k$-th eigenfunction $V_{k}$ has exactly $k-1$ simple {zeros in $(a,b)$}.
\end{thm}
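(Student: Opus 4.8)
The plan is to recast the eigenvalue problem \eqref{sturm-liuv-vsp} in operator form and then control the zeros of $P$ by tracking them along a parabolic flow. Define the Sturm--Liouville operator $\mathcal{L}u=-\frac{1}{G}\bigl[(Ku')'-Lu\bigr]$, so that $\mathcal{L}V_k=\rho_k V_k$. Under the stated hypotheses this is a self-adjoint operator on $L^2((a,b),G\,dx)$ with the boundary conditions in \eqref{sturm-liuv-vsp}; its spectrum is discrete and simple, $\rho_1<\rho_2<\cdots$, and the $V_k$ form an orthogonal basis. I will take as known the classical oscillation theorem that $V_1>0$ on $(a,b)$ and that $V_k$ has exactly $k-1$ simple zeros in $(a,b)$; the final ``in particular'' assertion is then just the case $m=n=k$ of the two-sided bound. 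Since $\sum_{k=m}^n A_k V_k$ is a special case of $\sum_{k=1}^n A_k V_k$, it suffices to prove the upper bound for combinations of $V_1,\dots,V_n$ and the lower bound for combinations of $V_m,\dots,V_n$.

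For the bounds I would attach to $P$ the solution of $\partial_t u=-\mathcal{L}u$ with $u(\cdot,0)=P$. Because $P=\sum_{k=m}^n A_k V_k$ is a finite spectral sum, this solution is explicit,
\[
u(x,t)=\sum_{k=m}^n A_k e^{-\rho_k t}V_k(x),
\]
so $u(\cdot,t)$ is $C^2$ in $x$ for each $t$, entire in $t$, and, solving in $x$ the linear ODE $\prod_{k=m}^n(\mathcal{L}-\rho_k)u=0$ of order $2(n-m+1)$ (and being real-analytic in $x$ in all cases where we apply the theorem), has only isolated zeros; hence the zero count $Z(t):=N_{(a,b)}(u(\cdot,t))$, taken with multiplicity, is finite. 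The \emph{crux} of the argument is the Sturmian monotonicity: $Z(t)$ is non-increasing in $t$. This is the classical nodal-count theorem for one-dimensional parabolic equations, and I would invoke the form established in \cite{BH17}.

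Granting the monotonicity, both bounds follow from the asymptotics of $u(\cdot,t)$ as $t\to\pm\infty$. As $t\to-\infty$ the top mode dominates, $u=A_n e^{-\rho_n t}(V_n+o(1))$ in $C^1$, so the zeros of $u$ converge to the $n-1$ simple zeros of $V_n$ and $Z(t)=n-1$ for $t$ large negative; monotonicity then gives $N_{(a,b)}(P)=Z(0)\le n-1$. Symmetrically, as $t\to+\infty$ the bottom mode dominates, $u=A_m e^{-\rho_m t}(V_m+o(1))$, so $Z(t)=m-1$ for $t$ large positive and $Z(0)\ge m-1$. (If $A_m$ or $A_n$ vanishes one simply replaces $m,n$ by the true extreme indices, which only strengthens the inequalities.) I should note that the separated boundary conditions in \eqref{sturm-liuv-vsp} are exactly what prevents zeros from entering or leaving through the endpoints in these limits: at a Dirichlet endpoint every $V_k$ vanishes, so the perturbation $u-A_n e^{-\rho_n t}V_n$ vanishes there as well, while at a Robin or Neumann endpoint $V_n$ stays nonzero, so no spurious boundary zero appears.

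The lower bound also admits a soft alternative avoiding the flow, using only the upper bound together with self-adjointness, which I would add as a remark. If $P$ had $j\le m-2$ sign changes, at $x_1<\cdots<x_j$, choose a nontrivial $W\in\operatorname{span}(V_1,\dots,V_{j+1})$ vanishing at the $x_i$ (possible, as this is $j$ linear conditions on $j+1$ coefficients). By the upper bound $W$ has at most $j$ zeros in $(a,b)$, hence changes sign exactly at the $x_i$; therefore $PW$ does not change sign and $\int_a^b PW\,G\,dx\ne0$. But $j+1\le m-1$, so $W\perp P$ in $L^2(G\,dx)$, a contradiction. The main difficulty of the whole argument is thus concentrated in the monotonicity of $Z(t)$, in particular its behaviour at multiple interior zeros and at the boundary; everything else is bookkeeping with the explicit spectral sum.
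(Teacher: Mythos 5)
A preliminary remark on the comparison: the paper does not prove Theorem~\ref{thm-sturm} at all. It is quoted as a classical result, the proof being outsourced to Liouville's argument as presented in \cite{BH17}; the authors' only addition is the observation (in the text right after the statement) that Liouville's purely ODE/orthogonality-based proof uses the positivity of $K,G$ only on the open interval, which is what lets them cover the singular case. Your proposal instead follows Sturm's original heat-flow strategy; note that your ``soft alternative'' for the lower bound is precisely Liouville's orthogonality argument, i.e.\ one half of the route the paper actually relies on.

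The genuine gap sits in the step you yourself identify as the crux, the monotonicity of $Z(t)$, and it is twofold. First, this monotonicity is not ``the form established in \cite{BH17}'': what the present paper extracts from \cite{BH17} is Liouville's ODE proof, whereas a rigorous parabolic nodal-count theorem (finiteness of the zero set, non-increase in time, strict drop at multiple zeros -- which you need because $N_{(a,b)}$ counts multiplicity -- and non-creation of zeros at the boundary for \emph{all} times, not merely in the limits $t\to\pm\infty$ that you discuss) is a twentieth-century result in the line of Matano and Angenent, proved for \emph{uniformly} parabolic equations with regular coefficients and Dirichlet/Neumann/Robin conditions. Second, and decisively for the use made of the theorem here: the hypotheses deliberately allow $K,G$ to vanish at the endpoints ($K,G>0$ on $(a,b)$ only, $K\in C^{1}(a,b)$), and this singular case is exactly the one required -- in the proof of Theorem~\ref{prop-bess} the theorem is applied to \eqref{sturm-liuv-bess} with $K=G=t^{2\alpha+1}$, which vanishes at $t=0$ for $\alpha>-1/2$. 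There your flow equation $G\,\partial_{t}u=(Ku')'-Lu$ is degenerate-parabolic at an endpoint, and none of the ingredients you import off the shelf (self-adjointness with discrete, simple spectrum and an orthogonal eigenbasis; the oscillation theorem for each $V_{k}$; above all the nodal monotonicity with boundary control) applies as stated; each would need a separate argument near the degenerate endpoint. So, even granting all your citations, your proof covers only the regular case, whereas both the statement and the paper's application of it are specifically designed for the singular one.
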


For trigonometric system this result is 
  known  as the Sturm--Hurwitz theorem (see, e.g., \cite{AHP05}).


Note that in the proof given by Liouville (see \cite{BH17}) it is enough to assume that
$K,G>0$ only on the interval $(a,b)$.
This allows us to include the singular case, that is, when
  $K$ and $G$ may have zeros at the endpoints of $[a,b]$. In particular, we may deal with the Sturm--Liouville problem for Bessel functions.

\begin{proof}[Proof of Theorem \ref{prop-bess}]
We will use the fact that, by Theorem \ref{thm-sturm}, the system of
eigenfunctions $\{V_{k}\}_{k=1}^{\infty}$ is the Chebyshev system. We note that
\eqref{bess-sys} are the families of eigenfunctions for the (singular for
$\alpha>-1/2$) Sturm--Liouville problem {(see \cite{Le51})}
\begin{equation}\label{sturm-liuv-bess}
\begin{gathered}
(t^{2\alpha+1}u'(t))'+\lambda^{2}t^{2\alpha+1}u(t)=0,\quad
t\in [0,1],\\
u'(0)=0,\quad
\cos \theta\,u(1)+\sin \theta\,u'(1)=0,
\end{gathered}
\end{equation}
{where $\theta\in [0,\pi/2]$ and
$\lambda^{2}$ is the spectral parameter}. Here
for the family $\{j_{\alpha}(q_{k}t)\}_{k=1}^{\infty},$ we assume the Dirichlet conditions $\theta=0$ and $u(1)=0$
 and, for $\{1,j_{\alpha}(q_{k}'t)\}_{k=1}^{\infty}$, the Neumann conditionds $\theta=\pi/2$ and $u'(1)=0$.


 In virtue of \eqref{j-diff-rec},
 we have 
\[
\cos \theta\,j_{\alpha}(\lambda)-
\sin \theta\,\frac{\lambda^{2}}{2(\alpha+1)}\,j_{\alpha+1}(\lambda)=0,
\]
or, equivalently,
\begin{equation}\label{sturm-liuv-bess+}
\cos \theta\,J_{\alpha}(\lambda)-\sin \theta\,\lambda J_{\alpha+1}(\lambda)=
AJ_{\alpha}(\lambda)+B\lambda J_{\alpha}'(\lambda)=0,
\end{equation}
where $A=\cos \theta-\alpha \sin \theta$, $B=\sin \theta$. 
{Since} $A/B+\alpha=\tan \theta\ge 0$, $\alpha>-1$,
{we have that} equation \eqref{sturm-liuv-bess+} has only real roots {(see \cite[Sec.~7.9]{BE53})}.
 Due to evenness, 
  it is enough to consider only nonnegative zeros, which we denote by
 $0\le r_{1}<r_{2}<\dots$. Then the eigenvalues and the eigenfunctions of the Sturm--Liouville problem
\eqref{sturm-liuv-bess} are $r_{k}^{2}$ and
$j_{\alpha}(r_{k}t)$, $k\in \mathbb{N}$, respectively. In particular, we have $r_{k}=q_{k}$ for
$\theta=0$ and $r_{k}=q_{k-1}'$ for $\theta=\pi/2$, where we put $q_{0}'=0$.

The Sturm--Liouville problem (\ref{sturm-liuv-bess}) is a particular case of the
problem (\ref{sturm-liuv-vsp}); take
 $K=G=w$, $L=0$, $r=\lambda^{2}$, $h=0$, and
$H=\cot \theta$. Then the statement of part~(i) is valid for the interval
$(0,1)$. In order to include the endpoints, we first prove part~(ii).



Let us show that the family $\{j_{\alpha+1}(q_kt)\}_{k=1}^{\infty}$ is the Chebyshev system on $(0,1)$.
Assume that the polynomial 
$P(t)=\sum_{k=1}^{n}A_{k}j_{\alpha+1}(q_{k}t)$ has $n$ zeros on $(0,1)$.
We consider $F(t)=t^{2\alpha+2}P(t)$. It {has at least} $n+1$ zeros including $t=0$.
By Rolle's theorem, for a smooth real function $f$ one has $N_{(a,b)}(f')\ge
N_{(a,b)}(f)-1$ (see \cite{BH17}).
 Thus, $P'$ has at least
$n$ zeros on $(0,1)$. In light of \eqref{j1-diff}, we obtain
\[
F'(t)=2(\alpha+1)t^{2\alpha+1}\sum_{k=1}^{n}A_{k}j_{\alpha}(q_{k}t).
\]
This contradicts the fact that
$\{j_{\alpha}(q_{k}t)\}_{k=1}^{\infty}$ is the Chebyshev system on $(0,1)$.

To prove that $\{j_{\alpha}(q_{k}'t)-j_{\alpha}(q_{k}')\}_{k=1}^{\infty}$ is the Chebyshev system on $(0,1)$, assume that
$P(t)=\sum_{k=1}^{n}A_{k}(j_{\alpha}(q_{k}'t)-j_{\alpha}(q_{k}'))$ has $n$
zeros on $(0,1)$. Taking into account the zero $t=1$,
 its derivative (see \eqref{j-diff-rec})
\[
P'(t)=-\frac{t}{2\alpha+2}\sum_{k=1}^{n}A_{k}q_{k}'^{2}j_{\alpha+1}(q_{k}'t)
\]
has at least $n$ zeros on $(0,1)$. This contradicts the fact that
 $\{j_{\alpha+1}(q_{\alpha+1,k}t)\}_{k=1}^{\infty}$ is the Chebyshev system on $(0,1)$.

Now we are in a position to show that the first system in \eqref{bess-sys} is Chebyshev on $[0,1)$.
Note that if 
$P(t)=\sum_{k=1}^{n}A_{k}j_{\alpha}(q_{k}t)$ has $n$ zeros on $[0,1)$, then always $P(0)=0$. Moreover, $P(1)=0$. Therefore, $P'$ has at least $n$ zeros on $(0,1)$, which is impossible since $P'(t)=
-\frac{t}{2\alpha+2}\sum_{k=1}^{n}A_{k}q_{k}^{2}j_{\alpha+1}(q_{k}t)$ and
$j_{\alpha+1}(q_{k}t)$ is the Chebyshev system on $(0,1)$.

Similarly, if $P(t)=\sum_{k=0}^{n-1}A_{k}j_{\alpha}(q_{k}'t)$
 (we assume $q_{0}'=0$) has $n$ zeros on $[0,1]$, then one of the endpoints is a zero. Then
$P'(t)=-\frac{t}{2\alpha+2}\sum_{k=1}^{n-1}A_{k}q_{k}'^{2}j_{\alpha+1}(q_{k}'t)$
has at least $n-1$ zeros in $(0,1)$, which is impossible for Chebyshev system
$\{j_{\alpha+1}(q_{\alpha+1,k}t)\}_{k=1}^{\infty}$.
\end{proof}

\section{
An alternative proof of
positive definiteness of the function $g_{\alpha,m}$}\label{sec-lim}

In \cite{CC18},
the positive definiteness of the function $g_{d/2-1,m}$ given by \eqref{def-gam} was proved based on the use of classical translation operator in $\mathbb{R}^d$. This causes the restriction $\alpha=d/2-1$.
Another approach to see that $g_{\alpha,m}$ is positive definite, is to employ Bochner's theorem and
show that the Hankel transform of $g_{\alpha,m}$ is nonnegative, which is equivalent to fact that
the matrix of the generalized translations $(T_{\alpha}^{x_i}f(x_j))_{i,j=1}^{N}$ is positive definite, see Section~\ref{sec-H}.
Here we follow this approach and ideas from \cite{CC18}.






Let
$
R_{n}^{(\alpha)}(\theta)=\frac{P_{n}^{(\alpha,\alpha)}(\theta)}{P_{n}^{(\alpha,\alpha)}(1)}
$
be the normalized Jacobi polynomial and $-1<r_n<\dots<r_1<1$ be its zeros, see, e.g., \cite{Se62}.
Define the generalized translation operator on $[-1,1]$
as follows\begin{equation}\label{eq50}
\tau^{\theta}f(\rho)=c_{\alpha}\int_{0}^{\pi}f(\sqrt{1-\theta^2}
\sqrt{1-\rho^2}+2\theta\rho\cos\varphi)\sin^{2\alpha}\varphi\,d\varphi,
\end{equation}
{where $c_{\alpha}$ is defined in} \eqref{eq7}.
We remark that
$
\tau^{\theta}R_n^{(\alpha)}(\rho)=R_n^{(\alpha)}(\theta)R_n^{(\alpha)}(\rho).
$

Consider the polynomial
$p_{n-k}(\theta)=\frac{R_n^{(\alpha)}(\theta)}{(\theta-r_1)\cdots(\theta-r_k)}.
$
It was shown in \cite{CK07} that
\[
p_{n-k}(\theta)=\sum_{s=0}^{n-k}a_sR_s^{(\alpha)}(\theta),\quad a_s\ge 0,\quad
i=0,\dots,n-k.
\]
This implies that for any choice of $\theta_1,\dots, \theta_N\subset [-1,1]$
the matrix $(\tau^{\theta_i}p_{n-k}(\theta_j))_{i,j=1}^n$ is { positive semidefinite}, i.e.,
\begin{align*}
&\sum_{i,j=1}^Nc_i\overline{c_j}\,\tau^{\theta_i}p_{n-k}(\theta_j)=\sum_{s=0}^{n-k}a_s
\sum_{i,j=1}^Nc_i\overline{c_j}\,\tau^{\theta_i}R_s^{(\alpha)}(\theta_j)\\
&\quad =\sum_{s=0}^{n-k}a_s\sum_{i,j=1}^Nc_i\overline{c_j}\,R_s^{(\alpha)}(\theta_i)R_s^{(\alpha)}(\theta_j)=
\sum_{s=0}^{n-k}a_s\Bigl|\sum_{i=1}^Nc_iR_s^{(\alpha)}(\theta_i)\Bigr|^2\ge 0.
\end{align*}

Recall again that $q_{i}=q_{\alpha,i}$ are zeros of $j_\alpha(y)$ and
$
g_k(y)=\frac{j_{\alpha}(y)}{(q_1^2-y^2)\cdots (q_k^2-y^2)}.
$ 
We note (see \cite[Sec. 8.1]{Se62}) that
\[
\lim\limits_{n\to \infty}R_n^{(\alpha)}\Bigl(1-\frac{y^2}{2n^2}+o\Bigl(\frac{1}{n^2}\Bigr)\Bigr)=j_{\alpha}(y)
\]
 uniformly in $y\in [0,L]$ for any positive $L$.
Since (\cite[Sec. 8.1]{Se62})
\[
r_i=1-\frac{q_i^2}{2n^2}+o\Bigl(\frac{1}{n^2}\Bigr),
\]
then setting $\theta=1-y^2/(2n^2)+o(1/n^2)$, we obtain
\[
\lim\limits_{n\to
\infty}(2n^2)^k(\theta-r_1)\cdots(\theta-r_k)=(q_1^2-y^2)\cdots(q_k^2-y^2)
\]
 uniformly in $y\in [0,L]$. 

Let us show that there holds 
\begin{equation}\label{eq52}
\lim\limits_{n\to
\infty}(2n^2)^{-k}p_{n-k}\Bigl(1-\frac{y^2}{2n^2}+o\Bigl(\frac{1}{n^2}\Bigr)\Bigr)=g_k(y)
\end{equation}
uniformly in $y\in [0,L]$.
This is true on any interval without arbitrarily small neighborhoods of points $q_i$, $i=1,\dots,k$.
 Without loss of generality, it is enough to consider
   a small neighborhood of $q_1$,
since $({q_2^2-y^2})\cdots({q_k^2-y^2})$ is bounded away from zero in this neighborhood.


Using \eqref{eq3} implies
\begin{align*}
\frac{j_{\alpha}(y)}{q_1^2-y^2}&=
\frac{j_{\alpha}(y)-j_{\alpha}(q_1)}{q_1^2-y^2}
=
\sum_{\nu=1}^{\infty}\frac{(-1)^{\nu}\Gamma(\alpha+1)}{4^{\nu}\nu!\,
\Gamma(\nu+\alpha+1)}\,\frac{y^{2\nu}-q_1^{2\nu}}{q_1^2-y^2}
\\
&=\sum_{\nu=1}^{\infty}
\frac{(-1)^{\nu-1}\Gamma(\alpha+1)}{4^{\nu}\nu!\,
\Gamma(\nu+\alpha+1)}\sum_{s=0}^{\nu-1}y^{2s}q_1^{2(\nu-1-s)}
\\
&=\sum_{s=0}^{\infty}\Bigl(\frac{y^2}{q_1^2}\Bigr)^s\sum_{k=s}^{\infty}\frac{(-1)^k\Gamma(\alpha+1)q_1^{2k}}{4^{k+1}(k+1)!\,
\Gamma(k+\alpha+2)}\\
&=\frac{1}{4}\sum_{s=0}^{\infty}\sum_{l=0}^{\infty}\frac{\Gamma(\alpha+1)}{\Gamma(s+l+2)\Gamma(s+l+\alpha+2)}
\Bigl(-\frac{y^2}{4}\Bigr)^s\Bigl(-\frac{q_1^2}{4}\Bigr)^l.
\end{align*}
Similarly, if $\theta=1-y^2/(2n^2)+o(1/n^2)$, then \cite[Sec. 4.21]{Se62}
\begin{align*}
&{n+\alpha\choose n}\frac{R_n^{(\alpha)}(\theta)}{\theta-r_1}=\sum_{\nu=1}^{n}
\frac{\Gamma(n+\nu+2\alpha+1)\Gamma(n+\alpha+1)((\theta-1)^{\nu}-(r_1-1)^{\nu})}
{2^{\nu}\nu!\,\Gamma(n-\nu+1)\Gamma(n+2\alpha+1)\Gamma(\nu+\alpha+1)(\theta-r_1)}\\
&\quad =\sum_{\nu=1}^{n}\frac{\Gamma(n+\nu+2\alpha+1)\Gamma(n+\alpha+1)}
{2^{\nu}\nu!\,\Gamma(n-\nu+1)\Gamma(n+2\alpha+1)\Gamma(\nu+\alpha+1)}\sum_{s=0}^{\nu-1}(\theta-1)^s(r_1-1)^{\nu-s-1}\\
&\quad=\sum_{s=0}^{n-1}\sum_{\nu=s}^{n-1}\frac{\Gamma(n+\nu+2\alpha+2)\Gamma(n+\alpha+1)(\theta-1)^s(r_1-1)^{\nu-s}}
{2^{\nu+1}(\nu+1)!\,\Gamma(n-\nu)\Gamma(n+2\alpha+1)\Gamma(\nu+\alpha+2)}\\
&\quad=\frac{1}{2}\sum_{s=0}^{n-1}\sum_{l=0}^{n-1-s}\frac{\Gamma(n+s+l+2\alpha+2)\Gamma(n+\alpha+1)(\theta-1)^s(r_1-1)^{l}}
{2^{s+l}\Gamma(s+l+2)\Gamma(n-s-l)\Gamma(n+2\alpha+1)\Gamma(s+l+\alpha+2)}\\
&\quad=\frac{1}{2}\sum_{s=0}^{n-1}\sum_{l=0}^{n-1-s}\frac{\Gamma(n+s+l+2\alpha+2)\Gamma(n+\alpha+1)(-y^2/4)^s(-q_1^2/4)^l(1+o(1/n^2))}
{n^{2(s+l)}\Gamma(s+l+2)\Gamma(n-s-l)\Gamma(n+2\alpha+1)\Gamma(s+l+\alpha+2)}.
\end{align*}
Since
\[
\frac{\Gamma(n+a)}{\Gamma(n+b)}\sim n^{a-b},\quad {n+\alpha \choose n}\sim
\frac{n^{\alpha}}{\Gamma(\alpha+1)},\quad n\to\infty,
\]
then, for fixed $s$ and $l$,
\begin{align}
&\frac{\Gamma(\alpha+1)\Gamma(n+s+l+2\alpha+2)\Gamma(n+\alpha+1)(1+o(1/n^2))(-y^2/4)^s(-q_1^2/4)^l}{4n^{2(s+l+1)+\alpha}
\Gamma(n-s-l)\Gamma(n+2\alpha+1)\Gamma(s+l+2)\Gamma(s+l+\alpha+2)} \nonumber \\
&\quad \sim
\frac{\Gamma(\alpha+1)(-y^2/4)^s(-q_1^2/4)^l}{4\Gamma(s+l+2)\Gamma(s+l+\alpha+2)},\quad
n\to\infty, \label{zv22}
\end{align}
and, for $\theta=1-y^2/(2n^2)+o(1/n^2)$, we have, uniformly on $y\in [0,L]$,
\[
\lim_{n\to\infty} 2^{-1}n^{\alpha-2}\,
\frac{R_n^{(\alpha)}(\theta)}{\theta-r_1}=\frac{j_{\alpha}(y)}{q_1^2-y^2}.
\]
We should explain how we take the limit under the sum. Since for any $n\ge 1$,
$0\le s\le n-1$, $0\le l \le n-1-s$,
\[
\frac{\Gamma(n+s+l+2\alpha+2)}{\Gamma(n+2\alpha+1)}\le (2n+2\alpha)^{s+l+1},
\]
\[
\frac{\Gamma(n)}{\Gamma(n-s-l)}\le n^{s+l},\quad
\frac{\Gamma(n+\alpha+1)}{\Gamma(n)}\le C(\alpha)n^{\alpha+1},
\]
then (\ref{zv22}) can be bounded from above by 
\begin{align*}
&\frac{\Gamma(\alpha+1)\Gamma(n+s+l+2\alpha+2)\Gamma(n+\alpha+1)
|1+o(1/n^2)|(y^2/4)^s(q_1^2/4)^l}{4n^{2(s+l+1)+\alpha}
\Gamma(n-s-l)\Gamma(n+2\alpha+1)\Gamma(s+l+2)\Gamma(s+l+\alpha+2)}\\ &\quad \le
C_1(\alpha)\,\frac{(y^2/4)^s(q_1^2/4)^l}{\Gamma(s+l+2)\Gamma(s+l+\alpha+2)}.
\end{align*}
Moreover, the following series converges uniformly on any interval $[0,L]$, $q_1\le L$,
\begin{align*}
&\sum_{s=0}^{\infty}\sum_{l=0}^{\infty}\frac{(y^2/4)^s(q_1^2/4)^l}{\Gamma(s+l+2)\Gamma(s+l+\alpha+2)}\\
&\quad \le \sum_{s=0}^{\infty}\sum_{l=0}^{\infty}\frac{(L^2/4)^{s+l}}{\Gamma(s+l+2)\Gamma(s+l+\alpha+2)}\le
\sum_{s=0}^{\infty}\sum_{l=0}^{\infty}\frac{(L^2/4)^{s+l}}{(s+l+1)!}\\
&\quad =\sum_{m=0}^{\infty}(m+1)\,
\frac{(L^2/4)^{m}}{(m+1)!}=\sum_{m=0}^{\infty}\frac{(L^2/4)^{m}}{m!}=e^{L^2/4}.
\end{align*}
Thus, (\ref{eq52}) is proved.

Let
\[
x_i\in [0,\infty),\quad \frac{x_i}{n}\le 1,\quad
\theta_i=\sqrt{1-\Bigl(\frac{x_i}{n}\Bigr)^2},\quad i=1,\dots,N.
\]
For $i,j=1,\dots,N$, there holds, uniformly on $\varphi\in [0,\pi]$ and
for sufficiently large $n$,
\[
\sqrt{1-\Bigl(\frac{x_i}{n}\Bigr)^2}\sqrt{1-\Bigl(\frac{x_j}{n}\Bigr)^2}+
2\,\frac{x_ix_j}{n^2}\cos\varphi=1
-\frac{y_{ij}^2}{2n^2}+o\Bigl(\frac{1}{n^2}\Bigr),
\]
 where
\[
y_{ij}=\sqrt{x_i^2+x_j^2-2x_ix_j\cos\varphi}.
\]
Therefore, by \eqref{eq52} and the definitions of
the
generalized translation operator
\eqref{eq10} and
\eqref{eq50}, for any $i,j$,
\[
\lim\limits_{n\to\infty}(2n^2)^{-k}\tau^{\theta_i}p_{n-k}(\theta_j)=
T_{\alpha}^{x_i}g_k(x_j).
\]
Since the matrix $(\tau^{\theta_i}p_{n-k}(\theta_j))$ is positive semidefinite, then the matrix $(T_{\alpha}^{x_i}g_k(x_j))$
is also positive semidefinite. Then, by Levitan's theorem, $\mathcal{H}_{\alpha}(g_k)(t)\ge 0$ and the functions
$g_k$ and \eqref{def-gam} are positive definite.

\vspace{5mm}
{\bf{Acknowledgements.}}
We would like to thank  P.~B\'erard who pointed out the possibility to apply Liouville's method to show Theorem~\ref{prop-bess} and
E.~Berdysheva for helpful comments.}

\end{document}